\newcommand{\R}{\mathbb R}
\newcommand{\F}{\mathcal F}
\newcommand{\FF}{\mathbb F}
\newcommand{\K}{\mathcal K}
\newcommand{\pr}{\mathbf P}
\newcommand{\eps}{\varepsilon}
\newcommand*{\abs}[1]{\left\lvert#1\right\rvert}
\newcommand*{\set}[1]{\left\{#1\right\}}
\newcommand*{\norm}[1]{\left\lVert#1\right\rVert}
\newcommand{\ex}{\mathbf E}
\newcommand{\Beta}{\mathrm B}
\newcommand{\ind}{\mathds 1}
\DeclareMathOperator{\const}{const}
\begin{document}
\title*{Stochastic differential equations driven by  additive Volterra--L\'evy   and Volterra--Gaussian noises}
\titlerunning{SDEs with additive Volterra--L\'evy noise}
\author{Giulia Di Nunno, Yuliya Mishura and Kostiantyn Ralchenko}
\institute{Giulia Di Nunno \at
	Department of Mathematics, University of Oslo\\
	P.O. Box 1053 Blindern, N-0316 Oslo, Norway\\
	\email{giulian@math.uio.no}
	\and  Yuliya Mishura \at
	Department of Probability Theory, Statistics and Actuarial Mathematics,
	Taras Shevchenko National University of Kyiv,
	64, Volodymyrs'ka St.,
	01601 Kyiv, Ukraine\\
	\email{myus@univ.kiev.ua}
	\and  Kostiantyn Ralchenko \at
	Department of Probability Theory, Statistics and Actuarial Mathematics,
	Taras Shevchenko National University of Kyiv,
	64, Volodymyrs'ka St.,
	01601 Kyiv, Ukraine\\
	\email{k.ralchenko@gmail.com}
}

\maketitle

\abstract{We study the existence and uniqueness of solutions to stochastic differential equations with Volterra processes driven by L\'evy noise. For this purpose, we study in detail smoothness properties of these processes. Special attention is given to two kinds of Volterra--Gaussian processes that generalize the compact interval representation of fractional Brownian motion and to stochastic equations with such processes.}

\keywords{Volterra process, L\'evy process, Gaussian process, Sonine pair, continuity, H\"older property, weak solution, strong solution}

\section{Introduction}
The main object that is studied in the present paper  are stochastic differential equations with additive noise, admitting the form
\begin{equation}\label{main.object}
dX_t = u(X_t)dt+dY_t,\; t\ge 0,\; X|_{t=0}=X_0\in \R,
\end{equation}
where $u\colon\R\rightarrow \R$ is a measurable function, and $Y=\{Y_t, t\ge 0\}$ is a Volterra--L\'evy process. Equations of the form \eqref{main.object}, with different coefficients and different noises, were the subject of long and careful considerations.
Namely, the most popular case is the Langevin equation, where $u(x)=ax$, $x\in\R$, with some coefficient $a\neq 0$, and a Wiener process as a noise. Such process is called the Ornstein--Uhlenbeck process, or the Vasicek process, and it serves as mathematical model in many areas of science.
Initially the equation \eqref{main.object} was proposed as a model for velocity of particles in the theory of the Brownian motion in \cite{langevin1908}, then the corresponding mathematical theory was developed in \cite{uhlenbeck1930,wang1945}, see, e.\,g.\ the book \cite{VanKampen} for applications of the Ornstein--Uhlenbeck process in physics.
Since the seminal paper by Vasicek \cite{vasicek1977}, the Ornstein--Uhlenbeck process has become a very popular model in mathematical finance, see e.\,g.\ \cite{follmer1993,gibson1990,Mishura2015lmj,Mishura2015om,schobel1999, stein1991,SuWang,wiggins1987}.

A Volterra--L\'evy process has the form
$Y_t=\int_0^t g(t,s)\,dZ_s$,
where $g(t,s)$ is a given deterministic Volterra-type kernel, and $Z$ is a L\'evy process.
The conditions on $g$ and $Z$ supplying the existence of Volterra--L\'evy processes were studied in \cite{DiNunno16} together with a theory of pathwise stochastic integration with respect to such processes.
Some approximations and first numerical results can be found in \cite{New1}.
The goal of the present paper is to study stochastic differential equations with additive noise represented by a Volterra--L\'evy process.

We start with investigation of continuity and H\"older properties of Volterra--L\'evy processes.
In order to apply the Kolmogorov--Chentsov theorem, we establish moment upper bounds for increments of these processes.
In particular, we study in detail the case when the kernel $g$ satisfies certain power restrictions. Two examples of such kernels are considered, namely, the Molchan--Golosov kernel, which arises in the compact interval representation of fractional Brownian motion, and a sub-fractional kernel, which corresponds to sub-fractional Brownian motion.
For both kernels, it holds that sample paths of the corresponding Volterra--L\'evy processes satisfy H\"older condition up to order $H-\frac12$, where $H$ denotes the Hurst index. However, in the particular case of Gaussian $Z$, one has H\"older continuity up to order~$H$.
This agrees with the theory of fractional Brownian motion and with the paper \cite{Tikanmaki}, where the authors study the case, when $g(t,s)$ is the Molchan--Golosov kernel and $Z$ is a L\'evy process without Gaussian component.

Special attention in the paper is given to Volterra--Gaussian processes that arise in the case when L\'evy process $Z$ is a Brownian motion.
We investigate two types of kernels that generalize the Molchan--Golosov kernel of fractional Brownian motion. One of these kernels corresponds to fractional Brownian motion with Hurst index $H>\frac12$.
It was introduced in \cite{MSS}, where conditions for its existence and H\"older continuity were investigated.
Also, in \cite{MSS} the inverse representation of underlying Wiener process via Volterra--Gaussian process was studied. This study was based on the properties of Sonine pairs.
In the present paper we introduce also another type of Volterra--Gaussian process that extends fractional Brownian motion with $H<\frac12$.
We study smoothness of this process.
We also derive the inverse operators for both types of Volterra--Gaussian processes in terms of generalized fractional integrals and derivatives for Sonine pairs.

Then we apply the results mentioned above for investigation of stochastic differential equations with Volterra--L\'evy processes. We start with a deterministic analog of the equation \eqref{main.object}, where the stochastic term $Y_t$ is replaced by a non-random function that is locally integrable or locally bounded. We study solvability of this equation under Lipschitz condition on the drift coefficient $u$. Then we prove that the stochastic equation \eqref{main.object} with locally Lipschitz coefficient of linear growth has a unique solution under certain conditions on the underlying Volterra process $Z$ and power restrictions on the kernel $g(t,s)$.

We also study stochastic differential equations with two kinds of Volterra--Gaussian processes. In this case we can prove solvability of the equation under weaker assumptions on the drift coefficient.
Namely, we assume sublinear growth of this coefficient and its H\"older continuity.
We generalize the results of \cite{NO}, where the noise was fractional Brownian motion, to the case of more general Volterra--Gaussian noise. We prove the existence and uniqueness of a weak solution, the pathwise uniqueness of two weak solutions and the existence and uniqueness of a strong solution.

The paper is organized as follows.
In Section~\ref{sec:2} we recall the definition of a Volterra--L\'evy processes, necessary conditions for its existence, and a priory estimates for its moments.
Section~\ref{sec:3} is devoted to H\"older properties of Volterra--L\'evy processes. As auxiliary results, we establish upper bounds for the incremental moments in general case (Subsection \ref{sec:3.1}) as well as in the case of power restrictions on the kernel (Subsection \ref{sec:3.2}).
In Subsection~\ref{sec:3.3} we apply these bounds for investigation of continuity and H\"older properties of three types of Volterra--L\'evy processes.
Two examples of appropriate kernels are given in Subsection \ref{sec:3.4}.
In Subsection \ref{sec:3.5} two kinds of Volterra--Gaussian processes are studied.
Section \ref{sec:4} is devoted to the existence and uniqueness of solution to the equation \eqref{main.object}.
The stochastic differential equations with Volterra--Gaussian processes are studied in Section~\ref{sec:5}.
In Appendix we prove some auxiliary results related to fractional calculus for Sonine pairs.

Throughout the paper, we shall use notation $C$ for various constants whose value is not important and may change from line to line and even in the same line.

\section{Brief description of  Volterra--L\'evy processes}
\label{sec:2}

We start with a L\'evy process $Z$. In order to describe it, define
\[
\tau(z):=
\begin{cases}
z, & \abs{z}\le1,\\
\frac{z}{\abs{z}}, & \abs{z}>1.
\end{cases}
\]
Then the characteristic function of $Z_t$ can be represented in the following form (see, e.\,g., \cite{sato})
\[
\ex\exp\set{i\mu Z_t}
=\exp\set{t\Psi(\mu)},
\]
where \[
\Psi(\mu)=ib\mu-\frac{a\mu^2}{2}
+\int_{\R}\left(e^{i\mu x}-1-i\mu\tau(x)\right)\pi(dx),
\]
$b\in\R$,
$a\ge0$,
$\pi$ is a L\'evy measure on $\R$, that is a $\sigma$-finite Borel measure satisfying
$$\int_{\R}\left(x^2\wedge1\right)\pi(dx)<\infty,$$
with $\pi(\set{0})=0$.
The triplet $(a,b,\pi)$ is shortly called the {\it characteristic triplet of $Z$}.
Let us fix some $T>0$ and introduce  the following Volterra--L\'evy process
\begin{equation}\label{eq:volt-proc}
Y_t=\int_0^t g(t,s)\,dZ_s,
\quad t\in[0,T],
\end{equation}
where $g(t,s)$ is a given deterministic Volterra-type kernel.
The integral in \eqref{eq:volt-proc} is understood in the sense of \cite{RR} as   the limit in probability of elementary integrals.
Its construction is described in \cite[Thm.~2.2]{DiNunno16}.
According to \cite{DiNunno16}, in order to guarantee the existence  of the process $Y$ and of its moments, we need more strict assumptions on the here called \emph{base-process} $Z$ and the kernel $g(t,s)$.
More precisely, in what follows we assume that the Volterra--L\'evy process \eqref{eq:volt-proc} has $b = 0$ (i.\,e., $Z$ is a L\'evy process without drift), the measure $\pi$ is symmetric and
one of the following conditions holds:
\begin{enumerate}[label=\textbf{(A\arabic*)}]
\item\label{(A1)}
There exists   $p\in[1,2)$ such that  $g=g(t,\cdot)\in L_p([0,t])$ for any $t\in[0,T]$;
$a=0$ and  $\int_\R\abs{x}^p\,\pi(dx)<\infty$;
\item\label{(A2)} There exists  $p\ge2$ such that  $g=g(t,\cdot)\in L_p([0,t])$ for any $t\in[0,T]$ and  $\int_{\R}\abs{x}^p\pi(dx)<\infty$.
\end{enumerate}
 Then, according to \cite[Thm.~2.2]{DiNunno16}, the integral $\int_0^t g(t,s)\,dZ_s$ exists for any $t\in[0,T]$.
Moreover, in the case when condition \ref{(A1)} holds, we have  the following a priori estimate
\begin{equation}\label{eq:est1}
\ex\abs{\int_0^tg(t,s)\,dZ_s}^p
\le C\norm{g(t,\cdot)}_{L_p([0,t])}^p \int_{\R}|x|^p\pi(dx),
\end{equation}
and   in the case when condition \ref{(A2)} holds, we have the following a priori estimate
 \begin{equation}\label{eq:est2}
\ex\abs{\int_0^tg(t,s)\,dZ_s}^p
\le C\left(a^{p/2}\norm{g(t,\cdot)}_{L_2([0,t])}^p+\norm{g(t,\cdot)}_{L_p([0,t])}^p\int_{\R}\abs{x}^p\pi(dx)\right).
\end{equation}
The constant $C$ in \eqref{eq:est1} and \eqref{eq:est2} does not depend on the function $g$. However, it may depend on $p$ and $T$.

\section{Moment  upper bounds and H\"older properties of Volterra--L\'evy processes}
\label{sec:3}
In our approach, in order to consider a Volterra--L\'evy process as a noise, we need in the smoothness properties of its trajectories. So, the present section is devoted to its H\"older properties. Obviously, these properties depend both on the properties of the kernel $g$ and the L\'evy baseprocess $Z$.

\subsection{General  upper bounds for the incremental moments}
\label{sec:3.1}
In this subsection  we establish upper bounds for $\ex\abs{Y_t-Y_s}^p$ under   assumptions \ref{(A1)} and \ref{(A2)}.

\begin{lemma}\label{l:gen_bounds}
Consider  $0\le s\le t\le T$.

Let assumption \ref{(A1)} hold. Then
\begin{equation}\label{eq:gen_ineq0}
\ex\abs{Y_t-Y_s}^p
\le C \int_{\R}|x|^p\pi(dx) \left (\int_s^t\abs{g(t,u)}^p\,du
+ \int_0^s\abs{g(t,u)-g(s,u)}^p\,du \right).
\end{equation}

Let assumption \ref{(A2)} hold. Then
\begin{multline} \label{eq:gen_ineq}
\ex\abs{Y_t-Y_s}^p
\le C \int_{\R}|x|^p\pi(dx) \left (\int_s^t\abs{g(t,u)}^p\,du
+ \int_0^s\abs{g(t,u)-g(s,u)}^p\,du \right)\\
+C a^{p/2} \left (\left (\int_s^t\abs{g(t,u)}^2\,du\right)^{p/2}
+ \left(\int_0^s\abs{g(t,u)-g(s,u)}^2\,du\right)^{p/2} \right).
\end{multline}
\end{lemma}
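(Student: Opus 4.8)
The plan is to reduce everything to the already-established a priori moment bounds \eqref{eq:est1} and \eqref{eq:est2} by decomposing the increment $Y_t-Y_s$ into two stochastic integrals over the disjoint intervals $[0,s]$ and $[s,t]$. Using linearity of the integral, I would first write
\begin{equation*}
Y_t-Y_s
=\int_0^t g(t,u)\,dZ_u-\int_0^s g(s,u)\,dZ_u
=\int_0^s\bigl(g(t,u)-g(s,u)\bigr)\,dZ_u+\int_s^t g(t,u)\,dZ_u.
\end{equation*}
Since $p\ge1$ in both regimes, the elementary convexity inequality $\abs{a+b}^p\le 2^{p-1}\bigl(\abs{a}^p+\abs{b}^p\bigr)$ together with monotonicity of the expectation gives
\begin{equation*}
\ex\abs{Y_t-Y_s}^p
\le 2^{p-1}\left(\ex\abs{\int_0^s\bigl(g(t,u)-g(s,u)\bigr)\,dZ_u}^p
+\ex\abs{\int_s^t g(t,u)\,dZ_u}^p\right).
\end{equation*}

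The next step is to apply the a priori estimates to each of the two terms. The crucial observation is that \eqref{eq:est1} and \eqref{eq:est2} are in fact statements about an arbitrary deterministic integrand lying in the corresponding $L_p$ space, with a constant $C$ that does not depend on that integrand. Reading them this way, I apply them with the integrand $u\mapsto g(t,u)-g(s,u)$ on $[0,s]$ to estimate the first term, and with the integrand $u\mapsto g(t,u)\ind_{[s,t]}(u)$ on $[0,t]$ (equivalently, $g(t,\cdot)$ restricted to $[s,t]$) to estimate the second term. Under assumption \ref{(A1)} this produces precisely the two $L_p$-norm contributions appearing in \eqref{eq:gen_ineq0}; under assumption \ref{(A2)} it additionally yields the two $L_2$-norm contributions, each carrying the factor $a^{p/2}$, which gives \eqref{eq:gen_ineq}. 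Absorbing $2^{p-1}$ and all remaining numerical factors into $C$ then completes the argument.

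The only point requiring care is that the two auxiliary integrands are not literally of the form $g(t,\cdot)$, but merely deterministic functions supported on subintervals of $[0,T]$. This is harmless: the integrability hypotheses in \ref{(A1)} and \ref{(A2)} guarantee that both integrands belong to the relevant $L_p$ space, and the $C$ in \eqref{eq:est1}--\eqref{eq:est2} is uniform over integrands, so the bounds apply verbatim. I would also note that the independence of the two integrals, which follows from the independent increments of $Z$, is not needed here; the crude $\abs{a+b}^p$ bound already suffices, and independence would become relevant only if one wished to sharpen the constants.
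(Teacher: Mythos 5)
Your proposal is correct and follows essentially the same route as the paper: the same decomposition of $Y_t-Y_s$ into the integrals over $[s,t]$ and $[0,s]$, the convexity bound $\abs{a+b}^p\le 2^{p-1}(\abs{a}^p+\abs{b}^p)$, and then the a priori estimates \eqref{eq:est1}--\eqref{eq:est2} applied to each term with the constant uniform over integrands. Your extra remark that these estimates apply to arbitrary deterministic integrands (not just $g(t,\cdot)$ itself) makes explicit a point the paper leaves implicit, but the argument is the same.
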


\begin{proof}
Note that the increment of $Y$ is given by
\begin{align*}
Y_t-Y_s&=\int_0^tg(t,u)dZ_u-\int_0^sg(s,u)dZ_u
\notag\\
&=\int_s^tg(t,u)dZ_u+\int_0^s(g(t,u)-g(s,u))dZ_u.
\end{align*}
Therefore,
\begin{equation}\label{eq:increm}
\ex\abs{Y_t-Y_s}^p \le C\left(\ex\abs{\int_s^tg(t,u)dZ_u}^p
+ \ex\abs{\int_0^s(g(t,u)-g(s,u))dZ_u}^p\right).
\end{equation}
In order to conclude the proof, it suffices to apply the bounds \eqref{eq:est1} and \eqref{eq:est2} to the integrals in the right-hand side of \eqref{eq:increm}.
\end{proof}

We remark that the H\"older continuity of paths is a central property also e.\,g.\ in the rough-paths approach to the study of stochastic (partial) differential equations.
Our results can then find application in that framework.
We refer to e.\,g.\ \cite{New3} for a study of Volterra-driven stochastic differential equations with multiplicative noise via rough-paths.
Note that, different from our work, the starting base-process is H\"older continuous.

\subsection{Incremental moments and H\"older continuity under power restrictions  on the kernel $g$}\label{sec:3.2}
As one can see from the inequalities \eqref{eq:gen_ineq0} and \eqref{eq:gen_ineq}, the incremental moments of $Y$ are bounded by some integrals containing $g$, its powers and its increments.  Now let us consider more specific class of the kernels $g$.
Assume that the function $g$ satisfies the following power restrictions with some $p\ge1$.
\begin{enumerate}[label=\textbf{(B\arabic*)}]
\item\label{(B1)}
There exist constants $\alpha\in\R$, $\beta>-\frac1p$ and $\gamma>-\frac1p$ such that for all $0<u<t\le T$,
\[
\abs{g(t,u)} \le C t^\alpha u^\beta (t-u)^\gamma.
\]
\item\label{(B2)}
There exist a constant $\delta>0$ and a function $h(t,s,u)$
\[
\abs{g(t,u)-g(s,u)} \le \abs{t-s}^\delta h(t,s,u)
\quad\text{for all }0<u<s<t\le T,
\]
and
$\displaystyle \sup_{0<s<t\le T}\int_0^s \abs{h(t,s,u)}^p\,du < \infty$.
\end{enumerate}

As we shall see further on in the examples, these conditions on the kernel are well motivated by the fractional and sub-fractional Brownian motions.
An extension of condition \ref{(B1)} is provided in Remark~\ref{rem:generalization} at the end of the next subsection.

Our   goal in this and the next subsection is to obtain an inequality of the form
\[
\ex\abs{Y_t-Y_s}^p \le C \abs{t-s}^c
\]
with some $c>0$. In particular, if we get such an inequality with $c>1$, we will be able to apply the Kolmogorov continuity theorem and to investigate H\"older properties of $Y$.
Taking into account Lemma~\ref{l:gen_bounds}, we need to estimate the integrals of the form $\int_s^t\abs{g(t,u)}^p\,du$ and $\int_0^s\abs{g(t,u)-g(s,u)}^p\,du$.
Obviously, the second integral under the assumption \ref{(B2)} satisfies the inequality
\begin{equation}\label{eq:bound-int-dif}
\int_0^s\abs{g(t,u)-g(s,u)}^p\,du \le C\abs{t-s}^{\delta p}.
\end{equation}
The study of the first integral is more delicate.
We start with the following auxiliary result.

\begin{lemma}\label{lem_1}
Let $\mu>-1$ and $\nu>-1$.
Then for all $0\le s<t\le T$,
\begin{equation}\label{eq:bound1}
\int_s^t u^\mu (t-u)^\nu \,du \le C t^\mu (t-s)^{\nu+1}.
\end{equation}
 The positive constant $C$ in \eqref{eq:bound1} may depend on $\mu$, $\nu$ and $T$.
\end{lemma}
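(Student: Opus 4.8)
The plan is to remove all the inhomogeneity in $t$ by the substitution $u = tv$ (legitimate since $s<t$ forces $t>0$), which turns the left-hand side into $\int_s^t u^\mu(t-u)^\nu\,du = t^{\mu+\nu+1}\int_{r}^1 v^\mu(1-v)^\nu\,dv$ with $r:=s/t\in[0,1)$. Writing the target as $t^\mu(t-s)^{\nu+1}=t^{\mu+\nu+1}(1-r)^{\nu+1}$, the whole lemma becomes equivalent to the single scale-free inequality
\[
\int_r^1 v^\mu(1-v)^\nu\,dv \le C\,(1-r)^{\nu+1}, \qquad r\in[0,1),
\]
an incomplete Beta integral. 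For $r$ bounded away from $1$ both sides are comparable to positive constants (the integral is finite because $\mu>-1$ and $\nu>-1$), so the only real content is the correct order of decay as $r\to1$.

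To capture that decay I would split $[r,1]$ at its midpoint $m=\tfrac{1+r}2$. On $[m,1]$ one has $v\ge\tfrac12$, so $v^\mu$ is bounded (by $1$ if $\mu\ge0$, by $2^{-\mu}$ if $\mu<0$); that piece is then at most $C\int_m^1(1-v)^\nu\,du = C\,((1-r)/2)^{\nu+1}/(\nu+1)$, of the right order. On $[r,m]$ one has $(1-r)/2\le 1-v\le 1-r$, whence $(1-v)^\nu\le C(1-r)^\nu$ for either sign of $\nu$, so this piece reduces to proving $\int_r^m v^\mu\,dv\le C(1-r)$.

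The main obstacle is exactly this last bound when $-1<\mu<0$: there $v^\mu$ blows up near $0$, so one cannot replace it by a constant on the whole interval, and the crude estimate $\int_r^m v^\mu\,dv\le\frac1{\mu+1}$ is too weak as $r\to1$. I would settle it through the explicit primitive $\int_r^m v^\mu\,dv=\frac1{\mu+1}\bigl(m^{\mu+1}-r^{\mu+1}\bigr)$ together with a compactness argument: because $\mu+1>0$, the map $r\mapsto \bigl((\tfrac{1+r}2)^{\mu+1}-r^{\mu+1}\bigr)/(1-r)$ is continuous on $[0,1)$ and extends continuously to $r=1$ with limit $\tfrac{\mu+1}2$ (by L'Hôpital), hence is bounded on the compact $[0,1]$, giving $\int_r^m v^\mu\,dv\le C(1-r)$. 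If one prefers to avoid L'Hôpital, the same bound follows by treating $r\le\tfrac12$ and $r>\tfrac12$ separately, using the finiteness of $\int_0^1 v^\mu\,dv$ in the first range and $v^\mu\le 2^{-\mu}$ in the second. Combining the two halves yields the displayed Beta-integral estimate, and undoing $u=tv$ recovers \eqref{eq:bound1}; the resulting constant depends only on $\mu$ and $\nu$, consistent with the stated dependence.
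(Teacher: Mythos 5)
Your proof is correct, and while it shares the paper's central device --- splitting the domain of integration at the midpoint (your $m=\tfrac{1+r}{2}$ is exactly the image of the paper's $\tfrac{s+t}{2}$ under the substitution $u=tv$) and, on the half adjacent to the endpoint, bounding the power of $v$ (resp.\ $u$) by a constant and integrating $(1-v)^\nu$ exactly --- the execution of the other half is genuinely different. The paper keeps the variables $s,t$ throughout and, on $[s,\tfrac{s+t}{2}]$, factors $(t-u)^\nu=(t-u)^{\nu+1}(t-u)^{-1}\le(t-s)^{\nu+1}(t-u)^{-1}$ and then splits $\tfrac{1}{t-u}=\tfrac1t\bigl(1+\tfrac{u}{t-u}\bigr)$ into the two terms $I_{11}$ and $I_{12}$, the second of which is evaluated via a logarithm. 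You instead normalize first, reducing the lemma to the scale-free incomplete-Beta inequality $\int_r^1 v^\mu(1-v)^\nu\,dv\le C(1-r)^{\nu+1}$, and on $[r,m]$ you use the two-sided comparability $\tfrac{1-r}{2}\le 1-v\le 1-r$ to pull out $(1-r)^\nu$ directly, leaving only the elementary bound $\int_r^m v^\mu\,dv\le C(1-r)$, which you justify correctly (either through the continuous extension of $r\mapsto\bigl(m^{\mu+1}-r^{\mu+1}\bigr)/(1-r)$ to $[0,1]$, with limit $\tfrac{\mu+1}{2}$ at $r=1$, or by the two-case argument $r\le\tfrac12$ versus $r>\tfrac12$). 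Your route is somewhat cleaner: it avoids the $I_{11}/I_{12}$ decomposition and the logarithmic integral, makes the $T$-independence of the constant transparent, and isolates the only genuinely singular regime ($r\to1$, i.e.\ $s\to t$). Both arguments are complete and yield the same conclusion.
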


\begin{proof}
Write
\begin{equation} \label{eq:l1-1}
\int_s^t u^\mu (t-u)^\nu \,du
= \int_s^{\frac{s+t}{2}} u^\mu (t-u)^\nu \,du
+\int_{\frac{s+t}{2}}^t u^\mu (t-u)^\nu \,du
\eqqcolon I_1 + I_2.
\end{equation}
For $s\le u\le t$, we have
\[
(t-u)^\nu = (t-u)^{\nu+1}(t-u)^{-1}
\le (t-s)^{\nu+1} (t-u)^{-1}.
\]
Therefore,
\begin{align}
I_1&\le (t-s)^{\nu+1} \int_s^{\frac{s+t}{2}} \frac{u^\mu}{t-u} \,du
= (t-s)^{\nu+1} t^{-1} \int_s^{\frac{s+t}{2}} \frac{u^\mu(t-u+u)}{t-u} \,du
\notag\\
&= (t-s)^{\nu+1} t^{-1} \int_s^{\frac{s+t}{2}} u^\mu \,du
+ (t-s)^{\nu+1} t^{-1} \int_s^{\frac{s+t}{2}} \frac{u^{\mu+1}}{t-u} \,du
\eqqcolon I_{11}+I_{12}.
\label{eq:l1-2}
\end{align}
The term $I_{11}$ can be bounded as follows:
\begin{equation}\label{eq:l1-3}
I_{11} = C (t-s)^{\nu+1} t^{-1}
\left( \left(\frac{s+t}{2}\right)^{\mu+1}
- s^{\mu+1}\right )
\le C t^{\mu} (t-s)^{\nu+1},
\end{equation}
since
$\left(\frac{s+t}{2}\right)^{\mu+1} - s^{\mu+1}
\le \left(\frac{s+t}{2}\right)^{\mu+1} \le t^{\mu+1}$.

In order to bound $I_{12}$, we use the inequality
$u^{\mu+1} \le \left(\frac{s+t}{2}\right)^{\mu+1} \le t^{\mu+1}$.
We get
\begin{align}
I_{12} &\le (t-s)^{\nu+1} t^{\mu} \int_s^{\frac{s+t}{2}} \frac{du}{t-u}
=   (t-s)^{\nu+1} t^{\mu} \left ( \log (t-s) - \log \frac{t-s}{2}\right)
\notag\\
&=t^{\mu} (t-s)^{\nu+1} \log 2
= C t^{\mu} (t-s)^{\nu+1}.
\label{eq:l1-4}
\end{align}

Consider $I_2$.
Note that for $\frac{s+t}{2}<u<t$,
\begin{align*}
u^{\mu}&\le\left(\frac{s+t}{2}\right)^{\mu}\le \left(\frac{t}{2}\right)^{\mu}
& \text{if } \mu<0,
\\
u^{\mu}&\le t^{\mu}
& \text{if } \mu\ge 0.
\end{align*}
Hence, in both cases we have the bound $u^\mu\le C t^\mu$.
Therefore,
\begin{equation}\label{eq:l1-5}
I_2 \le C t^\mu\int_{\frac{s+t}{2}}^t  (t-u)^\nu \,du
= C t^\mu \left (t-\frac{s+t}{2}\right )^{\nu+1}
= C t^\mu (t-s)^{\nu+1}.
\end{equation}
Combining \eqref{eq:l1-1}--\eqref{eq:l1-5}, we get \eqref{eq:bound1}.
\end{proof}

Lemma \ref{lem_1} allows us to obtain an upper bound for the integral $\int_s^t\abs{g(t,u)}^p\,du$.
\begin{lemma}\label{l:bound-int-g}
Assume that condition \ref{(B1)} holds with some $p\ge1$.
Then for all $0\le s<t\le T$,
\[
\int_s^t \abs{g(t,u)}^p\,du \le C(t-s)^{\kappa p + 1},
\]
where
\begin{equation}\label{eq:eps}
\kappa = \kappa(\alpha,\beta,\gamma) =
\begin{cases}
\alpha + \beta + \gamma, & \text{if } \alpha + \beta < 0,
\\
\gamma, & \text{if } \alpha + \beta \ge 0.
\end{cases}
\end{equation}
The constant $C$ may depend on $\alpha$, $\beta$, $\gamma$, $p$ and $T$.
\end{lemma}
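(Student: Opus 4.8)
The plan is to reduce the integral of $\abs{g(t,u)}^p$ to the pure power integral handled by Lemma~\ref{lem_1}, and then to absorb the leftover prefactor in $t$ into the desired power of $(t-s)$ by a short case distinction. First I would insert the power bound from condition~\ref{(B1)}: raising it to the $p$-th power gives
\[
\int_s^t \abs{g(t,u)}^p\,du \le C\, t^{\alpha p}\int_s^t u^{\beta p}(t-u)^{\gamma p}\,du.
\]

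Next I would apply Lemma~\ref{lem_1} with $\mu = \beta p$ and $\nu = \gamma p$. The hypotheses $\mu>-1$ and $\nu>-1$ translate exactly into $\beta>-\frac1p$ and $\gamma>-\frac1p$, which are granted by~\ref{(B1)}; this is precisely the reason those lower bounds on $\beta,\gamma$ were imposed. Lemma~\ref{lem_1} then yields
\[
\int_s^t u^{\beta p}(t-u)^{\gamma p}\,du \le C\, t^{\beta p}(t-s)^{\gamma p+1},
\]
so that, collecting the powers of $t$,
\[
\int_s^t \abs{g(t,u)}^p\,du \le C\, t^{(\alpha+\beta)p}(t-s)^{\gamma p+1}.
\]

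The only remaining point — and the one dictating the two-line definition of $\kappa$ — is to control the factor $t^{(\alpha+\beta)p}$. If $\alpha+\beta\ge0$, the exponent is nonnegative and $t\le T$, so $t^{(\alpha+\beta)p}\le T^{(\alpha+\beta)p}=C$, leaving $(t-s)^{\gamma p+1}$ with $\kappa=\gamma$. If instead $\alpha+\beta<0$, the map $x\mapsto x^{(\alpha+\beta)p}$ is decreasing, and since $t\ge t-s$ we get $t^{(\alpha+\beta)p}\le (t-s)^{(\alpha+\beta)p}$; multiplying by $(t-s)^{\gamma p+1}$ produces $(t-s)^{(\alpha+\beta+\gamma)p+1}$, i.e.\ $\kappa=\alpha+\beta+\gamma$. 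Either way the claimed bound $(t-s)^{\kappa p+1}$ follows. I do not anticipate a genuine obstacle: the whole argument is a direct substitution into Lemma~\ref{lem_1} followed by this elementary case split, and the definition of $\kappa$ is exactly what renders the $t$-prefactor harmless.
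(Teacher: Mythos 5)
Your proposal is correct and follows essentially the same route as the paper: insert the bound from \ref{(B1)}, apply Lemma~\ref{lem_1} with $\mu=\beta p$, $\nu=\gamma p$, and then split on the sign of $\alpha+\beta$ to absorb $t^{(\alpha+\beta)p}$ into either a constant or $(t-s)^{(\alpha+\beta)p}$. No gaps.
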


\begin{proof}
According to condition \ref{(B1)},
\[
\int_s^t \abs{g(t,u)}^p\,du \le C t^{\alpha p} \int_s^t u^{\beta p} (t-u)^{\gamma p}\,du.
\]
Applying the upper bound \eqref{eq:bound1}, we get
\[
\int_s^t \abs{g(t,u)}^p\,du \le C t^{(\alpha + \beta) p} (t-s)^{\gamma p + 1}.
\]

If $\alpha + \beta < 0$, then $t^{(\alpha + \beta) p}\le (t-s)^{(\alpha + \beta) p}$, and we obtain the inequality
\[
\int_s^t \abs{g(t,u)}^p\,du \le C (t-s)^{(\alpha + \beta + \gamma) p + 1}.
\]

If $\alpha + \beta \ge 0$, then $t^{(\alpha + \beta) p}\le T^{(\alpha + \beta) p}$, hence,
\[
\int_s^t \abs{g(t,u)}^p\,du \le C (t-s)^{\gamma p + 1}.
\]
This concludes the proof.
\end{proof}

\subsection{Application of the upper bounds for the incremental moments to   Volterra--L\'evy processes of three types}
\label{sec:3.3}
Now, basing on Lemma \ref{l:bound-int-g}, we can better specify the upper bounds \eqref{eq:gen_ineq0} and \eqref{eq:gen_ineq} for the moments of increments of the Volterra--L\'evy process $Y$ satisfying \ref{(B1)}--\ref{(B2)}.
Also, as a consequence, we shall state its H\"{o}lder properties.
We consider three cases: 1) $Z$ is a L\'evy process without Brownian part; 2) $Z$ is a Brownian motion;
3) $Z$ is a L\'evy process of a general form.

\subsubsection{L\'evy--based process without Brownian part}
We start with the case of a L\'evy process in \eqref{eq:volt-proc} without Brownian part, that is, $a=0$.
\begin{lemma}\label{l:pure-jump}
Assume that $p\ge1$,
$a=0$, $\int_\R\abs{x}^p\,\pi(dx)<\infty$,
the conditions \ref{(B1)} and \ref{(B2)} hold with some $\alpha\in\R$, $\delta>0$, $\beta>-\frac1p$, $\gamma>-\frac1p$ and such that $\alpha+\beta+\gamma>-\frac1p$.
Then for all $0\le s<t\le T$,
\[
\ex\abs{Y_t-Y_s}^p \le C(t-s)^{\min\set{\kappa p + 1,\delta p}},
\]
where $\kappa$ is defined by \eqref{eq:eps}.
If $\kappa>0$ and $\delta>\frac1p$, then the trajectories of $Y$ are a.\,s.\ H\"older continuous up to order $\min\set{\kappa,\delta-\frac1p}$.
\end{lemma}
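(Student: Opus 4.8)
The plan is to combine the two auxiliary bounds already established in the excerpt with the general incremental estimate from Lemma~\ref{l:gen_bounds}. Since $a=0$, the relevant inequality is \eqref{eq:gen_ineq0}, which reduces the problem to bounding the two integrals $\int_s^t\abs{g(t,u)}^p\,du$ and $\int_0^s\abs{g(t,u)-g(s,u)}^p\,du$. The second integral is handled directly by the power restriction \ref{(B2)}, which yields the bound $C\abs{t-s}^{\delta p}$ as recorded in \eqref{eq:bound-int-dif}. The first integral is controlled by Lemma~\ref{l:bound-int-g}, which, under \ref{(B1)}, gives $C(t-s)^{\kappa p+1}$ with $\kappa$ defined by \eqref{eq:eps}.

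First I would substitute these two bounds into \eqref{eq:gen_ineq0}. The factor $\int_{\R}\abs{x}^p\,\pi(dx)$ is finite by hypothesis and can be absorbed into the generic constant $C$. This immediately produces
\[
\ex\abs{Y_t-Y_s}^p \le C\bigl((t-s)^{\kappa p+1}+(t-s)^{\delta p}\bigr).
\]
Since $0\le t-s\le T$ is bounded, the sum of two powers of $(t-s)$ is dominated (up to a constant depending on $T$) by the single power with the smaller exponent, giving $\ex\abs{Y_t-Y_s}^p\le C(t-s)^{\min\set{\kappa p+1,\delta p}}$, which is the first assertion. Here I should note that the hypothesis $\alpha+\beta+\gamma>-\frac1p$ guarantees $\kappa p+1>0$, so the exponent is genuinely positive and the bound is meaningful.

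For the H\"older statement, I would invoke the Kolmogorov--Chentsov continuity theorem, which requires the exponent $\min\set{\kappa p+1,\delta p}$ to exceed $1$. Under the stated assumptions $\kappa>0$ and $\delta>\frac1p$ we have $\kappa p+1>1$ and $\delta p>1$, so the minimum exceeds $1$ and the theorem applies. The guaranteed H\"older order is then $\frac{1}{p}\bigl(\min\set{\kappa p+1,\delta p}-1\bigr)=\min\set{\kappa,\delta-\frac1p}$, matching the claim. Strictly, Kolmogorov--Chentsov delivers H\"older continuity for every order strictly below this value; I would phrase the conclusion as H\"older continuity up to order $\min\set{\kappa,\delta-\frac1p}$ accordingly.

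The proof is essentially a bookkeeping assembly of results proved earlier, so no single step is a deep obstacle. The one point requiring care is the passage from the sum of two powers to the minimum exponent: one must use the boundedness of the interval $[0,T]$ (so that $(t-s)^{a}\le T^{a-b}(t-s)^{b}$ when $a\ge b$), and one must verify that the positivity conditions on $\kappa$, $\delta$ and $\alpha+\beta+\gamma$ are exactly what is needed both for Lemma~\ref{l:bound-int-g} to apply and for the Kolmogorov exponent to clear the threshold of $1$.
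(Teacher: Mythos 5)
Your proposal is correct and follows essentially the same route as the paper's own proof: apply \eqref{eq:gen_ineq0} from Lemma~\ref{l:gen_bounds}, bound the two integrals via Lemma~\ref{l:bound-int-g} and \eqref{eq:bound-int-dif}, reduce the sum of powers to the minimum exponent using the boundedness of $[0,T]$, and conclude with the Kolmogorov continuity theorem. Your additional remarks on why $\kappa p+1>0$ and on the exact Kolmogorov threshold are accurate and only make the bookkeeping more explicit than in the paper.
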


\begin{proof}
According to Lemma~\ref{l:gen_bounds}, we have
\[
\ex\abs{Y_t-Y_s}^p
\le C \left (\int_s^t\abs{g(t,u)}^p\,du
+ \int_0^s\abs{g(t,u)-g(s,u)}^p\,du \right).
\]
Applying Lemma~\ref{l:bound-int-g} and \eqref{eq:bound-int-dif}, we get
\begin{align*}
\ex\abs{Y_t-Y_s}^p
&\le C (t-s)^{\kappa p + 1} + C (t-s)^{\delta p}
\\
&\le C T^{\kappa p + 1} \left(\frac{t-s}{T}\right )^{\min\set{\kappa p + 1,\delta p}}
+ C T^{\delta p}\left(\frac{t-s}{T}\right )^{\min\set{\kappa p + 1,\delta p}}
\\
&\le C (t-s)^{\min\set{\kappa p + 1,\delta p}}.
\end{align*}
H\"older continuity follows from the Kolmogorov continuity theorem.
\end{proof}

\subsubsection{The Brownian case}
\begin{lemma}\label{l:Brownian}
Assume that $Z$ is a Brownian motion,
the conditions \ref{(B1)} and \ref{(B2)} hold with $p=2$, $\alpha\in\R$, $\beta>-\frac12$, $\gamma>-\frac12$ such that $\alpha+\beta+\gamma>-\frac12$.
Then for all $p\ge2$ and all $0\le s<t\le T$,
\[
\ex\abs{Y_t-Y_s}^p \le C(t-s)^{p \min\set{\kappa  + \frac12,\delta}},
\]
where $\kappa$ is defined by \eqref{eq:eps}.
If $\kappa>-\frac12$, then the trajectories of $Y$ are a.\,s.\ H\"older continuous up to order $\min\set{\kappa+\frac12,\delta}$.
\end{lemma}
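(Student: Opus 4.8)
The plan is to exploit the fact that when $Z$ is a Brownian motion the Volterra--L\'evy process $Y$ is Gaussian, so that all $p$-th moments of its increments are governed by the second moment. Concretely, since $Y_t-Y_s=\int_s^t g(t,u)\,dZ_u+\int_0^s(g(t,u)-g(s,u))\,dZ_u$ is a sum of two Wiener integrals, the increment $Y_t-Y_s$ is a centred Gaussian random variable, whose variance is finite by the $L^2$ bound furnished below. Hence, for every $p\ge2$ there is a constant $C_p$ (the $p$-th absolute moment of a standard normal) with
\[
\ex\abs{Y_t-Y_s}^p = C_p\left(\ex\abs{Y_t-Y_s}^2\right)^{p/2},
\]
so it suffices to estimate the variance and raise the result to the power $p/2$.

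First I would bound the second moment. Applying Lemma~\ref{l:gen_bounds} with $p=2$ under assumption~\ref{(A2)}, and noting that for a Brownian motion the L\'evy measure vanishes ($\pi\equiv0$), the jump term in \eqref{eq:gen_ineq} disappears and we are left with
\[
\ex\abs{Y_t-Y_s}^2 \le C\left(\int_s^t\abs{g(t,u)}^2\,du + \int_0^s\abs{g(t,u)-g(s,u)}^2\,du\right).
\]
To the first integral I would apply Lemma~\ref{l:bound-int-g} with $p=2$, which yields $\int_s^t\abs{g(t,u)}^2\,du\le C(t-s)^{2\kappa+1}$; to the second I would apply \eqref{eq:bound-int-dif} with $p=2$, giving $\int_0^s\abs{g(t,u)-g(s,u)}^2\,du\le C(t-s)^{2\delta}$. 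Since $t-s\le T$, the two contributions combine, exactly as in the proof of Lemma~\ref{l:pure-jump}, into
\[
\ex\abs{Y_t-Y_s}^2 \le C(t-s)^{\min\set{2\kappa+1,\,2\delta}} = C(t-s)^{2\min\set{\kappa+\frac12,\,\delta}}.
\]

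Substituting this into the Gaussian moment identity gives $\ex\abs{Y_t-Y_s}^p\le C(t-s)^{p\min\set{\kappa+\frac12,\,\delta}}$, which is the claimed bound. For the H\"older statement I would invoke the Kolmogorov--Chentsov theorem: the exponent equals $p\min\set{\kappa+\frac12,\delta}$, and because $\kappa>-\frac12$ and $\delta>0$ the base $\min\set{\kappa+\frac12,\delta}$ is strictly positive, so choosing $p$ large enough makes the exponent exceed~$1$. Kolmogorov's theorem then yields a modification that is H\"older continuous of any order below $\min\set{\kappa+\frac12,\delta}-\frac1p$, and letting $p\to\infty$ upgrades this to H\"older continuity up to order $\min\set{\kappa+\frac12,\delta}$. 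The only point requiring care---and the conceptual heart of why the Brownian case beats the pure-jump case of Lemma~\ref{l:pure-jump}---is the passage from the fixed $L^2$ estimate to all $L^p$: this is precisely where Gaussianity is indispensable, since for a general L\'evy base-process no such self-improvement of moments is available and one is confined to the single integrability exponent $p$.
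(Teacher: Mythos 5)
Your proof is correct and follows essentially the same route as the paper: the paper simply applies \eqref{eq:gen_ineq} for general $p\ge2$ with the jump terms absent (so only the $\bigl(\int|\cdot|^2\bigr)^{p/2}$ terms survive), which is exactly the bound you recover by combining the $L^2$ estimate with the Gaussian moment identity $\ex\abs{Y_t-Y_s}^p=C_p\bigl(\ex\abs{Y_t-Y_s}^2\bigr)^{p/2}$. The subsequent use of Lemma~\ref{l:bound-int-g}, \eqref{eq:bound-int-dif}, and the Kolmogorov theorem with $p\to\infty$ matches the paper's argument exactly.
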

\begin{proof}
In the Brownian case, \eqref{eq:gen_ineq} becomes
\[
\ex\abs{Y_t-Y_s}^p
\le C \left (\left (\int_s^t\abs{g(t,u)}^2\,du\right)^{p/2}
+ \left(\int_0^s\abs{g(t,u)-g(s,u)}^2\,du\right)^{p/2} \right).
\]
Then by Lemma~\ref{l:bound-int-g} and \eqref{eq:bound-int-dif}, we get
\[
\ex\abs{Y_t-Y_s}^p\le C (t-s)^{\frac{p}{2}(2 \kappa + 1)} + C (t-s)^{\delta p}
\le C(t-s)^{p \min\set{\kappa  + \frac12,\delta}}.
\]
By the Kolmogorov continuity theorem, if $p \min\set{\kappa  + \frac12,\delta}>1$, then the trajectories of $Y$ are a.\,s.\ H\"older up to order
$\min\set{\kappa  + \frac12,\delta}-\frac1p$. Since $p$ can be chosen arbitrarily large, we get H\"older continuity up to order $\min\set{\kappa  + \frac12,\delta}$, if $\kappa>-1/2$.
\end{proof}
\subsubsection{L\'evy--based process of a general form}
Now let us consider a L\'evy process $Z$ of a general form.
In this case we need to assume that $p\ge2$ in order to guarantee the existence of $Y$ and its moments, see \cite[Thm. 2.2]{DiNunno16}.
It turns out that under this assumption we have the same upper bound for the incremental moment as in the case $a=0$.
\begin{lemma}\label{l:general}
Assume that for some $p\ge2$ we have
$\int_\R\abs{x}^p\,\pi(dx)<\infty$ and
the conditions \ref{(B1)} and \ref{(B2)} hold with some $\alpha\in\R$, $\beta>-\frac1p$, $\gamma>-\frac1p$ such that $\alpha+\beta+\gamma>-\frac1p$.
Then for all $0\le s<t\le T$,
\[
\ex\abs{Y_t-Y_s}^p \le C(t-s)^{\min\set{\kappa p + 1,\delta p}},
\]
where $\kappa$ is defined by \eqref{eq:eps}.
If $\kappa>0$ and $\delta>\frac1p$, then the trajectories of $Y$ are a.\,s.\ H\"older continuous up to order $\min\set{\kappa,\delta-\frac1p}$.
\end{lemma}

\begin{proof}
Applying Lemma~\ref{l:gen_bounds}, Lemma~\ref{l:bound-int-g} and \eqref{eq:bound-int-dif}, we obtain
\begin{align*}
\ex\abs{Y_t-Y_s}^p
&\le C \left (\int_s^t\abs{g(t,u)}^p\,du
+ \int_0^s\abs{g(t,u)-g(s,u)}^p\,du \right.\\
&\quad + \left .\left (\int_s^t\abs{g(t,u)}^2\,du\right)^{p/2}
+ \left(\int_0^s\abs{g(t,u)-g(s,u)}^2\,du\right)^{p/2} \right)
\\
&\le C (t-s)^{\kappa p + 1} + C (t-s)^{\delta p} + C (t-s)^{\frac{p}{2}(\kappa p + 1)}
\\
&\le C (t-s)^{\min\set{\kappa p + 1,\delta p,\frac{p}{2}(\kappa p + 1)}}
= C (t-s)^{\min\set{\kappa p + 1,\delta p}}.
\end{align*}
H\"older continuity follows from the Kolmogorov continuity theorem.
\end{proof}

\begin{remark}\label{rem:generalization}
The assumption \ref{(B1)} can be replaced by the following more general condition:
\begin{enumerate}[label=\textbf{(B\arabic*$'$)}]
\item\label{(B1')}
There exist constants $\alpha_i\in\R$, $\beta_i>-\frac1p$ and $\gamma_i>-\frac1p$, $i=1,2,\dots,m$, such that for all $0<u<t\le T$,
\[
\abs{g(t,u)} \le C \sum_{i=1}^m t^{\alpha_i} u^{\beta_i} (t-u)^{\gamma_i}.
\]
\end{enumerate}
In this case the statements of Lemmas \ref{l:bound-int-g}--\ref{l:general} hold true
with $\kappa=\min\limits_{1\le i\le m} \kappa_i$,
where $\kappa_i=\kappa(\alpha_i,\beta_i,\gamma_i)$, $i=1,\dots,m$, are defined by
\eqref{eq:eps}.
Indeed, in order to proof Lemma~\ref{l:bound-int-g} under the assumption \ref{(B1')}, it suffices to apply the bound $(x_1+\dots+x_m)^p\le C\left(x_1^p+\dots+x_m^p\right)$ and follow the same reasoning as in the case of the condition \ref{(B1)}.
Other lemmas are then easily deduced from Lemma~\ref{l:bound-int-g}.
\end{remark}

\subsection{Examples of Volterra--L\'evy processes with power restrictions on the kernel}
\label{sec:3.4}

\subsubsection{The Molchan--Golosov kernel}

Let us verify the assumptions \ref{(B1)} and \ref{(B2)} for the Molchan--Golosov kernel, which is defined as
\begin{equation}\label{eq:MG-kernel}
K_H(t,s) = C_H s^{\frac12-H}\left (t^{H-\frac12}(t-s)^{H-\frac12} - (H-\tfrac12) \int_s^t u^{H-\frac32}(u-s)^{H-\frac12}\,du\right),
\end{equation}
where $H\in(0,1)$, \[
C_H = \left(\frac{2H \Gamma(H+\frac12) \Gamma(\frac32-H)}{\Gamma(2-2H)}\right)^{\frac12}.
\]
This kernel arises in the compact interval representation of the fractional Brownian motion as an integral with respect to a Wiener process $W$, see, e.\,g., \cite[Section 2.8]{Mishura_Zili}.
More precisely, the Volterra process
\begin{equation}\label{fBm}B^H_t=\int_0^t K_H(t,s)\,dW_s,\;  t\ge0\end{equation}  is a fractional Brownian motion with the Hurst parameter $H$, that is a zero mean Gaussian process with    covariance function
$$\ex B^H_tB^H_s=\frac12\left(s^{2H}+t^{2H}-|t-s|^{2H}\right).$$
Note that the precise value of $C_H$ is irrelevant in the context of our study, the following results concerning H\"older continuity of Volterra processes are valid for any $C>0$ instead of $C_H$.

Hereafter we consider the Volterra process
\begin{equation}\label{eq:Levy-fBm}
Y^H_t=\int_0^t K_H(t,s)\,dZ_s,\quad t\in[0,T],
\end{equation}
where $Z$ is a L\'evy base-process.
We recall that if $Z$ is without Gaussian component, then the process \eqref{eq:Levy-fBm} is known as \emph{fractional L\'evy process by Molchan--Golosov transformation}. It was introduced and studied in \cite{Tikanmaki}.

\begin{proposition}\label{p:fBm}
Let $H\in(0,1)$, $\eps\in(0,H)$.
\begin{enumerate}[1.]
\item
Let
$0<\int_\R x^2\,\pi(dx)<\infty$.
Then for all $0\le s<t\le T$,
\[
\ex\abs{Y^H_t-Y^H_s}^2 \le C(t-s)^{2(H-\eps)}.
\]
If $H\in(\frac12,1)$, then the trajectories of $Y^H$ are $\varkappa$-H\"older continuous for any $\varkappa\in(0,H-\frac12)$.

\item
Let $Z$ be a Brownian motion. Then
for all $p\ge2$ and all $0\le s<t\le T$,
\[
\ex\abs{Y^H_t-Y^H_s}^p \le C(t-s)^{p(H-\eps)},
\]
and the trajectories of $Y^H$ are $\varkappa$-H\"older continuous for any $\varkappa\in(0,H)$.
\end{enumerate}
\end{proposition}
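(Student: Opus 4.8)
The plan is to recognize $Y^H$ as a Volterra process whose kernel fits the power framework of Subsection~\ref{sec:3.3}, and to reduce both claims to two deterministic kernel estimates. Splitting the increment as in Lemma~\ref{l:gen_bounds},
\[
Y^H_t-Y^H_s=\int_s^t K_H(t,u)\,dZ_u+\int_0^s\bigl(K_H(t,u)-K_H(s,u)\bigr)\,dZ_u,
\]
everything comes down to bounding
\[
A(s,t):=\int_s^t K_H(t,u)^2\,du,\qquad B(s,t):=\int_0^s\abs{K_H(t,u)-K_H(s,u)}^2\,du .
\]
I would prove $A(s,t)\le C(t-s)^{2H}$ and $B(s,t)\le C(t-s)^{2(H-\eps)}$. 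Granting this, in case~1 condition~\ref{(A2)} holds with $p=2$, so Lemma~\ref{l:gen_bounds} gives $\ex\abs{Y^H_t-Y^H_s}^2\le C\bigl(A(s,t)+B(s,t)\bigr)\le C(t-s)^{2(H-\eps)}$; in the Gaussian case~2 the two stochastic integrals are independent centred Gaussians, so $Y^H_t-Y^H_s$ is Gaussian with variance $A(s,t)+B(s,t)$ and $\ex\abs{Y^H_t-Y^H_s}^p\le C_p\bigl(A(s,t)+B(s,t)\bigr)^{p/2}\le C(t-s)^{p(H-\eps)}$ for every $p\ge2$. The H\"older statements then follow from the Kolmogorov--Chentsov theorem: the exponent exceeds $1$ precisely when $H>\tfrac12$ in case~1 (yielding regularity up to $H-\tfrac12$), while letting $p\to\infty$ in case~2 yields regularity up to any order below $H$. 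The loss of $\eps$ is harmless since $(t-s)^{2H}\le T^{2\eps}(t-s)^{2(H-\eps)}$.

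The bound on $A(s,t)$ comes from verifying \ref{(B1)} for $K_H$ and invoking Lemma~\ref{l:bound-int-g}. For $H>\tfrac12$ an integration by parts rewrites \eqref{eq:MG-kernel} as $K_H(t,u)=C\,u^{\frac12-H}\int_u^t v^{H-\frac12}(v-u)^{H-\frac32}\,dv$; bounding $v^{H-\frac12}\le t^{H-\frac12}$ and integrating the now-integrable factor $(v-u)^{H-\frac32}$ gives $\abs{K_H(t,u)}\le C\,u^{\frac12-H}t^{H-\frac12}(t-u)^{H-\frac12}$, i.e.\ \ref{(B1)} with $(\alpha,\beta,\gamma)=(H-\tfrac12,\tfrac12-H,H-\tfrac12)$. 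For $H<\tfrac12$ this integration-by-parts form breaks down (its integrand is no longer integrable at $v=u$), so I would estimate the two terms of \eqref{eq:MG-kernel} directly, splitting $\int_u^t v^{H-\frac32}(v-u)^{H-\frac12}\,dv$ at $v=2u$ and using monotonicity of the factors on each piece; this produces the symmetric bound $\abs{K_H(t,u)}\le C\,t^{\frac12-H}u^{H-\frac12}(t-u)^{H-\frac12}$, i.e.\ \ref{(B1)} with $(\alpha,\beta,\gamma)=(\tfrac12-H,H-\tfrac12,H-\tfrac12)$. In either regime $\alpha+\beta=0$ and $\gamma=H-\tfrac12$, so \eqref{eq:eps} gives $\kappa=H-\tfrac12$, and Lemma~\ref{l:bound-int-g} yields $A(s,t)\le C(t-s)^{2\kappa+1}=C(t-s)^{2H}$.

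The delicate point, and the main obstacle, is the increment integral $B(s,t)$. Differentiating \eqref{eq:MG-kernel} gives, for all $H\in(0,1)$ and $0<u<s<t$,
\[
K_H(t,u)-K_H(s,u)=C\bigl(H-\tfrac12\bigr)\,u^{\frac12-H}\int_s^t v^{H-\frac12}(v-u)^{H-\frac32}\,dv .
\]
For $H<\tfrac12$ this leads to a genuine power estimate: bounding $v^{H-\frac12}\le s^{H-\frac12}$ and interpolating between $\int_s^t(v-u)^{H-\frac32}\,dv\le C(t-s)(s-u)^{H-\frac32}$ and $\int_s^t(v-u)^{H-\frac32}\,dv\le C(s-u)^{H-\frac12}$ gives, for any $\delta<H$, a factor $(t-s)^\delta$ times $h(t,s,u)=C\,u^{\frac12-H}s^{H-\frac12}(s-u)^{H-\frac12-\delta}$; a Beta-function computation then shows $\sup_{s,t}\int_0^s h^2\,du<\infty$ exactly because $\delta<H$ (and $H<1$), so \ref{(B2)} holds and \eqref{eq:bound-int-dif} gives $B(s,t)\le C(t-s)^{2(H-\eps)}$. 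For $H>\tfrac12$, however, the prefactor $u^{\frac12-H}$ is singular at $u=0$, and the same interpolation only produces an admissible $h$ for $\delta\le\tfrac12$, which is too weak to reach the target exponent. The clean way around this is to exploit that $K_H$ reproduces fractional Brownian motion: by \eqref{fBm} and its covariance, $A(s,t)+B(s,t)=\ex\abs{B^H_t-B^H_s}^2=(t-s)^{2H}$, whence $B(s,t)\le(t-s)^{2H}$ for every $H$. This identity, which settles the case $H>\tfrac12$ and in case~2 simply identifies $Y^H$ with $B^H$, is precisely the step at which the power-estimate machinery is insufficient and the special structure of the Molchan--Golosov kernel must be invoked; combining it with the bound on $A(s,t)$ and the reduction of the first paragraph completes both parts.
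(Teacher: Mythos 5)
Your proposal is correct, and for the most part it mirrors the paper's strategy: reduce everything to the deterministic integrals $A(s,t)=\int_s^t K_H(t,u)^2\,du$ and $B(s,t)=\int_0^s(K_H(t,u)-K_H(s,u))^2\,du$ via Lemma~\ref{l:gen_bounds}, verify \ref{(B1)} to control $A$ through Lemma~\ref{l:bound-int-g} with $\kappa=H-\frac12$, and bound $B$ by $C(t-s)^{2(H-\eps)}$. Where you genuinely diverge is the treatment of $B(s,t)$ for $H>\frac12$: you claim the power-estimate machinery cannot reach $\delta=H-\eps$ there and instead invoke the covariance identity $A(s,t)+B(s,t)=\ex\abs{B^H_t-B^H_s}^2=(t-s)^{2H}$. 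That identity is a legitimate, purely deterministic property of the Molchan--Golosov kernel, your use of it is valid, and it even yields the sharper bound $B(s,t)\le(t-s)^{2H}$ with no $\eps$-loss; but your diagnosis of the obstacle is inaccurate. The $\delta\le\frac12$ ceiling you hit comes from bounding $v^{H-\frac12}$ by a constant, which leaves the singular prefactor $u^{\frac12-H}$ intact in $h$. The paper stays entirely within the framework of \ref{(B2)} by first splitting $z^{H-\frac12}\le(z-u)^{H-\frac12}+u^{H-\frac12}$ inside the integral (see \eqref{eq:example-bound-dif}): one term cancels the prefactor $u^{\frac12-H}$ outright, the other merely worsens $(z-u)^{H-\frac32}$ to $(z-u)^{2H-2}$, and an $\eps$-interpolation of each piece then gives $\delta=H-\eps$ with an admissible $h$ for $p=2$. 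Both routes close the argument; yours trades the general kernel calculus for the special structure of $K_H$, which is harmless here but does not transfer to kernels that are only comparable to $K_H$ (the sub-fractional kernel of Subsection~\ref{sec:3.4} is handled in the paper precisely by reducing to these power estimates). Two minor points: your bound $\abs{K_H(t,u)}\le Ct^{\frac12-H}u^{H-\frac12}(t-u)^{H-\frac12}$ for $H<\frac12$ is correct and gives the same $\kappa=H-\frac12$ as the paper's two-term decomposition under \ref{(B1')}; and the degenerate case $H=\frac12$ (constant kernel) should at least be mentioned, though it is trivial.
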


\begin{proof}
We prove both statements simultaneously.
Without loss of generality,   assume that $0<\eps<\min\set{1-H,\frac12}$. Indeed, if the result of the proposition holds for some $\eps=\eps^*>0$, then it holds also for all $\eps>\eps^*$.
We consider the cases $H=\frac12$, $H>\frac12$ and $H<\frac12$ separately.

\emph{Case $H=\frac12$.}
Note that if $H=\frac12$, then $K_H\equiv \const$.
Hence, for any $p$, \ref{(B1)} and \ref{(B2)} are valid with $\alpha=\beta=\gamma=0$ and with any $\delta>0$.
If $\int_\R x^2\,\pi(dx)<\infty$, then, by Lemma~\ref{l:general},
\[
\ex\abs{Y^H_t-Y^H_s}^2 \le C(t-s)
\]
for all $0\le s<t\le T$.
If  $Z$ is a Brownian motion, then
by Lemma \ref{l:Brownian},
\[
\ex\abs{Y^H_t-Y^H_s}^p \le C(t-s)^{\frac{p}{2}}
\]
for all $0\le s<t\le T$ and $p\ge2$.
Hence, both statements of the proposition hold even for $\eps=0$ (consequently, they hold for any $\eps>0$).

\emph{Case $H\in(\frac12,1)$.}
In this case the kernel \eqref{eq:MG-kernel} can be rewritten using integration by parts in the following form:
\begin{equation}\label{eq:MG>1/2}
K_H(t,s) = C s^{\frac12-H}\int_s^t u^{H-\frac12}(u-s)^{H-\frac32}\,du.
\end{equation}
For $0<s<t\le T$, we have
\[
\abs{K_H(t,s)} \le C s^{\frac12-H}t^{H-\frac12}\int_s^t (u-s)^{H-\frac32}\,du
= C t^{H-\frac12}s^{\frac12-H}(t-s)^{H-\frac12}.
\]
Therefore the condition \ref{(B1)} holds with $\alpha=H-\frac12$, $\beta=\frac12-H$, $\gamma=H-\frac12$.

In order to verify the condition \ref{(B2)}, we need to estimate the difference $|K_H(t,u)-K_H(s,u)|$.
We have for $0<u<s<t\le T$,
\begin{align}
\abs{K_H(t,u)-K_H(s,u)} &= C u^{\frac12-H}\int_s^t z^{H-\frac12}(z-u)^{H-\frac32}\,dz
\notag\\
&\le C u^{\frac12-H}\int_s^t (z-u)^{2H-2}\,dz
+ C \int_s^t (z-u)^{H-\frac32}\,dz,
\label{eq:example-bound-dif}
\end{align}
(here we have used the inequality
$z^{H-\frac12} \le (z-u)^{H-\frac12} + u^{H-\frac12}$).
Let $\eps\in(0,1-H)$.
Then the integrals in the right-hand side of \eqref{eq:example-bound-dif} can be bounded as follows:
\begin{align*}
\int_s^t (z-u)^{2H-2}\,dz
&\le (s-u)^{H+\eps-1}\int_s^t (z-s)^{H-\eps-1}\,dz
\\
&= C (s-u)^{H+\eps-1}(t-s)^{H-\eps},
\end{align*}
\[
\int_s^t (z-u)^{H-\frac32}\,dz
\le (s-u)^{\eps-\frac12} \int_s^t (z-s)^{H-\eps-1}\,dz
= C (s-u)^{\eps-\frac12} (t-s)^{H-\eps}.
\]
Hence,
\[
\abs{K_H(t,u)-K_H(s,u)}
\le (t-s)^{H-\eps} h(s,u),
\]
where
\[
h(s,u)=C\left(u^{\frac12-H}(s-u)^{H+\eps-1}
+ (s-u)^{\eps-\frac12} \right).
\]
If
\begin{equation}\label{eq:p-ineq}
p<\frac1{H-\frac12}, \quad p<\frac1{1-H-\eps}, \quad\text{and}\quad p\le\frac1{\frac12-\eps},
\end{equation}
then
\begin{align*}
\int_0^s\abs{h(s,u)}^p\,du
&\le C\int_0^s \left (u^{(\frac12-H)p}(s-u)^{(H+\eps-1)p}
+ (s-u)^{(\eps-\frac12)p} \right)\,du
\\
&= Cs^{(\eps-\frac12)p+1}
\le CT^{(\eps-\frac12)p+1}<\infty.
\end{align*}
Thus, the condition \ref{(B2)} holds with $\delta = H-\eps$ for all $p$ satisfying
\eqref{eq:p-ineq} (in particular, for $p=2$).

According to Lemma~\ref{l:general}, if
$0<\int_\R x^2\,\pi(dx)<\infty$,
then for all $0\le s<t\le T$,
\[
\ex\abs{Y^H_t-Y^H_s}^2 \le C(t-s)^{2(H-\eps)},
\]
and the trajectories of $Y^H$ are $\varkappa$-H\"older continuous for any $\varkappa\in(0,H-\frac12)$.

If $Z$ is a Brownian motion, then, by Lemma~\ref{l:Brownian},
for all $p\ge2$ and all $0\le s<t\le T$,
\[
\ex\abs{Y^H_t-Y^H_s}^p \le C(t-s)^{p(H-\eps)},
\]
and the trajectories of $Y^H$ are $\varkappa$-H\"older continuous for any $\varkappa\in(0,H)$.

\emph{Case $H\in(0,\frac12)$.}
Denote
\[
K_H^{(1)}(t,s) = t^{H-\frac12} s^{\frac12-H}(t-s)^{H-\frac12},
\quad
K_H^{(2)}(t,s) = s^{\frac12-H}\int_s^t u^{H-\frac32}(u-s)^{H-\frac12}\,du.
\]
Then \eqref{eq:MG-kernel} implies that
\[
\abs{K_H(t,s)} \le C\left (K_H^{(1)}(t,s)+K_H^{(2)}(t,s)\right).
\]
According to Remark~\ref{rem:generalization}, we can treat $K_H^{(1)}(t,s)$ and $K_H^{(2)}(t,s)$ separately.
Evidently, the kernel $K_H^{(1)}(t,s)$ satisfies \ref{(B1)} with
$\alpha_1=H-\frac12$, $\beta_1=\frac12-H$, $\gamma_1=H-\frac12$.
Then $\kappa_1=\gamma_1=H-\frac12$, see \eqref{eq:eps}.

In order to bound $K_H^{(2)}(t,s)$, we make a substitution $z=\frac{u-s}s$ in the integral. We get
\begin{align*}
K_H^{(2)}(t,s) &= s^{H-\frac12}\int_0^{\frac{t-s}s} \frac{z^{H-\frac12}}{(1+z)^{\frac32-H}}\,dz
\le s^{H-\frac12}\int_0^{\infty} \frac{z^{H-\frac12}}{(1+z)^{\frac32-H}}\,dz
\\
&= \Beta\left(H+\tfrac12,1-2H\right) s^{H-\frac12}.
\end{align*}
Therefore, $K_H^{(2)}(t,s)$ satisfies \ref{(B1)} with
$\alpha_2=0$, $\beta_2=H-\frac12$, $\gamma_2=0$.
Consequently, $\kappa_2=\alpha_2+\beta_2+\gamma_2=H-\frac12=\kappa_1$.
Thus, $K_H (t,s)$ satisfies \ref{(B1')}, and the corresponding value of $\kappa$ equals $H-\frac12$.

Now let us verify the assumption \ref{(B2)}.
Let $0<u<s<t\le T$. We have
\begin{equation}\label{eq:deltaG}
\abs{K_H(t,u)-K_H(s,u)} \le C\abs{K_H^{(1)}(t,u)-K_H^{(1)}(s,u)} + C\abs{K_H^{(2)}(t,u)-K_H^{(2)}(s,u)}.
\end{equation}
The first term in the right-hand side can be decomposed as follows:
\begin{align}
\MoveEqLeft\abs{K_H^{(1)}(t,u)-K_H^{(1)}(s,u)}
=\abs{t^{H-\frac12} u^{\frac12-H}(t-u)^{H-\frac12}
- s^{H-\frac12} u^{\frac12-H}(s-u)^{H-\frac12}}
\notag\\
&\le u^{\frac12-H}(t-u)^{H-\frac12}\abs{t^{H-\frac12}-s^{H-\frac12}}
+ u^{\frac12-H} s^{H-\frac12} \abs{(t-u)^{H-\frac12}-(s-u)^{H-\frac12}}
\notag\\
&\eqqcolon K_H^{(1,1)}(t,s,u) + K_H^{(1,2)}(t,s,u).
\label{eq:delta-G1}
\end{align}
Let $\eps\in(0,\frac12)$.
For $K_H^{(1,1)}(t,s,u)$ we have
\begin{align}
K_H^{(1,1)}(t,s,u) &= C u^{\frac12-H} (t-u)^{H-\frac12} \int_s^t z^{H-\frac32}\,dz
\notag\\
&\le C u^{\frac12-H}(t-s)^{H-\frac12} s^{H-1+\eps} \int_s^t z^{-\frac12-\eps}\,dz
\notag\\
&= C u^{\frac12-H}(t-s)^{H-\frac12} s^{H-1+\eps} \left (t^{\frac12-\eps} - s^{\frac12-\eps}\right )
\notag\\
&\le C u^{\frac12-H} s^{H-1+\eps} (t-s)^{H-\eps}.
\label{eq:G11}
\end{align}
Similarly,
\begin{align*}
K_H^{(1,2)}(t,s,u) &= C u^{\frac12-H} s^{H-\frac12}  \int_s^t (z-u)^{H-\frac32}\,dz
\\
&\le C u^{\frac12-H} s^{H-\frac12} (s-u)^{\eps-\frac12} \int_s^t (z-s)^{H-1-\eps}\,dz,
\end{align*}
where we have used the inequality
\[
(z-u)^{H-\frac32} = (z-u)^{\eps-\frac12} (z-u)^{H-1-\eps}
\le (s-u)^{\eps-\frac12} (z-s)^{H-1-\eps}.
\]
Therefore,
\begin{equation}\label{eq:G12}
K_H^{(1,2)}(t,s,u)\le C u^{\frac12-H} s^{H-\frac12} (s-u)^{\eps-\frac12} (t-s)^{H-\eps}.
\end{equation}

Let us consider $\abs{K_H^{(2)}(t,u)-K_H^{(2)}(s,u)}$. We have
\[
\abs{K_H^{(2)}(t,u)-K_H^{(2)}(s,u)} = u^{\frac12-H}\int_s^t z^{H-\frac32}(z-u)^{H-\frac12}\,du.
\]
Using the bounds
$(z-u)^{H-\frac12}\le(s-u)^{H-\frac12}$
and
$
z^{H-\frac32}= z^{\eps-\frac12}z^{H-1-\eps}
\le  s^{\eps-\frac12}z^{H-1-\eps},
$
we obtain
\begin{align}
\abs{K_H^{(2)}(t,u)-K_H^{(2)}(s,u)} &\le u^{\frac12-H}s^{\eps-\frac12}(s-u)^{H-\frac12}\int_s^tz^{H-1-\eps}\,du
\notag\\
&= C u^{\frac12-H}s^{\eps-\frac12}(s-u)^{H-\frac12}\left(t^{H-\eps} - s^{H-\eps}\right)
\notag\\
&\le C u^{\frac12-H}s^{\eps-\frac12}(s-u)^{H-\frac12}(t-s)^{H-\eps}.
\label{eq:deltaG2}
\end{align}

Combining \eqref{eq:deltaG}--\eqref{eq:deltaG2}, we get
\[
\abs{K_H(t,u)-K_H(s,u)} \le (t-s)^{H-\eps}h(s,u),
\]
where
\[
h(s,u)=Cu^{\frac12-H}\left ( s^{H-1+\eps}+  s^{H-\frac12} (s-u)^{\eps-\frac12} + s^{\eps-\frac12}(s-u)^{H-\frac12}\right ).
\]
It is straightforward to check that if $p<\frac{1}{\frac12-\eps}$ and $p<\frac{1}{\frac12-H}$, then
\[
\int_0^s\abs{h(s,u)}^p\,du \le C s^{1-(\frac12-\eps)p}\le C T^{(\eps-\frac12)p+1}<\infty.
\]
This means, in particular, that the condition \ref{(B2)} is satisfied with $\delta=H-\eps$ and $p=2$.

According to Lemma~\ref{l:general}, if
$\int_\R x^2\,\pi(dx)<\infty$,
then for all $0\le s<t\le T$,
\[
\ex\abs{Y^H_t-Y^H_s}^2 \le C(t-s)^{2(H-\eps)}.
\]

 If $Z$ is a Brownian motion, then, by Lemma~\ref{l:Brownian},
for all $p\ge2$ and all $0\le s<t\le T$,
\[
\ex\abs{Y^H_t-Y^H_s}^p \le C(t-s)^{p(H-\eps)},
\]
and the trajectories of $Y^H$ are $\varkappa$-H\"older continuous for any $\varkappa\in(0,H)$.
\end{proof}

\begin{remark}
If $H<\frac12$ and $Z$ is a non-Gaussian L\'evy process, then the Kolmogorov--Chentsov theorem does not guarantee continuity of $Y^H$, since $2(H-\eps)<1$.
Moreover, if $Z$ is a L\'evy process without Gaussian component, then
according to \cite[Prop.~3.7]{Tikanmaki}, $Y^H$ has discontinuous sample paths with positive probability.
\end{remark}

\subsubsection{The sub-fractional kernel}

Let us consider another example for a kernel satisfying \ref{(B1)}--\ref{(B2)}, namely
\begin{equation}\label{eq:sfbm-kernel}
L_H(t,s) = C s^{\frac32-H}\left (t^{-1}\left (t^2-s^2\right )^{H-\frac12} +  \int_s^t z^{-2}\left (z^2-s^2\right )^{H-\frac12}\,dz\right),
\end{equation}
where $H\in(0,1)$, $C>0$.
This kernel arises in the compact interval representation of the sub-fractional Brownian motion \cite[Section 2.8]{Mishura_Zili} (see also \cite{Dzhaparidze04}).

Let us consider the Volterra process
\[
U^H_t=\int_0^t L_H(t,s)\,dZ_s,\quad t\in[0,T].
\]
It turns out that its properties are similar to those of the process $Y^H$ in \eqref{eq:Levy-fBm}.

\begin{proposition}
Let $H\in(0,1)$, $\eps\in(0,H)$.
\begin{enumerate}[1.]
\item
Let
$0<\int_\R x^2\,\pi(dx)<\infty$.
Then, for all $0\le s<t\le T$,
\[
\ex\abs{U^H_t-U^H_s}^2 \le C(t-s)^{2(H-\eps)}.
\]
If $H\in(\frac12,1)$, then the trajectories of $U^H$ are $\varkappa$-H\"older continuous for any $\varkappa\in(0,H-\frac12)$.

\item
Let $Z$ be a Brownian motion. Then
for all $p\ge2$ and all $0\le s<t\le T$,
\[
\ex\abs{U^H_t-U^H_s}^p \le C(t-s)^{p(H-\eps)},
\]
and the trajectories of $U^H$ are $\varkappa$-H\"older continuous for any $\varkappa\in(0,H)$.
\end{enumerate}
\end{proposition}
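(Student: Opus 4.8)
The plan is to follow the same route as in the proof of Proposition~\ref{p:fBm}: verify that the kernel $L_H$ satisfies the power restriction \ref{(B1')} and the increment condition \ref{(B2)} with exponents $\kappa = H-\frac12$ and $\delta = H-\eps$, and then read off the moment bounds and Hölder regularity from Lemma~\ref{l:general} (general $Z$ with finite second moment) and Lemma~\ref{l:Brownian} (Brownian $Z$). As before, I would assume without loss of generality that $\eps$ is small and treat $H=\frac12$, $H>\frac12$ and $H<\frac12$ separately. The case $H=\frac12$ is immediate: evaluating the inner integral gives $\int_s^t z^{-2}\,dz = s^{-1}-t^{-1}$, so $L_{1/2}(t,s) = Cs(t^{-1} + s^{-1} - t^{-1}) = C$ is constant, and both \ref{(B1)} and \ref{(B2)} hold trivially with $\alpha=\beta=\gamma=0$ and arbitrary $\delta>0$, exactly as for $K_{1/2}$.

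For $H \ne \frac12$ I would split $L_H(t,s) \le C(L_H^{(1)}(t,s) + L_H^{(2)}(t,s))$ into its two nonnegative summands, where $L_H^{(1)}(t,s) = s^{\frac32-H}t^{-1}(t^2-s^2)^{H-\frac12}$ and $L_H^{(2)}(t,s) = s^{\frac32-H}\int_s^t z^{-2}(z^2-s^2)^{H-\frac12}\,dz$, and treat them separately via Remark~\ref{rem:generalization}. The essential tool is the factorization $z^2 - s^2 = (z-s)(z+s)$, together with the two-sided comparison $z \le z+s \le 2z$ on $s < z \le t$, which lets me replace the factor $(z+s)^{H-\frac12}$ by a constant multiple of $z^{H-\frac12}$ (when $H>\frac12$) or of $s^{H-\frac12}$ (when $H<\frac12$). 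For $L_H^{(1)}$ this gives directly $L_H^{(1)}(t,s) \le C t^{H-\frac32}s^{\frac32-H}(t-s)^{H-\frac12}$, so that $\alpha_1 = H-\frac32$, $\beta_1 = \frac32-H$, $\gamma_1 = H-\frac12$ and hence $\kappa_1 = H-\frac12$. For $L_H^{(2)}$, in the regime $H>\frac12$ I would bound $(z-s)^{H-\frac12} \le (t-s)^{H-\frac12}$ and use $\int_s^t z^{H-\frac52}\,dz \le Cs^{H-\frac32}$, while for $H<\frac12$ I would substitute $z = s(1+w)$ and recognize the resulting integral as bounded by $\Beta(H+\frac12, \frac32-H)$, exactly as with $K_H^{(2)}$; in both regimes this yields $L_H^{(2)}(t,s) \le C(t-s)^{H-\frac12}$ or $Cs^{H-\frac12}$ respectively, so $\kappa_2 = H-\frac12 = \kappa_1$. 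Thus \ref{(B1')} holds with $\kappa = H-\frac12$.

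The main obstacle, as in Proposition~\ref{p:fBm}, is verifying \ref{(B2)}, i.e. estimating $|L_H(t,u)-L_H(s,u)|$ for $0<u<s<t$. I would decompose this increment along the two summands and, within each, split it into a telescoping sum of differences in which only one argument varies; each such difference is then written as an integral of a derivative ($t^{H-\frac32}-s^{H-\frac32} = C\int_s^t z^{H-\frac52}\,dz$, and analogously for the power of $t^2-s^2$ and for the integral term). The recurring device is to extract a factor $(t-s)^{H-\eps}$ by writing each exponent as a sum, e.g. $H-\frac52 = (\eps - \frac32) + (H-1-\eps)$ or $H-\frac12 = (\eps-\frac12)+(H-\eps)$, bounding the first part at the endpoint $z=s$ and integrating the second against $(z-s)^{H-1-\eps}$ to produce $(t-s)^{H-\eps}$. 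This yields $|L_H(t,u)-L_H(s,u)| \le (t-s)^{H-\eps}h(s,u)$ with $h(s,u)$ a finite sum of terms of the form $u^{\frac12-H}s^{a}(s-u)^{b}$; I would then check, choosing $\eps$ and $p$ so the singular exponents stay above $-\frac1p$, that $\int_0^s |h(s,u)|^p\,du \le CT^{(\eps-\frac12)p+1} < \infty$, which holds in particular for $p=2$. This establishes \ref{(B2)} with $\delta = H-\eps$.

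With \ref{(B1')} and \ref{(B2)} in hand, Lemma~\ref{l:general} gives $\ex|U^H_t-U^H_s|^2 \le C(t-s)^{2(H-\eps)}$ for finite-second-moment $Z$ and, when $H>\frac12$, Hölder continuity up to $\kappa = H-\frac12$; Lemma~\ref{l:Brownian} gives $\ex|U^H_t-U^H_s|^p \le C(t-s)^{p(H-\eps)}$ for all $p\ge2$ in the Brownian case, hence Hölder continuity up to $\kappa+\frac12 = H$. Since the exponent data $(\kappa,\delta) = (H-\frac12, H-\eps)$ coincide with those for the Molchan--Golosov kernel, the two propositions have identical conclusions, which explains the claimed similarity between $U^H$ and $Y^H$. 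I expect the increment estimate for the integral term $L_H^{(2)}$ to be the most delicate point, since there both the variation of the outer factor $s^{\frac32-H}$ and of the limits of integration must be accounted for simultaneously.
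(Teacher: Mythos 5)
Your proposal is correct and follows essentially the same route as the paper: verify \ref{(B1')} with $\kappa=H-\frac12$ and \ref{(B2)} with $\delta=H-\eps$ (splitting into $L_H^{(1)}+L_H^{(2)}$ and treating $H=\frac12$, $H>\frac12$, $H<\frac12$ separately), then invoke Lemmas~\ref{l:general} and~\ref{l:Brownian}. The only difference is organizational: where you redo the power-counting and the increment estimates from scratch via $z^2-u^2=(z-u)(z+u)$ and $z\le z+u\le 2z$, the paper uses that same factorization once to establish the pointwise dominations $L_H^{(i)}(t,s)\le C K_H^{(i)}(t,s)$ and $\abs{L_H^{(i)}(t,u)-L_H^{(i)}(s,u)}\le C\,\abs{K_H^{(i)}(t,u)-K_H^{(i)}(s,u)}$ (and similarly for the pieces $K_H^{(1,1)}$, $K_H^{(1,2)}$), and then simply recycles the bounds already proved for the Molchan--Golosov kernel in Proposition~\ref{p:fBm}.
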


\begin{proof}

\emph{Case $H=\frac12$.}
Observe that $L_H\equiv \const$ in this case, and the statement holds, see the proof of Proposition~\ref{p:fBm}.

\emph{Case $H\in(\frac12,1)$.}
 It is not hard to see that \eqref{eq:sfbm-kernel} can be written in the following form:
\[
L_H(t,s) = C s^{\frac32-H}\int_s^t \left (z^2-s^2\right )^{H-\frac32}\,dz.
\]
Then
\[
L_H(t,s) = C s^{\frac32-H}\int_s^t (z-s)^{H-\frac32}(z+s)^{H-\frac32}\,dz
\le C\int_s^t (z-s)^{H-\frac32}\,dz,
\]
because $(z+s)^{H-\frac32}\le s^{H-\frac32}$.
Therefore,
\[
L_H(t,s)\le C(t-s)^{H-\frac12},
\]
and \ref{(B1)} holds with $\alpha=\beta=0$ and $\gamma=H-\frac12$.

Let us verify \ref{(B2)}.
For $0<u<s<t\le T$ we have
\begin{align*}
\abs{L_H(t,u)-L_H(s,u)} &= C u^{\frac32-H} \int_s^t \left(z^2-u^2\right)^{H-\frac32}\,dz
\\
&= C u^{\frac32-H} \int_s^t (z-u)^{H-\frac32} (z+u)^{H-\frac32} \,dz.
\end{align*}
Using the bound
\[
(z+u)^{H-\frac32} = (z+u)^{-1} (z+u)^{H-\frac12}
\le u^{-1} (2z)^{H-\frac12},
\]
we get
\[
\abs{L_H(t,u)-L_H(s,u)} \le C u^{\frac12-H}\! \int_s^t (z-u)^{H-\frac32} z^{H-\frac12} \,dz
=C\abs{K_H(t,u)-K_H(s,u)},
\]
see \eqref{eq:example-bound-dif}.
Thus, the condition \ref{(B2)} holds with $\delta = H-\eps$ for all $p$ satisfying
\eqref{eq:p-ineq} (in particular, for $p=2$),
see the proof of Proposition~\ref{p:fBm}.
Similarly to the case of the Molchan--Golosov kernel, we can conclude that the proposition holds in the case $H>\frac12$.

\emph{Case $H\in(0,\frac12)$.}
It follows from \eqref{eq:sfbm-kernel} that
\[
\abs{L_H(t,s)} \le C\left (L_H^{(1)}(t,s)+L_H^{(2)}(t,s)\right).
\]
where
\[
L_H^{(1)}(t,s) = s^{\frac32-H}t^{-1}\left (t^2-s^2\right )^{H-\frac12},
\quad
L_H^{(2)}(t,s) = s^{\frac32-H}  \int_s^t z^{-2}\left (z^2-s^2\right )^{H-\frac12}\,dz.
\]
Applying the estimate
\begin{align}
\left (t^2-s^2\right )^{H-\frac12}
&=(t-s)^{H-\frac12}(t+s)^{H-\frac12}= (t-s)^{H-\frac12}(t+s)^{H+\frac12}(t+s)^{-1}
\notag\\
&\le (t-s)^{H-\frac12}(2t)^{H+\frac12}s^{-1}
= C t^{H+\frac12}s^{-1} (t-s)^{H-\frac12},
\label{eq:aux-bound}
\end{align}
we obtain
\begin{equation}\label{eq:F1}
L_H^{(1)}(t,s)
\le C s^{\frac12-H}t^{H-\frac12}(t-s)^{H-\frac12}
= C K_H^{(1)}(t,s).
\end{equation}
For the term $L_H^{(2)}(t,s)$ we also use \eqref{eq:aux-bound} and arrive at
\begin{equation}\label{eq:F2}
L_H^{(2)}(t,s) \le C s^{\frac12-H}  \int_s^t z^{H-\frac32} (z-s)^{H-\frac12}\,dz
=C  K_H^{(2)}(t,s).
\end{equation}
From the bounds \eqref{eq:F1} and \eqref{eq:F2} we deduce that the condition \ref{(B1')} is satisfied with the same constants $\alpha_i$, $\beta_i$, $\gamma_i$, $i=1,2$, as in the case of the kernel $K_H(t,s)$, see the proof of Proposition~\ref{p:fBm}.

Now we consider the difference
\[
\abs{L_H(t,u)-L_H(s,u)} \le C\abs{L_H^{(1)}(t,u)-L_H^{(1)}(s,u)} + C\abs{L_H^{(2)}(t,u)-L_H^{(2)}(s,u)},
\]
where $0<u<s<t\le T$.
For the first term in the right-hand side we have
\begin{align*}
\MoveEqLeft[1]
\abs{L_H^{(1)}(t,u)-L_H^{(1)}(s,u)}
= u^{\frac32-H}\abs{t^{-1}\left (t^2-u^2\right )^{H-\frac12}
- s^{-1}\left (s^2-u^2\right )^{H-\frac12}}
\\
&\le u^{\frac32-H}\left (t^2-u^2\right )^{H-\frac12} \abs{t^{-1}-s^{-1}}
+u^{\frac32-H}s^{-1}\abs{\left (t^2-u^2\right )^{H-\frac12}-\left (s^2-u^2\right )^{H-\frac12}}
\\
&\eqqcolon L_H^{(1,1)}(t,s,u) + L_H^{(1,2)}(t,s,u).
\end{align*}
Consider
\[
L_H^{(1,1)}(t,s,u) = C u^{\frac32-H}(t-u)^{H-\frac12}(t+u)^{H-\frac12} \int_s^t z^{-2}\,dz.
\]
Since
\[
(t+u)^{H-\frac12} \le u^{H-\frac12},
\quad
z^{-2} = z^{H-\frac32}z^{-H-\frac12} \le z^{H-\frac32}u^{-H-\frac12}
\]
we see that
\[
L_H^{(1,1)}(t,s,u) \le C u^{\frac12-H}(t-u)^{H-\frac12} \int_s^t z^{H-\frac32}\,dz
\le C K_H^{(1,1)}(t,s,u).
\]
by \eqref{eq:G11}.
The term $L_H^{(1,2)}(t,s,u)$ can be rewritten as follows:
\begin{align*}
L_H^{(1,2)}(t,s,u) &= C u^{\frac32-H}s^{-1}\int_{s^2}^{t^2}\left (z-u^2\right )^{H-\frac32}\,dz
\\
&= C u^{\frac32-H}s^{-1}\int_{s}^{t}\left (x^2-u^2\right )^{H-\frac32} x\,dx
\\
&= C u^{\frac32-H}s^{-1}\int_{s}^{t}(x-u)^{H-\frac12}(x+u)^{H-\frac32} x\,dx
\end{align*}
Using the bound,
\begin{align*}
(x+u)^{H-\frac32} x
&= (x+u)^{H-\frac32} x s^{H-\frac12} s^{\frac12-H}
\\
&\le (x+u)^{H-\frac32} (x+u) s^{H-\frac12} (x+u)^{\frac12-H}
= s^{H-\frac12}
\end{align*}
we obtain
\begin{align*}
L_H^{(1,2)}(t,s,u) &\le C u^{\frac32-H}s^{H-\frac32}\int_{s}^{t}(x-u)^{H-\frac12}\,dx
\\
&= C \frac{u}{s} \,K_H^{(1,2)}(t,s,u)
\le C K_H^{(1,2)}(t,s,u).
\end{align*}

Finally, applying the inequality \eqref{eq:aux-bound}, we get
\begin{multline*}
\abs{L_H^{(2)}(t,u)-L_H^{(2)}(s,u)} = u^{\frac32-H}  \int_s^t z^{-2}\left (z^2-u^2\right )^{H-\frac12}\,dz
\\
\le C u^{\frac12-H}  \int_s^t z^{H-\frac32} (z-u)^{H-\frac12}\,dz
= C \abs{K_H^{(2)}(t,u)-K_H^{(2)}(s,u)}.
\end{multline*}

Thus, we have established that
\begin{multline*}
\abs{L_H(t,u)-L_H(s,u)}
\le  C K_H^{(1,1)}(t,s,u) + C K_H^{(1,2)}(t,s,u)
\\
+ C \abs{K_H^{(2)}(t,u)-K_H^{(2)}(s,u)}.
\end{multline*}
The proof is concluded by applying the bounds and the arguments from the proof of  Proposition~\ref{p:fBm}.
\end{proof}

\subsection{Sonine pairs and  two kinds of Volterra--Gaussian processes}
\label{sec:3.5}

Hereafter we discuss some family of kernels providing in turn Volterra--Gaussian processes with good paths regularity.
The characterization of the kernels is based on the so-called Sonine pairs.
As a motivation, consider the compact interval representation \eqref{fBm} of the fractional Brownian motion, where the kernel is given by \eqref{eq:MG-kernel}.
We shall consider \eqref{eq:MG-kernel} in the two cases $H\in(\frac12,1)$ and $H\in(0,\frac12)$.
This will lead to different kind of considerations on the family of kernels.

\medskip

\textbf{(a)}
Let us consider first $H\in(\frac12,1)$.
In this case, the kernel $K_H$ can be simplified to
\begin{equation}\label{eq:A1}
K_H(t,s) = \left(H-\tfrac12\right) C_H s^{\frac12-H} \int_s^t u^{H-\frac12} (u-s)^{H-\frac32}\,du.
 \end{equation}
This leads us to consider the following Gaussian process
\begin{equation}\label{eq:gfbm}
Y_t = \int_0^t K(t,s) \,dW_s, \quad t\in[0,T],
\end{equation}
where $W=\set{W_t,t\in[0,T]}$ is a Wiener process, and the Volterra kernel $K(t,s)$ has the following form
\begin{equation}\label{eq:kernK}
K(t,s) = a(s) \int_s^t b(u) c(u-s)\,du.
\end{equation}
The functions $a,b,c\colon[0,T]\to\R$ are measurable and satisfy the following assumptions
\begin{enumerate}[label=\textbf{(C\arabic*)}]
\item\label{(C1)} Functions
$a\in L_p([0,T])$, $b\in L_q([0,T])$, and $c\in L_r([0,T])$
for $p\in[2,\infty]$, $q\in[1,\infty]$, $r\in[1,\infty]$
such that $1/p+1/q+1/r\le 3/2$.
\item\label{(C2)}
Functions $a$, $b$ are positive a.\,e.\ on $[0,T]$.
\item\label{(C3)}
Function $c$ creates a Sonine pair with some $h\in L_1([0,T])$.
\end{enumerate}
Recall the definition of Sonine pairs as given in \cite{MSS}.
\begin{definition}
The function $c$ creates a Sonine pair on the interval $[0,T]$ with some function $h\in L_1([0,T])$ if, for any $t\in[0,T]$,
\[
\int_0^t c(t-s) h(s)\, ds = 1.
\]
\end{definition}
It was established in \cite{MSS} that under the assumption \ref{(C1)},
\[
\sup_{t\in[0,T]} \norm{K(t,\cdot)}_{L_2([0,t])}<\infty.
\]
 This means that for any Wiener process $W=\set{W_t,t\in[0,T]}$, the process
\[
Y_t = \int_0^t K(t,s) \,dW_s, \quad t\in[0,T],
\]
is well defined, see \cite[Thm.~1]{MSS}.

\begin{remark}
If $H\in(\frac12,1)$, $a(s) = C s^{\frac12-H}$,
$b(s)=s^{H-\frac12}$,
$c(s) = s^{H-\frac32}$,
then $K(t,s)$ is the Molchan--Golosov kernel \eqref{eq:MG>1/2}, hence $Y$ is a fractional Brownian motion with the Hurst index $H$.
 Moreover, in this case the assumptions \ref{(C1)}--\ref{(C3)} are satisfied, see \cite{MSS}.  Therefore, the kernel $K$ is an analog of the kernel $K_H$ with $H>\frac12$. In this case $h(s)=s^{\frac12-H}.$
Other examples of Sonine pairs $(c,h)$ are given in \cite{MSS}.
\end{remark}

Let us consider the operator $\K$ associated with the kernel $K(t,s)$ in \eqref{eq:kernK}:
\begin{equation}\label{eq:E}
\K f(t) = \int_0^t K(t,s) f(s)\,ds
=\int_0^t a(s)\int_s^t b(u) c(u-s)\,du\, f(s)\,ds.
\end{equation}

In order to find an inverse operator to $\K$, let us apply the elements of ``fractional'' calculus related to Sonine pair $(c, h)$. More precisely, we use the notions similar to notions of the fractional integral and the fractional derivative, as given in Definition \ref{def:fracintfracder} from Appendix, see also \cite{MSS}.

In terms of the  fractional integral $I_{0+}^c$ from Definition \ref{def:fracintfracder}, the operator $\K$ can be rewritten   as follows:
\begin{equation}\label{eq:int}
\K f(t) = \int_0^t b(u)\int_0^s a(s)  c(u-s) f(s)\,ds\,du
=\int_0^t b(u) I_{0+}^c (af)(u)\,du.
\end{equation}

\begin{lemma}\label{lem6}
Consider the equation
\[
\K f(t) = \int_0^t a(s) \int_s^t b(u) c(u-s)\,du f(s)\,ds
=\int_0^t u(z)\,dz, \quad t\in[0,T].
\]
Then its solution has a form
\begin{equation}\label{eq:sol}
f(t) = a^{-1}(t) D_{0+}^h(u b^{-1})(t),
\end{equation}
under the assumption that the right-hand side of \eqref{eq:sol} is well-defined and $D_{0+}^h(u b^{-1})\in L_1([0,T])$.
Here $D_{0+}^h$ stands for fractional derivative, see Definition~\ref{def:fracintfracder}.
\end{lemma}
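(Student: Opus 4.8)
The plan is to reduce the integral equation to the inversion of the generalized fractional integral $I_{0+}^c$, which is precisely what the Sonine-pair derivative $D_{0+}^h$ accomplishes. I would start from the representation \eqref{eq:int}, which rewrites the operator as $\K f(t) = \int_0^t b(r)\, I_{0+}^c(af)(r)\,dr$ (I use $r$ for the integration variable to avoid clashing with the function $u$ appearing on the right-hand side). The equation then reads
\[
\int_0^t b(r)\, I_{0+}^c(af)(r)\,dr = \int_0^t u(z)\,dz, \quad t\in[0,T],
\]
so that the two sides are antiderivatives agreeing for every $t$. Since both integrands are locally integrable, differentiating in $t$ yields the pointwise (a.e.) identity $b(t)\, I_{0+}^c(af)(t) = u(t)$.

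Next, using that $b$ is positive a.e.\ by \ref{(C2)}, I would divide through to obtain $I_{0+}^c(af)(t) = (u b^{-1})(t)$. At this point the problem is purely to invert $I_{0+}^c$. The defining property of the Sonine pair, $\int_0^t c(t-s) h(s)\,ds = 1$, guarantees that $D_{0+}^h$ is a left inverse of $I_{0+}^c$; this identity $D_{0+}^h I_{0+}^c g = g$ is exactly what is recorded in Definition~\ref{def:fracintfracder} and established in the Appendix (see also \cite{MSS}). Applying $D_{0+}^h$ to both sides gives $af(t) = D_{0+}^h(u b^{-1})(t)$, and dividing by the a.e.-positive function $a$ (again by \ref{(C2)}) produces the claimed formula \eqref{eq:sol}.

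The main obstacle is not the algebra but the analytic justification of the two inversion steps. First, passing from equality of antiderivatives to equality of integrands requires the map $r\mapsto b(r)\, I_{0+}^c(af)(r)$ to be integrable, which follows under \ref{(C1)}: the Sonine-pair bounds from \cite{MSS} give $I_{0+}^c(af)\in L_2$ while $b\in L_q$, so the product is integrable by H\"older's inequality. Second, and more delicate, is verifying $D_{0+}^h I_{0+}^c = \mathrm{id}$ for the function class at hand; the key computation interchanges the order of integration in $\int_0^t h(t-s)\int_0^s c(s-\tau)(af)(\tau)\,d\tau\,ds$, collapses the resulting inner integral $\int_0^{t-\tau} c(v)h(t-\tau-v)\,dv$ to $1$ via the Sonine identity, and then differentiates. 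This is exactly where the hypothesis that $D_{0+}^h(u b^{-1})$ is well-defined and belongs to $L_1([0,T])$ is needed, so that the right-hand side of \eqref{eq:sol} is meaningful and the final differentiation is legitimate.
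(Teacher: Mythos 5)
Your proposal follows exactly the paper's own argument: rewrite $\K f$ via \eqref{eq:int} as $\int_0^t b(r)\,I_{0+}^c(af)(r)\,dr$, differentiate to get $b(t)\,I_{0+}^c(af)(t)=u(t)$ a.e., divide by the a.e.-positive $b$, and invert $I_{0+}^c$ by applying $D_{0+}^h$ via Lemma~\ref{lem8}\ref{l1-i} (the paper assumes $af\in L_1([0,T])$ for this step), then divide by $a$. The proof is correct and takes essentially the same approach, with your additional remarks on integrability and the Fubini/Sonine computation merely fleshing out what the paper delegates to the Appendix.
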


\begin{proof}
According to \eqref{eq:int},
\[
b(t) I_{0+}^c(af)(t) = u(t) \quad\text{a.\,e.}
\]
or
\begin{equation}\label{eq:**}
I_{0+}^c(af)(t) = b^{-1}(t)u(t).
\end{equation}
Assume that $af\in L_1([0,T])$ and apply Lemma~\ref{lem8}, item \ref{l1-i} to \eqref{eq:**}.
As a result, we arrive to
\[
af(t) = D_{0+}^h (b^{-1}u)(t),
\]
and the proof follows.
\end{proof}

As already mentioned, condition \ref{(C1)} is sufficient for the existence of process $Y$.
However, in order to guarantee its H\"older continuity, a stronger assumption is required.
The following proposition summarizes the results in Lemma~1 and Theorem~3 of
\cite{MSS}.

\begin{proposition}\label{prop:holder}
\begin{enumerate}[1.]
\item
Let the coefficients $a$, $b$, $c$ satisfy the assumption
\begin{enumerate}[label=\textbf{(C\arabic*)},start=4]
\item\label{(C4)}
$a\in L^p([0,T])$, $b\in L^q([0,T])$, $c\in L^r([0,T])$, where $p\ge 2$, $q,r\ge 1$,
$\frac1p+ \frac1r\le\frac12$, and $\frac1p+\frac1q+\frac1r\le 1+\varepsilon$ for some $\varepsilon\in (0,1/2)$.
\end{enumerate}
Then the stochastic process $Y$ has a modification satisfying H\"{o}lder condition up to order $\nu=\frac32-\frac1p-\frac1q-\frac1r>1/2-\varepsilon$.

\item
Let the coefficients $a$, $b$, $c$ satisfy the assumption
\begin{enumerate}[resume*]
\item\label{(C5)}
for any $t_1\ge0$, $t_2\ge0$, $t_1+t_2<T$,
\begin{gather*}
a\in L^p([0,T]) \cap L^{p_1}([t_1,T]), \text{ where } 2\le p\le p_1,
\\
b\in L^q([0,T]) \cap L^{q_1}([t_1+t_2,T]), \text{ where } 1< q\le q_1,
\\
c\in L^r([0,T]) \cap L^{r_1}([t_2,T]), \text{ where } 1\le r\le r_1,
\end{gather*}
$\frac1p+\frac1q+\frac1r\le\frac32$, and
$\frac{1}{q_1}+\max\left(\frac12,\frac1p+\frac{1}{r_1},\frac{1}{p_1}+\frac{1}{r}\right)<1$.
\end{enumerate}
Then the process $Y$ on any interval $[t_1+t_2,T]$ has a modification that satisfies
H\"older condition up to order
$\mu = \frac32 - \frac{1}{q_1} - \max\left(\frac12,\frac1p+\frac{1}{r_1},\frac{1}{p_1}+\frac{1}{r}\right)>1/2$.
\end{enumerate}
\end{proposition}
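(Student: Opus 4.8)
The plan is to use that $Y$ is a centered Gaussian process, so that its path regularity is controlled entirely by the incremental variance, and then to estimate that variance by a power of $\abs{t-s}$ by feeding conditions \ref{(C4)} (resp.\ \ref{(C5)}) into H\"older's inequality applied to the convolution structure of the kernel $K$.

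First I would observe that, since $W$ is a Wiener process and $K(t,\cdot)\in L_2([0,t])$ (which holds under \ref{(C4)} and \ref{(C5)}, both of which imply $\frac1p+\frac1q+\frac1r\le\frac32$ and hence the bound $\sup_t\norm{K(t,\cdot)}_{L_2([0,t])}<\infty$ recalled above), the process $Y$ is centered Gaussian and $Y_t-Y_s$ is a centered Gaussian random variable. Consequently there is a constant $c_p$ with $\ex\abs{Y_t-Y_s}^p=c_p\bigl(\ex\abs{Y_t-Y_s}^2\bigr)^{p/2}$ for every $p\ge1$. Writing $Y_t-Y_s=\int_0^t\varphi(u)\,dW_u$ with $\varphi(u)=K(t,u)$ for $u\in(s,t)$ and $\varphi(u)=K(t,u)-K(s,u)$ for $u\in(0,s)$, the It\^o isometry gives
\[
\ex\abs{Y_t-Y_s}^2
=\int_s^t K(t,u)^2\,du
+\int_0^s\abs{K(t,u)-K(s,u)}^2\,du,
\]
the same decomposition that underlies Lemma~\ref{l:gen_bounds}, now specialized to $p=2$ with no jump part. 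The problem is thereby reduced to proving that this quantity is at most $C\abs{t-s}^{2\nu}$ (resp.\ $C\abs{t-s}^{2\mu}$ on $[t_1+t_2,T]$).

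The heart of the argument is this variance estimate, carried out on each summand using the representations $K(t,u)=a(u)\int_u^t b(v)c(v-u)\,dv$ for $u<t$ and $K(t,u)-K(s,u)=a(u)\int_s^t b(v)c(v-u)\,dv$ for $u<s$. Both terms consist of $a(u)$ times an inner integral of $b$ against a shifted copy of the Sonine kernel $c$. To each inner integral I would apply H\"older's inequality with the exponents conjugate to those in $a\in L_p$, $b\in L_q$, $c\in L_r$, and then integrate in $u$ by H\"older once more; the constraint $\frac1p+\frac1r\le\frac12$ in \ref{(C4)} is precisely what makes the integrals involving the singular kernel $c$ converge, and the surplus powers of $(t-u)\le(t-s)$ and of $(t-s)$ collect into the exponent $2\nu$ with $\nu=\frac32-\frac1p-\frac1q-\frac1r$. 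For part~2, the same scheme is used, but the region where $u$ approaches $s$, at which the convolution $c(v-u)$ meets the singularity of $c$, is separated from the region bounded away from it, and the two pieces are estimated with the sharper local integrabilities $p_1,q_1,r_1$ on $[t_1,T]$, $[t_1+t_2,T]$, $[t_2,T]$; the side condition $\frac{1}{q_1}+\max(\frac12,\frac1p+\frac{1}{r_1},\frac{1}{p_1}+\frac1r)<1$ is exactly what renders each piece finite and yields the exponent $\mu=\frac32-\frac{1}{q_1}-\max(\frac12,\frac1p+\frac{1}{r_1},\frac{1}{p_1}+\frac1r)>\frac12$.

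Finally, inserting $\ex\abs{Y_t-Y_s}^2\le C\abs{t-s}^{2\nu}$ into the Gaussian moment identity gives $\ex\abs{Y_t-Y_s}^p\le C\abs{t-s}^{p\nu}$ for every $p\ge1$, so the Kolmogorov--Chentsov theorem produces a modification that is H\"older continuous of every order below $\nu-\frac1p$; letting $p\to\infty$ recovers H\"older continuity up to order $\nu$ (resp.\ $\mu$), exactly as in the proof of Lemma~\ref{l:Brownian}. I expect the main obstacle to be the bookkeeping in the variance estimate: because $b$ and the singular kernel $c$ are coupled through the convolution $c(v-u)$, the several applications of H\"older's inequality must be arranged so that the singularity stays integrable while the leftover powers assemble into exactly $\abs{t-s}^{2\nu}$, and for part~2 the additional care lies in the splitting near the singularity and in checking that the stated side condition is what simultaneously keeps each piece finite and delivers $\mu>\frac12$.
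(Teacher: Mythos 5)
You should first be aware that the paper does not prove this proposition at all: the text immediately preceding it states that it ``summarizes the results in Lemma~1 and Theorem~3 of \cite{MSS}'', so the paper's own ``proof'' is a citation. Your outer scaffolding is correct and matches the standard route (and the one the paper uses elsewhere, e.g.\ in Lemma~\ref{l:Brownian}): since $Y$ is centered Gaussian, $\ex\abs{Y_t-Y_s}^p=c_p\bigl(\ex\abs{Y_t-Y_s}^2\bigr)^{p/2}$, the It\^o isometry reduces everything to bounding $\int_s^t K(t,u)^2\,du+\int_0^s\abs{K(t,u)-K(s,u)}^2\,du$ by $C\abs{t-s}^{2\nu}$, and Kolmogorov--Chentsov with $p\to\infty$ then upgrades this to H\"older continuity up to order $\nu$.

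The genuine gap is that the variance estimate itself --- which is the entire mathematical content of the proposition --- is asserted rather than carried out. You say that H\"older's inequality with the conjugate exponents makes ``the surplus powers collect into the exponent $2\nu$'', but you never exhibit the order in which the three exponents are deployed, and this is not innocuous: for the term $\int_s^t a(u)^2\bigl(\int_u^t b(v)c(v-u)\,dv\bigr)^2du$ the naive application (H\"older in $v$ first, then in $u$ with exponent $p/2$) requires $2\bigl(1-\tfrac1q-\tfrac1r\bigr)\tfrac{p}{p-2}>-1$ for the resulting power of $(t-u)$ to be integrable, which is not implied by \ref{(C4)} when $p$ is close to $2$; one has to interchange the order of the H\"older applications or split the region, and it is precisely here that the separate condition $\tfrac1p+\tfrac1r\le\tfrac12$ must be invoked --- you mention it only in passing. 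For part~2 the situation is worse: you do not explain how the three different localizations $a\in L^{p_1}([t_1,T])$, $b\in L^{q_1}([t_1+t_2,T])$, $c\in L^{r_1}([t_2,T])$ interact with the arguments $u$, $v$, $v-u$ of the integrand on the region $s,t\ge t_1+t_2$, nor how the maximum of the three quantities $\tfrac12$, $\tfrac1p+\tfrac1{r_1}$, $\tfrac1{p_1}+\tfrac1r$ arises from the splitting; the phrase ``is exactly what renders each piece finite'' is a restatement of the conclusion, not a derivation. As it stands the proposal is a correct reduction plus a plan for the hard part, not a proof of it.
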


In \cite {MSS} details about the value of these conditions for fractional Brownian motion are given.
Briefly, condition \ref{(C4)} supplies its H\"{o}lder property up to order $1/2$, and condition \ref{(C5)} supplies H\"{o}lder property up to order $H$ on any interval separated from zero.

\medskip

\textbf{(b)}
In the present paper, we consider the kernel \eqref{eq:MG-kernel} with Hurst index $H\in(0, 1/2)$.
Then we introduce its generalization in the form
\begin{equation}\label{eq:genker}
\widehat K(t,s) = \hat a(s)\left[\hat b(t) \hat c (t-s)
- \int_s^t \hat b'(u) \hat c(u-s)\,du \right],
\end{equation}
where   $\hat a, \hat b, \hat c  \colon[0,T]\to\R$ are measurable functions. In what follows, we assume that the following conditions hold.
\begin{enumerate}[label={\bf ($\mathbf{\widehat C \arabic*}$)}]
\item\label{(hatC1)}
The function $\hat a$ is nondecreasing, $\hat b$ is absolutely continuous, $\hat a \hat b$ is bounded, $\hat c \in L_2([0,T])$, and
\[
A(T)\coloneqq \int_0^T\!\!\int_0^T \abs{\hat b'(u)} \abs{\hat b'(z)}
\int_0^{u\wedge z}\! \hat a^2 (s) \abs{\hat c(u-s)} \abs{\hat c(z-s)} \,ds\,du\,dz<\infty.
\]
\item\label{(hatC2)}
Functions $\hat{a}$, $\hat{b}$ are positive a.\,e.\ on $[0,T]$.
\item\label{(hatC3)}
Function $\hat{c}$ creates a Sonine pair with some $\hat{h}\in L_1([0,T])$.
\end{enumerate}

\begin{remark}
Sufficient condition for \ref{(hatC1)} is
\begin{enumerate}[label={\bf ($\mathbf{\widehat C \arabic*'}$)}]
\item\label{(hatC1')}
$\displaystyle \int_0^T \abs{\hat b'(u)} \left(
\int_0^{u} \hat a^2 (z) \, \hat c^2(u-z) \,dz\right)^{\frac12}du<\infty$.
\end{enumerate}
Indeed, under \ref{(hatC1')}
\begin{align*}
\MoveEqLeft
\int_0^T\!\!\int_0^T \abs{\hat b'(u)} \abs{\hat b'(z)}
\int_0^{u\wedge z} \hat a^2 (s) \abs{\hat c(u-s)} \abs{\hat c(z-s)} \,ds\,du\,dz
\\
&\le \int_0^T\!\!\int_0^T \abs{\hat b'(u)} \abs{\hat b'(z)}
\left(\int_0^{u\wedge z} \hat a^2 (s) \abs{\hat c(u-s)}^2\,ds\right)^{\frac12}
\\
&\quad\times
\left(\int_0^{u\wedge z} \hat a^2 (s) \abs{\hat c(z-s)}^2\,ds\right)^{\frac12}
du\,dz
\\
&\le \left(\int_0^T \abs{\hat b'(u)}\left(\int_0^u \hat a^2 (s)
\abs{\hat c(u-s)}^2\,ds\right)^{\frac12} du \right)^2<\infty.
\end{align*}
\end{remark}

\begin{remark}
We observe that in the case of fractional Brownian motion with $H<1/2$ it holds that $\hat a(s) = Cs^{\frac12-H}$,
$\hat b(s) = \hat c(s) =  s^{H-\frac12}$ and $\hat h(s) = s^{-\frac12-H}$, so, these functions indeed satisfy conditions  \ref{(hatC1)}--\ref{(hatC3)}.
Indeed, from the remark above we can see that
\begin{align*}
\MoveEqLeft
\int_0^T \abs{\hat b'(u)} \left(\int_0^{u} \hat a^2 (s) \, \abs{\hat c(u-s)}^2 \,ds\right)^{\frac12}du
\\
&= C\left(\tfrac12-H\right) \int_0^T u^{H-\frac32}
\left(\int_0^{u} s^{1-2H} (u-s)^{2H-1} \,ds\right)^{\frac12}du
\\
&= C\left(\tfrac12-H\right) \int_0^T u^{H-1}\,du < \infty.
\end{align*}
\end{remark}

\begin{lemma}
Under assumption \ref{(hatC1)} we have that
\[
\sup_{t\in[0,T]} \norm{\widehat K(t,\cdot)}_{L_2([0,t])} < \infty,
\]
and for any Wiener process $W = \set{W_t, t\in[0,T]}$ a process
\[
\widehat{Y}_t = \int_0^t \widehat{K}(t,s)\,dW_s,
\quad t\in[0,T],
\]
is well defined.
\end{lemma}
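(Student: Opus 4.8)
The plan is to bound $\int_0^t \widehat K(t,s)^2\,ds$ uniformly in $t\in[0,T]$, since finiteness of this supremum is precisely the asserted estimate $\sup_t\norm{\widehat K(t,\cdot)}_{L_2([0,t])}^2<\infty$. Writing $\widehat K(t,s)$ as the difference of the two terms in \eqref{eq:genker} and using $(x-y)^2\le 2x^2+2y^2$, I would split
\[
\int_0^t \widehat K(t,s)^2\,ds \le 2 J_1(t) + 2 J_2(t),
\]
where
\[
J_1(t)=\hat b(t)^2\int_0^t \hat a^2(s)\,\hat c(t-s)^2\,ds,
\qquad
J_2(t)=\int_0^t \hat a^2(s)\left(\int_s^t \hat b'(u)\,\hat c(u-s)\,du\right)^2 ds,
\]
and then estimate each piece using the hypotheses collected in \ref{(hatC1)}.

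The term $J_1$ is routine. Since $\hat a$ is nondecreasing and positive by \ref{(hatC2)}, we have $\hat a^2(s)\le \hat a^2(t)$ for $s\le t$, so after the substitution $r=t-s$,
\[
J_1(t) \le \hat a(t)^2\hat b(t)^2\int_0^t \hat c(t-s)^2\,ds = [\hat a(t)\hat b(t)]^2\int_0^t \hat c(r)^2\,dr \le \norm{\hat a\hat b}_\infty^2\,\norm{\hat c}_{L_2([0,T])}^2,
\]
which is finite and independent of $t$ because $\hat a\hat b$ is bounded and $\hat c\in L_2([0,T])$ by \ref{(hatC1)}.

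The key step is $J_2$, and this is exactly where the quantity $A(T)$ enters. Expanding the inner square as a double integral over $(u,z)\in[s,t]^2$ and taking absolute values gives
\[
J_2(t)\le \int_0^t \hat a^2(s)\int_s^t\!\!\int_s^t \abs{\hat b'(u)}\,\abs{\hat b'(z)}\,\abs{\hat c(u-s)}\,\abs{\hat c(z-s)}\,du\,dz\,ds.
\]
The main technical point is to interchange the order of integration by Tonelli, which is legitimate since the integrand is nonnegative: the region $\set{0\le s\le t,\; s\le u\le t,\; s\le z\le t}$ coincides with $\set{0\le u,z\le t,\; 0\le s\le u\wedge z}$, so pulling the $s$-integral inside, and then enlarging the outer domain from $[0,t]^2$ to $[0,T]^2$, produces precisely the integrand defining $A(T)$. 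Hence $J_2(t)\le A(T)<\infty$, and combining with the bound on $J_1$ yields $\sup_{t\in[0,T]}\int_0^t\widehat K(t,s)^2\,ds \le 2\norm{\hat a\hat b}_\infty^2\,\norm{\hat c}_{L_2([0,T])}^2+2A(T)<\infty$.

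Finally, well-definedness of $\widehat Y_t$ for each fixed $t$ is immediate from what precedes: the integrator is a Wiener process, so $\int_0^t\widehat K(t,s)\,dW_s$ is a well-defined Wiener integral precisely because $\widehat K(t,\cdot)\in L_2([0,t])$, which we have just shown. Equivalently, this is the Brownian instance of condition \ref{(A2)} with $p=2$ and $\pi\equiv 0$, so existence of the integral also follows directly from \cite[Thm.~2.2]{DiNunno16}. I expect the only genuinely delicate point to be the Tonelli interchange in the estimate for $J_2$, since one must identify the domain of integration correctly to recover exactly the expression $A(T)$ from \ref{(hatC1)}; everything else reduces to the monotonicity of $\hat a$ and the boundedness of $\hat a\hat b$.
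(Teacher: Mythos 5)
Your proof is correct and follows essentially the same route as the paper: split the square of the kernel into the two terms, bound the first via monotonicity of $\hat a$, boundedness of $\hat a\hat b$ and $\hat c\in L_2([0,T])$, and reduce the second to $A(T)$ by expanding the square and applying Tonelli over the region $\set{0\le s\le u\wedge z}$. (The paper's displayed computation writes $\hat b$ where $\hat b'$ is meant in the second term; your version with $\abs{\hat b'}$ is the consistent one, matching the definition of $A(T)$.)
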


\begin{proof}
Obviously,
\begin{align*}
\norm{\widehat K (t, \cdot)}_{L_2([0,t])}^2
&\le C\hat b^2(t) \int_0^t \hat a^2(s) \hat c^2(t-s)\,ds
\\
&\quad+ C \int_0^t \hat a^2(s) \left(\int_s^t\hat b(u) \hat c(u-s)\,du\right)^2 ds.
\end{align*}
If $\hat a \hat b$ is bounded, $\hat a$ is nondecreasing, and
$\hat c \in L_2([0,T])$, then
\begin{align*}
\hat b^2(t) \int_0^t \hat a^2(s) \hat c^2(t-s)\,ds
&\le \hat b^2(t) \hat a^2(t) \int_0^t \hat c^2(t-s)\,ds
\\
&\le \bigl(\hat a\hat b\bigr)^2(t) \norm{\hat c}^2_{L_2([0,T])}
<\infty.
\end{align*}
Furthermore,
\begin{align*}
\MoveEqLeft
\int_0^t \hat a^2(s) \left(\int_s^t\hat b(u) \hat c(u-s)\,du\right)^2 ds
\\
&\le \int_0^t \hat a^2(s) \int_s^t \hat b(u) \abs{\hat c(u-s)}\,du
\int_s^t \hat b(v) \abs{\hat c(v-s)}\,dv\,ds
\\
&= \int_0^t\!\!\int_0^t \hat b(u)\hat b(v) \int_0^{u\wedge v}
\hat a^2(s) \abs{\hat c(u-s)} \abs{\hat c(v-s)}\,ds\,du\,dv
\le A(T)<\infty,
\end{align*}
and the proof follows.
\end{proof}

Let us now consider the operator $\widehat\K$ associated with the kernel
$\widehat K(t,s)$ in \eqref{eq:genker} (similarly to the operator $\K$ from \eqref{eq:E} associated with $K(t,s)$).
In this case $\widehat\K$ has the form
\begin{align*}
\widehat\K f(t) &= \int_0^t \widehat K(t,s) f(s) \,ds
\\
&= \hat b(t)I_{0+}^c (\hat a f)(t)
- \int_0^t (\hat a f)(s) \int_s^t \hat b'(u) \hat c(u-s)\,du\,ds,
\quad f\in L_2([0,T]),
\end{align*}
and under the assumptions   \ref{(hatC1)}--\ref{(hatC3)}
we can apply the Fubini theorem and get
\begin{align*}
\widehat\K f(t) &= \hat b(t)I_{0+}^c (\hat a f)(t)
- \int_0^t \hat b'(u) \int_0^u \hat c(u-s)(\hat a f)(s)\,ds\,du
\\
&= \int_0^t \hat b(u)\, \frac{d}{du} \int_0^u \hat c(u-z)(\hat a f)(z)\,dz\,du
 = \int_0^t \hat b(u) D_{0+}^{\hat c}(\hat a f)(u)\,du.
\end{align*}
Consider the following Gaussian process
\begin{equation}\label{eq:gfbm1}
\widehat{Y}_t = \int_0^t \widehat{K}(t,s) \,dW_s, \quad t\in[0,T],
\end{equation}
where $W=\set{W_t,t\in[0,T]}$ is a Wiener process. Under assumptions \ref{(hatC1)}--\ref{(hatC3)} it is well defined on $[0,T]$.
Taking Lemma~\ref{lem8} from Appendix into account, it is easy to establish, similarly to Lemma~\ref{lem6}, the following result.

\begin{lemma}
Consider the equation
\[
\widehat{\mathcal K}f (t) = \int_0^t u(z)\,dz, \quad z\in[0,T].
\]
Then its solution has a form
\[
f(t) = \hat a^{-1} I_{0+}^h\left(\hat b^{-1} u\right)(t).
\]
\end{lemma}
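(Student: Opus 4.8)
The plan is to mirror the proof of Lemma~\ref{lem6}, interchanging the roles of the fractional integral and derivative and invoking the appropriate inversion identity for the Sonine pair $(\hat c,\hat h)$ from~\ref{(hatC3)} recorded in Lemma~\ref{lem8}. The operator $\widehat{\mathcal K}$ has already been brought to the form $\widehat{\mathcal K}f(t)=\int_0^t \hat b(u)\,D_{0+}^{\hat c}(\hat a f)(u)\,du$, so the equation $\widehat{\mathcal K}f(t)=\int_0^t u(z)\,dz$ reads
\[
\int_0^t \hat b(u)\,D_{0+}^{\hat c}(\hat a f)(u)\,du=\int_0^t u(z)\,dz,\qquad t\in[0,T].
\]
Both sides are absolutely continuous in $t$ and coincide for every $t$, hence their derivatives agree for almost every $t$, which gives
\[
\hat b(t)\,D_{0+}^{\hat c}(\hat a f)(t)=u(t)\qquad\text{a.e.}
\]

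First I would divide by $\hat b$, which is legitimate since $\hat b>0$ a.e.\ by~\ref{(hatC2)}, to obtain $D_{0+}^{\hat c}(\hat a f)(t)=\hat b^{-1}(t)u(t)$ a.e. Then I would apply the fractional integral $I_{0+}^h$ to both sides. The crucial input is that, for the Sonine pair $(\hat c,\hat h)$, the operator $I_{0+}^h$ is a left inverse of $D_{0+}^{\hat c}$, that is $I_{0+}^h D_{0+}^{\hat c}=\mathrm{Id}$; this is the counterpart, for the present pairing, of the identity $D_{0+}^h I_{0+}^c=\mathrm{Id}$ used in Lemma~\ref{lem6}, and it is exactly the content of Lemma~\ref{lem8}. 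Applied to $\hat a f$ it yields $\hat a(t)f(t)=I_{0+}^h(\hat b^{-1}u)(t)$, and a final division by $\hat a$ (positive a.e.\ by~\ref{(hatC2)}) produces the claimed formula $f(t)=\hat a^{-1}(t)\,I_{0+}^h(\hat b^{-1}u)(t)$.

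The step demanding the most attention is the inversion $I_{0+}^h D_{0+}^{\hat c}=\mathrm{Id}$. Unwinding the definitions, $D_{0+}^{\hat c}g=\frac{d}{dt}(\hat c * g)$ and $I_{0+}^h v=\hat h * v$, so $I_{0+}^h D_{0+}^{\hat c}g=\hat h*\frac{d}{dt}(\hat c*g)$; this collapses to $g$ by combining the Sonine identity $(\hat h * \hat c)(t)=\int_0^t \hat c(t-s)\hat h(s)\,ds=1$, which turns $\hat h*\hat c*g$ into $\int_0^\cdot g$, with the convolution--differentiation interchange $\frac{d}{dt}(\hat h * V)=\hat h * V'$, valid once $V(0)=(\hat c*\hat a f)(0)=0$. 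As in Lemma~\ref{lem6}, I would therefore impose the standing hypotheses that $\hat a f\in L_1([0,T])$ and that $I_{0+}^h(\hat b^{-1}u)$ is well defined, under which Lemma~\ref{lem8} applies directly; the remaining operations are the routine almost-everywhere divisions by the positive functions $\hat a$ and $\hat b$.
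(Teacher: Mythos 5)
Your proposal is correct and follows essentially the route the paper intends: the paper omits the proof entirely, stating only that the result is established ``similarly to Lemma~\ref{lem6}'' using Lemma~\ref{lem8}, and your argument --- differentiating both sides, dividing by $\hat b$, and applying $I_{0+}^{\hat h}$ as a left inverse of $D_{0+}^{\hat c}$ via Lemma~\ref{lem8}\ref{l1-ii} (with the roles of $c$ and $h$ interchanged, which is legitimate by the symmetry of the Sonine relation) --- is precisely that argument. The only point worth noting is that the hypothesis needed for Lemma~\ref{lem8}\ref{l1-ii} is that $\hat a f$ lies in the range $I_{0+}^{\hat h}\bigl(L_1([0,T])\bigr)$ rather than merely in $L_1([0,T])$, but your convolution-level justification of the inversion and your explicit listing of the standing well-definedness assumptions adequately cover this.
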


Furthermore, we prove the following result on the H\"older continuity of paths.

\begin{theorem}
Let the conditions \ref{(hatC1)}--\ref{(hatC3)} hold, together with the following assumptions:
\begin{enumerate}[label={\bf ($\mathbf{\widehat C \arabic*}$)},start=4]
\item\label{(hatC4)}
$\displaystyle \abs{\hat a(t)\hat b'(t)} \le C t^{-1}$, $t\in[0,T]$;

\item\label{(hatC5)}
there exists $\gamma\in(0,2)$ such that
\[
\int_0^t \hat c^2(s)\,ds \le Ct^\gamma, \quad t\in[0,T],
\]
and
 \[
\int_0^{T-t} \bigl(\hat c(t+s)-\hat c(s)\bigr)^2\,ds \le Ct^\gamma, \quad t\in[0,T].
\]
\end{enumerate}
Then the trajectories of the process $\widehat Y$ satisfy $\delta$-H\"older condition a.\,s. for any $\delta\in(0,\gamma/2)$.
\end{theorem}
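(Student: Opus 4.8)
The plan is to exploit that $\widehat Y$ is a centered Gaussian process (a Wiener integral of a deterministic kernel), so that every $L^p$-increment is controlled by the $L^2$-increment: for $p\ge 2$ one has $\ex\abs{\widehat Y_t-\widehat Y_s}^p = C_p\bigl(\ex\abs{\widehat Y_t-\widehat Y_s}^2\bigr)^{p/2}$. Hence it suffices to prove the single estimate $\ex\abs{\widehat Y_t-\widehat Y_s}^2\le C(t-s)^\gamma$ uniformly on $0\le s\le t\le T$; feeding this into the Kolmogorov continuity theorem with $p$ arbitrarily large then yields $\delta$-H\"older continuity for every $\delta<\gamma/2$, exactly as in the proof of Lemma~\ref{l:Brownian}. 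By the It\^o isometry, writing $\widehat Y_t-\widehat Y_s=\int_s^t\widehat K(t,u)\,dW_u+\int_0^s(\widehat K(t,u)-\widehat K(s,u))\,dW_u$ and using that the two Wiener integrals run over disjoint intervals and are therefore orthogonal, the problem reduces to bounding the diagonal term $\int_s^t\widehat K(t,u)^2\,du$ and the increment term $\int_0^s(\widehat K(t,u)-\widehat K(s,u))^2\,du$ separately by $C(t-s)^\gamma$.

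The algebraic heart of the argument is to arrange the kernel so that every occurrence of $\hat c$ is either evaluated at a small argument or sits inside a difference with a \emph{small shift}. Two facts drive every estimate: since $\hat a$ is nondecreasing and positive and $\hat a\hat b$ is bounded (conditions \ref{(hatC1)}, \ref{(hatC2)}), one has $\hat a(u)\hat b(t)\le(\hat a\hat b)(t)\le C$ for $u\le t$; and since moreover $\abs{\hat a(w)\hat b'(w)}\le Cw^{-1}$ (condition \ref{(hatC4)}), monotonicity of $\hat a$ gives $\hat a(u)\abs{\hat b'(w)}\le\hat a(w)\abs{\hat b'(w)}\le Cw^{-1}$ for $u\le w$. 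For the diagonal term I would keep the form $\widehat K(t,u)=\hat a(u)\hat b(t)\hat c(t-u)-\hat a(u)\int_u^t\hat b'(w)\hat c(w-u)\,dw$ and bound its two pieces by $C\abs{\hat c(t-u)}$ and $C\int_u^t w^{-1}\abs{\hat c(w-u)}\,dw$. For the increment term I would substitute $\hat b(t)=\hat b(s)+\int_s^t\hat b'(w)\,dw$ to cancel the bulk part, arriving at
\[
\widehat K(t,u)-\widehat K(s,u)=\hat a(u)\hat b(t)\bigl(\hat c(t-u)-\hat c(s-u)\bigr)+\hat a(u)\int_s^t\hat b'(w)\bigl(\hat c(s-u)-\hat c(w-u)\bigr)\,dw,
\]
where the surviving $\hat c$-differences carry the shifts $t-s$ and $w-s$, both small.

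It then remains to convert these pointwise bounds into the desired power of $t-s$. For the leading pieces, the substitutions $v=t-u$ (diagonal) and $r=s-u$ (increment) reduce the relevant integrals to $\int_0^{t-s}\hat c^2(v)\,dv$ and $\int_0^{s}\bigl(\hat c(r)-\hat c((t-s)+r)\bigr)^2\,dr$, both $\le C(t-s)^\gamma$ by the two bounds of condition \ref{(hatC5)} (the range $[0,s]$ being admissible since $s\le T-(t-s)$). For the two remaining double-integral pieces I would apply Minkowski's integral inequality to pull the $L^2_u$-norm inside the $dw$-integral; the inner norms are again controlled by \ref{(hatC5)}, giving $\int_0^{w-s}\hat c^2\le C(w-s)^\gamma$ and $\int_0^{s}\bigl(\hat c(r)-\hat c((w-s)+r)\bigr)^2\,dr\le C(w-s)^\gamma$. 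In both cases one is left with the single elementary integral $\int_s^t w^{-1}(w-s)^{\gamma/2}\,dw$, which after the substitution $w=s+\rho$ and the inequality $\rho^{\gamma/2}/(s+\rho)\le\rho^{\gamma/2-1}$ is bounded by $\frac{2}{\gamma}(t-s)^{\gamma/2}$. Squaring gives the required $C(t-s)^\gamma$ and completes the estimate.

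The main obstacle is precisely the singular weight $w^{-1}$ coming from condition \ref{(hatC4)}: a naive bound such as $w^{-1}\le s^{-1}$ destroys uniformity near $s=0$ and produces a spurious logarithmic factor. The whole point of the decomposition above is that the small shift $w-s$ in the $\hat c$-difference supplies a factor $(w-s)^{\gamma/2}$ that exactly absorbs the singularity of $w^{-1}$ at $w=s$ via $\rho^{\gamma/2}/(s+\rho)\le\rho^{\gamma/2-1}$; choosing this pairing correctly, rather than the routine applications of \ref{(hatC5)} and Minkowski's inequality, is where the care is needed.
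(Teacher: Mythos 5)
Your proposal is correct and follows essentially the same route as the paper's proof: the same reduction of all moments to $\ex\abs{\widehat Y_t-\widehat Y_s}^2$ via Gaussianity followed by Kolmogorov's theorem, the same splitting into a diagonal part and an increment part of the kernel, the same use of the monotonicity of $\hat a$, of the bound $\hat a(u)\abs{\hat b'(w)}\le Cw^{-1}$ from \ref{(hatC4)}, and the same absorption of the $w^{-1}$ singularity by the factor $(w-s)^{\gamma/2}$ coming from \ref{(hatC5)}. The only immaterial differences are that you regroup the increment of the kernel so that only differences of $\hat c$ appear (whereas the paper keeps a term with $\hat c(t_1-s)$ and controls it by the first bound in \ref{(hatC5)}), and that you invoke Minkowski's integral inequality where the paper expands the square and applies Fubini and the Cauchy--Schwarz inequality.
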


\begin{remark}
In the case when
$\hat a(s) = C s^{\frac12-H}$,
$\hat b(s) = \hat c(s) = s^{H-\frac12}$
we have that
$\hat a(s) \hat b'(s) = C s^{-1}$,
$\int_0^t\hat c^2(s)\,ds = C t^{2H}$,
\begin{align*}
\MoveEqLeft
\int_0^{T-t}\left(\hat c(t+s) - \hat c(s)\right)^2 ds
= \int_0^{T-t}\left((t+s)^{H-\frac12} - s^{H-\frac12}\right)^2 ds
\\
&= t^{2H}\int_0^{\frac{T}{t}-1}\left((1+z)^{H-\frac12} - z^{H-\frac12}\right)^2 dz
< t^{2H}\int_0^\infty \left((1+z)^{H-\frac12} - z^{H-\frac12}\right)^2 dz
\\
&\le Ct^{2H},
\end{align*}
because
$(1+z)^{H-\frac12} - z^{H-\frac12} \sim z^{H-\frac32}$, $z\to\infty$,
and so
$\int_0^\infty \left((1+z)^{H-\frac12} - z^{H-\frac12}\right)^2 dz\le C$.
Therefore, in this case we can put $\gamma = 2H$, and \ref{(hatC4)}--\ref{(hatC5)} hold.
\end{remark}

\begin{proof}
For $t_1<t_2$,
\begin{align*}
\MoveEqLeft
\ex\left(\widehat Y_{t_2}-\widehat Y_{t_1}\right)^2
=\ex\left(\int_0^{t_1}\left(\widehat K(t_2,s)-\widehat K(t_1,s)\right)\,dW_s + \int_{t_1}^{t_2}\widehat K(t_2,s)\,dW_s\right)^2
\\
&=\int_0^{t_1}\left(\widehat K(t_2,s)-\widehat K(t_1,s)\right)^2\,ds + \int_{t_1}^{t_2}\widehat K^2(t_2,s)\,ds
\\
&\le 2\Biggl(\int_0^{t_1}\hat a^2(s)\left(\hat b(t_2) \hat c (t_2-s) - \hat b(t_1) \hat c (t_1-s)\right)^2\,ds
\\
&\quad + \hat b^2(t_2) \int_{t_1}^{t_2}\hat a^2(s) \hat c^2 (t_2-s)\,ds
+\int_0^{t_1}\hat a^2(s)\left(\int_{t_1}^{t_2} \hat b'(u) \hat c(u-s)\,du\right)^2\,ds
\\
&\quad+\int_{t_1}^{t_2}\hat a^2(s)\left(\int_{s}^{t_2} \hat b'(u) \hat c(u-s)\,du\right)^2\,ds
\Biggr)
\eqqcolon 2(I_1+I_2+I_3+I_4)
\end{align*}
Let us show that each term in the right-hand side is bounded by $C(t_2-t_1)^\gamma$.
We make the analysis term by term.

1. The first term can be rewritten as follows:
\begin{align}
I_1&=
\int_0^{t_1}\hat a^2(s)\left(\hat b(t_2) \hat c (t_2-s) - \hat b(t_1) \hat c (t_1-s)\right)^2\,ds
\notag
\\
&\le 2\hat b^2(t_2) \int_0^{t_1}\hat a^2(s)\left(\hat c (t_2-s) - \hat c (t_1-s)\right)^2\,ds
\notag
\\
&\quad+ 2\left(\hat b(t_2) - \hat b(t_1)\right)^2
\int_0^{t_1}\hat a^2(s)\hat c^2(t_1-s)\,ds=:J_1+J_2.
\label{eq:1term}
\end{align}
The first term in the right-hand side of \eqref{eq:1term} is bounded as follows:
\begin{align*}
\MoveEqLeft
\hat b^2(t_2) \int_0^{t_1}\hat a^2(s)\left(\hat c (t_2-s) - \hat c (t_1-s)\right)^2\,ds
\\
&\le \hat b^2(t_2)\hat a^2(t_2)\int_0^{t_1}\left(\hat c (t_2-s) - \hat c (t_1-s)\right)^2\,ds
\\
&\le C \int_0^{t_1}\left(\hat c(t_2-t_1+z) - \hat c(z)\right)^2\,dz
\le C(t_2-t_1)^\gamma,
\end{align*}
and  the second one can be bounded as follows:
\begin{align*}
\MoveEqLeft
\left(\hat b(t_2) - \hat b(t_1)\right)^2
\int_0^{t_1}\hat a^2(s)\hat c^2(t_1-s)\,ds
=\int_0^{t_1}\hat a^2(s)\hat c^2(t_1-s)
\left(\int_{t_1}^{t_2}\hat b'(v)\,dv\right)^2ds
\\
&\le \int_0^{t_1}\hat c^2(t_1-s)
\left(\int_{t_1}^{t_2}\abs{\hat a(v)\hat b'(v)}\,dv\right)^2ds
\le C t_1^\gamma
\left(\int_{t_1}^{t_2}v^{-1}\,dv\right)^2
\\
&= C\left(\int_{t_1}^{t_2}t_1^{\frac\gamma2}v^{-1}\,dv\right)^2
\le C\left(\int_{t_1}^{t_2}v^{\frac\gamma2-1}\,dv\right)^2
\le C\left(t_2^{\frac\gamma2}-t_1^{\frac\gamma2}\right)^2
\le C\left(t_2-t_1\right)^\gamma.
\end{align*}
Here we have used the monotonicity of $ {\hat a}$ and then the conditions \ref{(hatC4)} and \ref{(hatC5)}.

2. The second term can be bounded with the help of conditions \ref{(hatC1)} and \ref{(hatC5)}:
\begin{align*}
I_2&=
\hat b^2(t_2) \int_{t_1}^{t_2}\hat a^2(s) \hat c^2 (t_2-s)\,ds
\le \hat a^2(t_2) \hat b^2(t_2)  \int_{t_1}^{t_2} \hat c^2 (t_2-s)\,ds
\\
&\le C \int_{0}^{t_2-t_1} \hat c^2 (z)\,dz
\le C\left(t_2-t_1\right)^\gamma.
\end{align*}

3. By Fubini's theorem and monotonicity of $ {\hat a}$, the third term can be estimated as follows:
\begin{align*}
I_3&=
\int_0^{t_1}\hat a^2(s)\left(\int_{t_1}^{t_2} \hat b'(u) \hat c(u-s)\,du\right)^2\,ds
\\
&= \int_0^{t_1}\hat a^2(s) \int_{t_1}^{t_2} \hat b'(u) \hat c(u-s)\,du\int_{t_1}^{t_2} \hat b'(v) \hat c(v-s)\,dv\,ds
\\
&= \int_{t_1}^{t_2}\!\!\int_{t_1}^{t_2} \hat b'(u) \hat b'(v)
\int_0^{t_1}\hat a^2(s)\hat c(u-s) \hat c(v-s)\,ds\,du\,dv
\\
&\le \int_{t_1}^{t_2}\!\!\int_{t_1}^{t_2} \abs{\hat a(u)\hat b'(u)} \abs{\hat a(v)\hat b'(v)}
\int_0^{t_1}\abs{\hat c(u-s) \hat c(v-s)}\,ds\,du\,dv.
\end{align*}
Then applying successively the condition \ref{(hatC4)}, the Cauchy--Schwarz inequality and the condition \ref{(hatC5)} we obtain:
\begin{align*}
I_3&\le C\int_{t_1}^{t_2}\!\!\int_{t_1}^{t_2}u^{-1}v^{-1}
\int_0^{t_1}\abs{\hat c(u-s) \hat c(v-s)}\,ds\,du\,dv
\\
&\le C\int_{t_1}^{t_2}\!\!\int_{t_1}^{t_2}u^{-1}v^{-1}
\left(\int_0^{t_1}\hat c^2(u-s)\,ds\right)^{\frac12}
\left(\int_0^{t_1}\hat c^2(v-s)\,ds\right)^{\frac12}du\,dv
\\
&\le C\int_{t_1}^{t_2}\!\!\int_{t_1}^{t_2}u^{\frac\gamma2-1}v^{\frac\gamma2-1}
\,du\,dv
\le C\left(t_2^{\frac\gamma2}-t_1^{\frac\gamma2}\right)^2
\le C\left(t_2-t_1\right)^\gamma.
\end{align*}

4. The fourth term can be bounded similarly to the third one:
\begin{align*}
I_4&=
\int_{t_1}^{t_2}\hat a^2(s)\left(\int_{s}^{t_2} \hat b'(u) \hat c(u-s)\,du\right)^2\,ds
\\
&=\int_{t_1}^{t_2}\!\!\int_{s}^{t_2}\!\!\int_{s}^{t_2}\hat a^2(s)
\hat b'(u) \hat c(u-s)\hat b'(v) \hat c(v-s)\,du\,dv\,ds
\\
&\le\int_{t_1}^{t_2}\!\!\int_{s}^{t_2}\!\!\int_{s}^{t_2}
\abs{\hat a(u)\hat b'(u)}\abs{\hat a(v)\hat b'(v)}
\abs{\hat c(u-s)\hat c(v-s)} du\,dv\,ds
\\
&\le C\int_{t_1}^{t_2}\!\!\int_{s}^{t_2}\!\!\int_{s}^{t_2}
u^{-1}v^{-1}\abs{\hat c(u-s)\hat c(v-s)} du\,dv\,ds
\\
&= C\int_{t_1}^{t_2}\!\!\int_{t_1}^{t_2} u^{-1}v^{-1}
\int_{t_1}^{u\wedge v}\abs{\hat c(u-s)\hat c(v-s)} ds\,du\,dv
\\
&\le
C\int_{t_1}^{t_2}\!\!\int_{t_1}^{t_2} u^{-1}v^{-1}
\left(\int_{t_1}^{u\wedge v}\hat c^2(u-s)\,ds\right)^{\frac12}
\left(\int_{t_1}^{u\wedge v}\hat c^2(v-s)\,ds\right)^{\frac12}\,du\,dv
\\
&\le
C\int_{t_1}^{t_2}\!\!\int_{t_1}^{t_2} u^{-1}v^{-1}
\left(\int_{t_1}^{u}\hat c^2(u-s)\,ds\right)^{\frac12}
\left(\int_{t_1}^{v}\hat c^2(v-s)\,ds\right)^{\frac12}\,du\,dv
\\
&\le
C\int_{t_1}^{t_2}\!\!\int_{t_1}^{t_2} u^{-1}v^{-1}
\left(u-t_1\right)^{\frac\gamma2}
\left(v-t_1\right)^{\frac\gamma2}\,du\,dv
\\
&\le
C\int_{t_1}^{t_2}\!\!\int_{t_1}^{t_2}
\left(u-t_1\right)^{\frac\gamma2-1}
\left(v-t_1\right)^{\frac\gamma2-1}\,du\,dv
\le
C\left(t_2-t_1\right)^\gamma.
\end{align*}
Combining the bounds we get
\[
\ex\left(\widehat Y_{t_2}-\widehat Y_{t_1}\right)^2
\le C\left(t_2-t_1\right)^\gamma,
\]
whence the result follows.
\end{proof}

\section{Equations with locally Lipschitz drift of linear growth}
\label{sec:4}

In this section we study stochastic differential equations with additive Volterra--L\'evy noise.
The noise considered has H\"older regularity of the paths as discussed in the first part of this work.
We shall adopt pathwise considerations and, for this reason, we start the study taking deterministic equations into account, then we move to discuss the stochastic cases.

\subsection{Deterministic equation}

To begin, consider nonrandom functions. Namely, let $T>0$ be fixed, $f=f(t)$, $t\in[0,T]$, and coefficient $u=u(x)$, $x\in\R$, be the measurable functions. Introduce the equation
\begin{equation}\label{eq:deterministic}
X_t=\int_0^t u(X_s)\,ds + f(t),\; t\in[0,T],\; X|_{t=0}=X_0\in \R.
\end{equation}

\begin{lemma}\label{l:ex-un_determ}
Let any of two following groups of conditions hold.
\begin{enumerate}[label=\textbf{(D\arabic*)}]
\item\label{(D1)}
\begin{enumerate}[1)]
\item
The coefficient $u$ is Lipschitz: there exists $C>0$ such that for any $x,y\in\R$,
\[
\abs{u(x)-u(y)} \le C \abs{x-y}.
\]
\item
The function $f$ is locally integrable.
\end{enumerate}

\item\label{(D2)}
 \begin{enumerate}[1)]
\item
The coefficient $u$ is of linear growth: there exists $C>0$ such that for any $x\in\R$,
\[
\abs{u(x)}\le C(1+\abs{x}).
\]
\item
the coefficient $u$ is locally Lipschitz: for any $R>0$ there exists $C_R>0$ such that for any $x,y\in\R$, $\abs{x},\abs{y}<R$,
\[
\abs{u(x)-u(y)} \le C_R \abs{x-y}.
\]
\item The function $f$ is locally bounded.
\end{enumerate}
\end{enumerate}
Then the equation \eqref{eq:deterministic} has a unique solution $X$ on $[0,T]$.
If condition \ref{(D1)} holds, then $X$ is locally integrable.
If condition \ref{(D2)} holds, then $X$ is locally bounded.
\end{lemma}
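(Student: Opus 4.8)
The plan is to prove uniqueness and existence by a fixed-point argument in a weighted $L_1$ space for the globally Lipschitz case \ref{(D1)}, and then to reduce case \ref{(D2)} to \ref{(D1)} by truncation, using an a priori Grönwall bound to remove the truncation.

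For \ref{(D1)}, I would introduce the operator $\Phi$ on $L_1([0,T])$ defined by $\Phi X(t) = f(t) + \int_0^t u(X_s)\,ds$. Since a globally Lipschitz $u$ obeys the linear bound $\abs{u(x)}\le\abs{u(0)}+C\abs{x}$, the map $\Phi$ sends $L_1([0,T])$ into itself (the integral term being absolutely continuous and $f$ locally integrable). I would then equip $L_1([0,T])$ with the Bielecki-type norm $\norm{X}_\lambda=\int_0^T\abs{X_t}e^{-\lambda t}\,dt$ and estimate, via the Lipschitz property and Fubini's theorem,
\[
\norm{\Phi X-\Phi\tilde X}_\lambda \le C\int_0^T e^{-\lambda t}\int_0^t\abs{X_s-\tilde X_s}\,ds\,dt \le \frac{C}{\lambda}\,\norm{X-\tilde X}_\lambda.
\]
Choosing $\lambda>C$ turns $\Phi$ into a contraction, so the Banach fixed-point theorem provides a unique fixed point in $L_1([0,T])$, which is precisely the unique locally integrable solution of \eqref{eq:deterministic}.

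For \ref{(D2)}, for each $R>0$ I would set $u_R(x)=u(\Pi_R x)$, where $\Pi_R$ denotes the projection of $\R$ onto $[-R,R]$. Then $u_R$ is globally Lipschitz, and --- crucially --- it keeps the same linear-growth constant as $u$, i.e.\ $\abs{u_R(x)}\le C(1+\abs{x})$ with the original $C$. By case \ref{(D1)} the truncated equation has a unique solution $X^R$, for which
\[
\abs{X^R_t}\le Ct+\sup_{0\le s\le t}\abs{f(s)}+C\int_0^t\abs{X^R_s}\,ds,
\]
so Grönwall's inequality yields $\abs{X^R_t}\le\bigl(Ct+\sup_{0\le s\le t}\abs{f(s)}\bigr)e^{Ct}$, a bound independent of $R$. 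Fixing $\tau<T$ and using that $f$ is locally bounded, I set $\rho=\bigl(C\tau+\sup_{0\le s\le \tau}\abs{f(s)}\bigr)e^{C\tau}<\infty$ and choose $R>\rho$; then $\abs{X^R_t}\le\rho<R$ on $[0,\tau]$, hence $u_R(X^R_t)=u(X^R_t)$ and $X^R$ solves \eqref{eq:deterministic} on $[0,\tau]$ and is locally bounded. Uniqueness follows because any solution satisfies the same a priori bound and therefore also solves the truncated equation, whose solution is unique; the local solutions agree on overlaps by uniqueness and patch into a single locally bounded solution on $[0,T]$.

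The routine fixed-point and Grönwall computations aside, the one genuinely delicate point is the weak regularity of $f$: since $f$ is only integrable (resp.\ locally bounded), there is no convenient space of continuous or bounded functions on which to run Picard iteration, and the weighted norm is what makes the contraction work globally on $[0,T]$ in \ref{(D1)}. In \ref{(D2)} the essential observation is that the truncation preserves the linear-growth constant, which is exactly what makes the Grönwall bound uniform in $R$ and lets me choose the truncation level \emph{after} establishing the estimate; verifying measurability of the fixed point and the pointwise meaning of the solution is then routine.
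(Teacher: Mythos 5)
Your argument is correct, and it diverges from the paper's proof in a way worth noting. For \ref{(D1)} the paper runs explicit Picard iteration starting from $X^{(0)}=0$, $X^{(1)}=f$, and obtains the factorial decay $\frac{(Ct)^{n-1}}{(n-1)!}\int_0^t\abs{f(s)}\,ds$ for $\int_0^t\abs{X^{(n)}_s-X^{(n-1)}_s}\,ds$, concluding that the iterates are Cauchy in $L_1$; your Bielecki-norm contraction is the standard repackaging of exactly this estimate (the weight $e^{-\lambda t}$ plays the role of the factorial), so the two are essentially interchangeable, with yours slightly cleaner since uniqueness comes for free from the fixed-point theorem rather than from a separate Gronwall step. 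The real difference is in \ref{(D2)}: the paper again iterates directly with the locally Lipschitz $u$, proves only that the iterates are uniformly locally bounded via the linear-growth assumption, and then asserts that "existence of the limit that is a unique solution is evident" --- implicitly relying on the local Lipschitz constant on the ball containing all iterates. Your truncation argument ($u_R=u\circ\Pi_R$, a priori Gronwall bound independent of $R$ because $\Pi_R$ is a contraction and preserves the linear-growth constant, then choice of $R$ above the a priori bound) makes that step explicit and simultaneously delivers uniqueness within the class of locally integrable solutions, since any such solution obeys the same $R$-free bound and hence solves the truncated equation. This is a genuinely more complete treatment of \ref{(D2)} than the paper gives; the only cosmetic remark is that since $f$ is locally bounded on the compact interval $[0,T]$ you could take $\tau=T$ at once and skip the patching.
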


\begin{proof}
First, we assume that \ref{(D1)} holds.
Let $t_0>0$ be some number.
We apply successive approximations with
$X_t^{(0)}=0$, $X_t^{(1)}=f(t) \in L_1([0,t_0])$,
\begin{equation}\label{eq:approx}
X_t^{(n)} = \int_0^t u\left (X_s^{(n-1)}\right )\,ds + f(t) \in L_1([0,t_0]).
\end{equation}
Then for any $0<t\le t_0$,
\begin{align*}
\MoveEqLeft
\int_0^t \abs{X_s^{(n)} - X_s^{(n-1)}}ds
\le \int_0^t\!\!\int_0^s  \abs{u\left(X_v^{(n-1)}\right) - u\left(X_v^{(n-2)}\right)}dv\,ds
\\
&\le C \int_0^t\abs{X_v^{(n-1)} - X_v^{(n-2)}}(t-v)\,dv
\le\dots\le C^{n-1} \int_0^t \abs{f(s)} \frac{(t-s)^{n-1}}{(n-1)!}\,ds
\\
&\le \frac{(Ct)^{n-1}}{(n-1)!} \int_0^t \abs{f(s)}\,ds.
\end{align*}
This means that $X^{(n)}$ is a Cauchy sequence in $L_1([0,t_0])$, therefore there exists a limit $X_t = \lim_{n\to\infty} X_t^{(n)}$ in $L_1([0,t_0])$.
It is clear that $X$ is a solution of \eqref{eq:deterministic}.
Uniqueness follows from the Gronwall inequality.

Now let us consider the case  when \ref{(D2)} holds.
As before, let $t_0>0$ be fixed, and $f(t) \le C = C(t_0)$.
With $X_t^{(0)} = 0$, $X_t^{(1)} = f(t)$ is locally bounded, and every $X^{(n)}$ that is defined by \eqref{eq:approx} is locally bounded as well.
Moreover,
\begin{align*}
\abs{X_t^{(n)}} &\le \abs{f(t)} + Ct + C \int_0^t \abs{X_s^{(n-1)}} ds
\\
&\le \abs{f(t)} + Ct + C\int_0^t (\abs{f(s)} +Cs) ds + C^2 \int_0^t \abs{X_s^{(n-2)}} (t-s)\, ds
\le\dots\le
\\
&\le \abs{f(t)} + Ct + C\int_0^t (\abs{f(s)} + Cs) e^{C(t-s)} ds,
\end{align*}
therefore, $X^{(n)}$ are totally locally bounded.
Existence of the limit that is a unique solution of \eqref{eq:deterministic} is evident.
\end{proof}

\subsection{Stochastic equation}

Now let us return to the equation \eqref{main.object}, that is, let us consider the Volterra--L\'evy process $Y_t = \int_0^t g(t,s)\,dZ_s$ instead of the deterministic function $f$.
According to Lemma~\ref{l:ex-un_determ}, in order to obtain the existence and uniqueness of a solution, it suffices to establish either local integrability or local boundedness of $Y$.

First, we study the sufficient conditions for integrability.
Namely, we present the conditions supplying
$\ex\int_0^T\abs{Y_t}\,dt<\infty$.
If the assumption \ref{(A1)} holds, then by \eqref{eq:est1},
\[
\ex\int_0^T\abs{Y_t}\,dt
\le \int_0^T\!\left(\ex\abs{Y_t}^p\right)^\frac1p\,dt
\le C\left(\int_{\R}|x|^p\pi(dx)\right)^{\frac1p} \int_0^T\norm{g(t,\cdot)}_{L_p([0,t])}\,dt,
\]
therefore, the sufficient condition for integrability is
$\int_0^T\norm{g(t,\cdot)}_{L_p([0,t])}\,dt<\infty$.

Similarly, if the assumption \ref{(A2)} holds, then using \eqref{eq:est2} we get
\begin{multline*}
\ex\int_0^T\abs{Y_t}\,dt
\le C a^{\frac12} \int_0^T\norm{g(t,\cdot)}_{L_2([0,t])}\,dt
\\
+ C\left(\int_{\R}|x|^p\pi(dx)\right)^{\frac1p} \int_0^T\norm{g(t,\cdot)}_{L_p([0,t])}\,dt.
\end{multline*}
Since $p\ge2$, we see that again the sufficient condition for integrability has the form
$\int_0^T\norm{g(t,\cdot)}_{L_p([0,t])}\,dt<\infty$.
In the Gaussian case the second term vanishes, hence a weaker condition is required, namely $\int_0^T\norm{g(t,\cdot)}_{L_2([0,t])}\,dt<\infty$.

Now let the kernel $g$ satisfy the assumption \ref{(B1)}.
Then
\begin{align*}
\int_0^T\norm{g(t,\cdot)}_{L_p([0,t])}\,dt
&\le C \int_0^T t^\alpha\left(\int_0^t s^{\beta p} (t-s)^{\gamma p}\,ds\right)^{\frac 1p} \,dt
\\
&\le C \int_0^T t^{\alpha+\beta+\gamma +\frac 1p} \,dt,
\end{align*}
where we have used the equality
$\int_0^t s^{\beta p} (t-s)^{\gamma p}\,ds = \Beta(\beta p+1,\gamma p+1) t^{\beta p + \gamma p + 1}$ (assuming that $\beta>-\frac1p$, $\gamma>-\frac1p$).
Consequently, under the assumption \ref{(B1)} the condition $\int_0^T\norm{g(t,\cdot)}_{L_p([0,t])}\,dt<\infty$ holds, if $\alpha+\beta+\gamma +\frac 1p>-1$.

Similarly to Lemmas \ref{l:pure-jump}--\ref{l:general}, we can consider three cases. Thus, we arrive at the following result.
\begin{theorem}
Assume that one of the following assumptions holds:
\begin{enumerate}
\item
$p\ge1$, $a=0$, $\int_\R\abs{x}^p\,\pi(dx)<\infty$,
the condition \ref{(B1)} holds with some $\alpha\in\R$, $\beta>-\frac1p$, $\gamma>-\frac1p$ such that $\alpha+\beta+\gamma>-\frac1p-1$;
\item
$p\ge2$, $\int_\R\abs{x}^p\,\pi(dx)<\infty$,
the condition \ref{(B1)} holds with some $\alpha\in\R$, $\beta>-\frac1p$, $\gamma>-\frac1p$ such that $\alpha+\beta+\gamma>-\frac1p-1$;
\item
$Z$ is a Brownian motion,
the condition \ref{(B1)} holds with $p=2$, $\alpha\in\R$, $\beta>-\frac12$, $\gamma>-\frac12$ such that $\alpha+\beta+\gamma>-\frac32$.
\end{enumerate}
Then $\ex\int_0^T\abs{Y_t}\,dt<\infty$.
Consequently, if the coefficient $u$ satisfies the assumption \ref{(D1)} 1) of Lemma~\ref{l:ex-un_determ}, then the equation \eqref{main.object} has a unique solution.
\end{theorem}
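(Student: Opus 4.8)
The plan is to exploit the integrability criteria derived immediately above the statement together with the power-restriction bound \ref{(B1)}, handling the three cases in a unified way, and then to invoke Lemma~\ref{l:ex-un_determ} pathwise. First I would recall that the discussion preceding the theorem already reduces everything to controlling an integral of norms of $g(t,\cdot)$: under \ref{(A1)} or \ref{(A2)}, the a priori estimates \eqref{eq:est1}--\eqref{eq:est2} show that a sufficient condition for $\ex\int_0^T\abs{Y_t}\,dt<\infty$ is $\int_0^T\norm{g(t,\cdot)}_{L_p([0,t])}\,dt<\infty$, whereas in the purely Gaussian (Brownian) situation the jump term drops out and only the weaker requirement $\int_0^T\norm{g(t,\cdot)}_{L_2([0,t])}\,dt<\infty$ is needed.

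Second, I would feed the power restriction \ref{(B1)} into this criterion. As computed in the text, for any admissible exponent one has
\[
\int_0^T\norm{g(t,\cdot)}_{L_p([0,t])}\,dt \le C\int_0^T t^{\alpha+\beta+\gamma+\frac1p}\,dt,
\]
where the inner estimate uses $\int_0^t s^{\beta p}(t-s)^{\gamma p}\,ds = \Beta(\beta p+1,\gamma p+1)\,t^{(\beta+\gamma)p+1}$, which is legitimate precisely because $\beta>-\frac1p$ and $\gamma>-\frac1p$. This last integral is finite exactly when $\alpha+\beta+\gamma+\frac1p>-1$. Matching $p$ to each case then closes all three bullets at once: in cases~1 and~2 the hypothesis $\alpha+\beta+\gamma>-\frac1p-1$ is precisely this inequality, while in the Brownian case~3 one takes $p=2$ in the $L_2$-bound, so the threshold becomes $\alpha+\beta+\gamma>-\frac12-1=-\frac32$, which is the stated condition. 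Hence $\ex\int_0^T\abs{Y_t}\,dt<\infty$ in every case.

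Finally, the finiteness of $\ex\int_0^T\abs{Y_t}\,dt$ forces $\int_0^T\abs{Y_t}\,dt<\infty$ almost surely, so for almost every $\omega$ the trajectory $t\mapsto Y_t(\omega)$ is locally integrable on $[0,T]$. For each such $\omega$ I would apply Lemma~\ref{l:ex-un_determ} with $f(\cdot)=Y_\cdot(\omega)$ and the Lipschitz coefficient $u$, which provides both parts of hypothesis \ref{(D1)} and yields a unique locally integrable solution of \eqref{eq:deterministic}, i.e.\ of \eqref{main.object}. The one point requiring care — the main, though mild, obstacle — is that this pathwise construction must produce a genuinely measurable (adapted) solution process; this follows because the successive approximations in the proof of Lemma~\ref{l:ex-un_determ} are measurable in $\omega$ and converge in $L_1([0,t_0])$, so their limit inherits measurability, while pathwise uniqueness is again inherited from the Gronwall argument used there.
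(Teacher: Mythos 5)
Your proposal is correct and follows essentially the same route as the paper: the integrability criterion $\int_0^T\norm{g(t,\cdot)}_{L_p([0,t])}\,dt<\infty$ derived from the a priori estimates, the Beta-function computation under \ref{(B1)} yielding the threshold $\alpha+\beta+\gamma+\frac1p>-1$ in each of the three cases, and the pathwise application of Lemma~\ref{l:ex-un_determ}. Your closing remark on measurability of the pathwise limit is a sensible addition that the paper leaves implicit.
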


Now we adapt the condition \ref{(D2)} 3) of Lemma~\ref{l:ex-un_determ} to the stochastic case.
Since continuity is a sufficient condition for local boundedness, we obtain the following corollary from Lemmas \ref{l:pure-jump}--\ref{l:general}.

\begin{theorem}
Assume that one of the following assumptions holds:
\begin{enumerate}
\item
$p\ge1$,
$a=0$, $\int_\R\abs{x}^p\,\pi(dx)<\infty$,
the conditions \ref{(B1)} and \ref{(B2)} hold with some $\alpha\in\R$, $\beta>-\frac1p$, $\gamma>-\frac1p$, $\delta>\frac1p$ such that $\alpha+\beta+\gamma>-\frac1p$, $\kappa>0$;
\item
$p\ge2$ we have
$\int_\R\abs{x}^p\,\pi(dx)<\infty$ and
the conditions \ref{(B1)} and \ref{(B2)} hold with some $\alpha\in\R$, $\beta>-\frac1p$, $\gamma>-\frac1p$, $\delta>\frac1p$ such that $\alpha+\beta+\gamma>-\frac1p$, $\kappa>0$;
\item
$Z$ is a Brownian motion,
the conditions \ref{(B1)} and \ref{(B2)} hold with $p=2$, $\alpha\in\R$, $\beta>-\frac12$, $\gamma>-\frac12$, $\delta>0$ such that $\alpha+\beta+\gamma>-\frac12$, $\kappa>-\frac12$.
\end{enumerate}
Then $Y$ has a.\,s. continuous (hence, locally bounded) sample paths.
Consequently, if the coefficient $u$ satisfies the assumptions \ref{(D2)} 1), 2) of Lemma~\ref{l:ex-un_determ}, then the equation \eqref{main.object} has a unique solution.
\end{theorem}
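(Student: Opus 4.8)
The plan is to read off the continuity assertion directly from the three H\"older-regularity lemmas already established, and then to invoke the deterministic solvability result of Lemma~\ref{l:ex-un_determ} pathwise. The three groups of hypotheses are arranged so as to match exactly the assumptions of Lemmas~\ref{l:pure-jump}, \ref{l:general} and \ref{l:Brownian}, respectively: case~1 is the pure-jump setting $a=0$ of Lemma~\ref{l:pure-jump}, case~2 is the general L\'evy setting of Lemma~\ref{l:general}, and case~3 is the Brownian setting of Lemma~\ref{l:Brownian}. In each case the extra parameter restrictions ($\kappa>0$ and $\delta>\frac1p$ in cases~1 and~2, $\kappa>-\frac12$ in case~3) are precisely those guaranteeing a strictly positive H\"older exponent in the relevant lemma. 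I would therefore simply invoke the appropriate lemma to conclude that $Y$ admits a modification whose trajectories are a.\,s.\ H\"older continuous of some positive order, and hence a.\,s.\ continuous on $[0,T]$.

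For the existence and uniqueness of a solution I would argue pathwise. Since a continuous function on the compact interval $[0,T]$ is bounded, for almost every $\omega$ the path $t\mapsto Y_t(\omega)$ is locally bounded, which supplies assumption \ref{(D2)}~3) of Lemma~\ref{l:ex-un_determ}. Fix such an $\omega$ and write \eqref{main.object} in integral form as $X_t = \int_0^t u(X_s)\,ds + f(t)$ with $f(t) := X_0 + Y_t(\omega)$; recalling that $Y_0 = 0$, this $f$ is locally bounded and satisfies $f(0)=X_0$, so the equation is exactly the deterministic equation \eqref{eq:deterministic}. As $u$ is assumed to be of linear growth and locally Lipschitz, conditions \ref{(D2)}~1) and~2) hold as well, so the full group \ref{(D2)} is in force and Lemma~\ref{l:ex-un_determ} yields a unique locally bounded solution for this $\omega$.

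I do not expect any genuine analytic obstacle here, since all the real work has been carried out in the regularity lemmas and in Lemma~\ref{l:ex-un_determ}: the present statement is essentially their combination, in the same spirit as the integrability-based theorem preceding it. The two points requiring care are the case-by-case matching of the parameter ranges to the correct lemma, and the passage from pathwise solvability to a genuine stochastic-process solution. For the latter I would observe that the successive approximations \eqref{eq:approx} defining the solution in Lemma~\ref{l:ex-un_determ} depend measurably (indeed, continuously) on $f$, hence on $Y$, so that the pathwise solutions assemble into a jointly measurable, adapted process.
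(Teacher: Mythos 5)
Your proposal is correct and follows essentially the same route as the paper: each of the three hypothesis groups is matched to Lemma~\ref{l:pure-jump}, Lemma~\ref{l:general} and Lemma~\ref{l:Brownian} respectively to get a positive H\"older exponent, hence a.\,s.\ continuity and local boundedness of the paths, after which the equation is solved pathwise via condition \ref{(D2)} of Lemma~\ref{l:ex-un_determ}. The paper states this as an immediate corollary without further detail, so your additional remark on the measurable dependence of the successive approximations on the path of $Y$ is a harmless (and slightly more careful) elaboration rather than a different argument.
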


We remark that it seems that there no general results about solutions of stochastic differential equations \eqref{main.object} with Volterra--L\'evy noise without some form of Lipschitz continuity assumptions.
There are instead some papers dealing with some classes of such equations also with exploding drift.
We refer e.\,g.\ to \cite{New2} for a short survey and the study of a class of such equations.

In the next section we address another class of equation without Lipschitz drift.
We focus on Volterra--Gaussian processes.
The particular case of fractional Brownian motion was considered in \cite{NO}.

\section{Equations with Volterra--Gaussian processes}
\label{sec:5}

Now our goal is to consider equations with additive noise represented by various Volterra--Gaussian processes, some of which  were introduced in \cite{MSS}. Our aim is to relax the conditions on the drift coefficient, in a similar fashion to what was done in the paper \cite{NO}.
Remark that, in \cite{NO}, the noise was fractional Brownian motion, but here we deal with more general noise.

\subsection{Girsanov theorem. Definition of weak and strong solutions}

Let $\set{\F_t^V,t\in[0,T]}$ denote the natural filtration of $V$, where $V$ can be either $Y$ defined by \eqref{eq:gfbm} and \eqref{eq:kernK}, or it can be $\widehat{Y}$ is defined by \eqref{eq:gfbm1} and \eqref{eq:genker}.
For some process $u=\set{u_t,t\in[0,T]}$ with integrable trajectories, denote
\[
z(s) = \left(a^{-1} D^h_{0+} \left (u b^{-1}\right)\right)(s),
\quad
\hat{z}(s) = \left(\hat a^{-1} I_{0+}^{\hat{h}}\left(\hat b^{-1} u\right)\right)(s).
\]
Let
\[
\xi_T = \exp\set{-\int_0^T z(s)\,dW_s
-\frac12 \int_0^T z^2(s)\,ds},
\]
and
\[
\widehat{\xi}_T = \exp\set{-\int_0^T \hat{z}(s)\,dW_s
-\frac12 \int_0^T z^2(s)\,ds},
\]
respectively.
\begin{theorem}\label{th1}
\begin{itemize}
\item[$1)$]
Let the assumptions \ref{(C1)}--\ref{(C3)} hold, and
let $u=\set{u_t,t\in[0,T]}$ be a   $F^Y$-adapted process with integrable trajectories.
Consider the transformation
\begin{equation}\label{eq:gfbm-trans}
V_0(t) = Y_t + \int_0^t u_s\,ds.
\end{equation}
Assume that
\begin{enumerate}[\it(i)]
\item
$z \in L_2([0,T])$ a.\,s.,
and
\item
$\ex\xi_T=1$.
\end{enumerate}
Then $V_0$ can be represented as
\[
V_0(t) = \int_0^t K(t,s) \,d B_s, \quad t\in[0,T],
\]
where $B$ is a $\F^Y$-Wiener process under the new probability $  \pr_B$ defined by $d  \pr_B / d\pr = \xi_T$.
\item[$2)$]
Let the assumptions \ref{(hatC1)}--\ref{(hatC3)} hold, and
let $u=\set{u_t,t\in[0,T]}$ be a   $F^{\widehat{Y}}$-adapted process with integrable trajectories.
Consider the transformation
\[
\widehat{V}_0(t) = \widehat{Y}_t + \int_0^t u_s\,ds.
\]
Assume that
\begin{enumerate}[resume*]
\item\label{th1-(iii)}
$\hat{z} \in L_2([0,T])$ a.\,s.,
and
\item\label{th1-(iv)}
$\ex\widehat{\xi}_T=1$
\end{enumerate}
Then $\widehat V_0$ can be represented as
\[
\widehat{V}_0(t) = \int_0^t \widehat{K}(t,s) \,d \widehat{B}_s, \quad t\in[0,T],
\]
where $\widehat{B}$ is a $\F^{\widehat{Y}}$-Wiener process under the new probability $  \pr_{\widehat{B}}$ defined by $d\pr_{\widehat{B}} / d\pr = \widehat{\xi}_T$.
\end{itemize}
\end{theorem}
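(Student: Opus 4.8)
The two parts have identical structure, so the plan is to prove part~$1)$ in detail and to obtain part~$2)$ by replacing $\K$, $K$, $z$, $\xi_T$ with their hatted counterparts and invoking the corresponding inversion lemma for $\widehat\K$. The strategy for part~$1)$ is to absorb the added drift $\int_0^t u_s\,ds$ into a shifted driving process and then to apply the classical Girsanov theorem.

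First I would carry out the deterministic inversion. Since $u=\set{u_t,t\in[0,T]}$ has integrable trajectories, the equation $\K z(t)=\int_0^t u_s\,ds$ fits the framework of Lemma~\ref{lem6}, whose solution is precisely $z(s)=\bigl(a^{-1}D_{0+}^h(ub^{-1})\bigr)(s)$, the process appearing in the statement. Thus, pathwise,
\[
\int_0^t u_s\,ds=\K z(t)=\int_0^t K(t,s)\,z(s)\,ds,\quad t\in[0,T].
\]
Substituting this into \eqref{eq:gfbm-trans} and using $Y_t=\int_0^t K(t,s)\,dW_s$ gives
\[
V_0(t)=\int_0^t K(t,s)\,dW_s+\int_0^t K(t,s)\,z(s)\,ds=\int_0^t K(t,s)\,dB_s,
\]
where $B_t\coloneqq W_t+\int_0^t z(s)\,ds$. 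Hypothesis~$(i)$ guarantees that both $\int_0^\cdot z(s)\,ds$ and the Wiener integral $\int_0^\cdot z(s)\,dW_s$ are well defined, so $B$ is a continuous semimartingale under $\pr$ and the displayed identity holds as an equality of random variables, independently of the measure.

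It remains to show that $B$ is an $\F^Y$-Wiener process under $\pr_B$. Hypothesis~$(ii)$, $\ex\xi_T=1$, ensures that the Dol\'eans exponential is a genuine $\pr$-martingale, hence that $\pr_B$ defined by $d\pr_B/d\pr=\xi_T$ is a probability measure; the Girsanov theorem then yields that $B_t=W_t+\int_0^t z(s)\,ds$ is a Wiener process under $\pr_B$ with respect to the filtration generated by $W$. To upgrade this to $\F^Y$ I would invoke the invertibility of $\K$: since $W$ can be reconstructed from $Y$ through the inverse of Lemma~\ref{lem6} while $Y$ is built from $W$, the natural filtrations coincide, $\F^Y=\F^W$. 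Because $u$ is $\F^Y$-adapted and $z$ is obtained from $u$ by the deterministic Volterra operator $a^{-1}D_{0+}^h(\,\cdot\,b^{-1})$, the process $z$ is $\F^Y$-adapted, and so is $B$; together with $\F^Y=\F^W$ this proves the claim.

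I expect the main obstacle to be the rigorous justification of the deterministic step, the Girsanov step being standard once $\ex\xi_T=1$. Concretely, one must verify that the hypotheses of Lemma~\ref{lem6} are met, namely that $ub^{-1}$ lies in the domain of $D_{0+}^h$ and that $D_{0+}^h(ub^{-1})\in L_1([0,T])$, so that the identity $\K z=\int_0^\cdot u_s\,ds$ genuinely holds; here hypothesis~$(i)$, $z\in L_2([0,T])$, supplies the key integrability, and the Fubini-type interchanges underlying the representation \eqref{eq:int} of $\K$ must be checked. The second delicate point is the filtration identity $\F^Y=\F^W$, which rests on the pathwise invertibility of the kernel operator and is exactly what makes $B$ an $\F^Y$-Wiener process, as required for $V_0$ to be a genuine Volterra--Gaussian process under $\pr_B$.
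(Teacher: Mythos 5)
Your proposal is correct and follows essentially the same route as the paper: both invert the Volterra operator $\K$ via Lemma~\ref{lem6} to identify the drift $\int_0^\cdot u_s\,ds$ with $\int_0^\cdot K(\cdot,s)z(s)\,ds$, write $V_0(t)=\int_0^t K(t,s)\,dB_s$ with $B_t=W_t+\int_0^t z(s)\,ds$, and conclude by the standard Girsanov theorem, treating part~$2)$ as a verbatim analogue. Your additional remarks on the filtration identity $\F^Y=\F^W$ and on the domain conditions for $D_{0+}^h$ are sensible elaborations of points the paper leaves implicit.
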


\begin{proof} Let us prove only $1)$ since both statements are proved similarly.
Inserting \eqref{eq:gfbm} into \eqref{eq:gfbm-trans},
we can write
\[
V_0(t) = \int_0^t K(t,s) \,dW_s + \int_0^t u_s\,ds
=\int_0^t K(t,s) \,dB_s,
\]
where
\[
B_t
= W_t + \int_0^t \K^{-1}\left(\int_0^\cdot u_s\,ds\right) (r)\,dr.
\]
Using   \eqref{eq:sol}, we get
\[
B_t= W_t + \int_0^t a^{-1}(r) D^h_{0+} \left (ub^{-1}\right) (r)\,dr.
\]
Finally, by the standard Girsanov theorem, $B$ is a $\F^Y$-Wiener process under the probability $  \pr_B$.
\end{proof}

In the sequel, we study two stochastic differential equations
\begin{equation}\label{eq:sde-gvp}
X_t = x + V_t + \int_0^t u(s,X_s)\,ds,\quad t\in[0,T],
\end{equation}
where $x\in\R$, $u\colon[0,T]\times\R\to \R$
is a measurable function, $V = Y,\widehat{Y}$, where $Y$ is defined by \eqref{eq:gfbm} and \eqref{eq:kernK}, while $\widehat{Y}$ is defined by \eqref{eq:gfbm1} and \eqref{eq:genker}.
We shall consider both strong and weak solutions according to the definition below.

\begin{definition}
\begin{enumerate}[(i)]
\item
By a weak solution of equation \eqref{eq:sde-gvp} we mean a couple of processes $(V,X)$ on the filtered probability space $(\Omega,\F,\FF^V,\pr)$, such that
\begin{equation}\label{eq:w-sol}
V_t = \int_0^t K(t,s)\,dW_s
\quad\text{or}\quad
V_t = \int_0^t \widehat K(t,s)\,dW_s,
\end{equation}
respectively, with some Wiener process $W$, and $(V,X)$ satisfy \eqref{eq:sde-gvp}.

\item
By a strong solution of equation \eqref{eq:sde-gvp} we understand a process $X$ on $(\Omega,\F,\FF^V,\pr)$, and $V$ is of the form \eqref{eq:w-sol} with the fixed Wiener process $W$.
\end{enumerate}
\end{definition}

\subsection{Weak existence and weak uniqueness}

Let the coefficients $a$, $b$, $c$ satisfy the assumptions \ref{(C1)}--\ref{(C4)}.
Then, according to Proposition~\ref{prop:holder}, the stochastic process $Y$ has a modification satisfying H\"{o}lder condition up to order $\nu\in(0,1/2).$
\begin{theorem}\label{th2}
\begin{enumerate}[(i)]
\item Assume that $u(s,x)$ satisfies the sublinear growth condition: there exist  such $0<\alpha<1$ and $C>0$ that
\begin{equation}\label{eq:lin-grow}
\abs{u(t,x)} \le C(1+\abs{x}^\alpha),
\end{equation}
and   H\"{o}lder condition in space and time: there exist    $0<\beta\le 1, 0<\gamma< 1$ and $C>0$ such that for any $s,t\in [0,T]$ and any $x,y\in \mathbb{R}$
$$|u(t,x)-u(s,y)|\leq C\left(\abs{t-s}^\beta+\abs{y-x}^\gamma\right).$$

Additionally to \ref{(C1)}--\ref{(C4)}, let also functions $a$, $b$ and $h$ satisfy the following assumption: there exist $C>0$ and $\nu'\in(0,\nu)$ such that
\begin{equation}\begin{gathered}\label{four}
\int_0^T  a^{-2} (s)   h^2 (s)    b^{-2}(s)\,ds \le C,\\
\int_0^Ta^{-2} (t)\left(\int_0^t \abs{  h' (t-r)} \abs{b^{-1}(t) - b^{-1}(r)}\,dr\right)^2\,dt \le C, \\
\int_0^Ta^{-2} (t) {b^{-2}(t)}\left(\int_0^t \abs{  h' (t-r)} (t-r)^{\beta}\,dr\right)^2\,dt \le C,\\
\int_0^Ta^{-2} (t) {b^{-2}(t)}\left(\int_0^t \abs{  h' (t-r)} (t-r)^{\gamma\nu'}\,dr\right)^2\,dt \le C.
\end{gathered}\end{equation}
Then the equation \eqref{eq:sde-gvp} with $V = Y$ has a unique weak solution.
\item Assume that $u(s,x)$ satisfies the sublinear growth condition \eqref{eq:lin-grow},
and, additionally to \ref{(hatC1)}--\ref{(hatC3)}, functions $\hat a$, $\hat b$ and $\hat h$ satisfy following  assumption: there exists  $C>0$ such that
\begin{equation}\label{eq:bounded}
\hat a^{-1} (s) \int_0^s \abs{\hat h (s-r)} \abs{\hat b^{-1}(r)}\,dr \le C.
\end{equation}
Then equation \eqref{eq:sde-gvp} with $V=\widehat Y$ has a unique weak solution.
\end{enumerate}
\end{theorem}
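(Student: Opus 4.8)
The plan is to prove both existence and uniqueness through the Girsanov-type change of measure supplied by Theorem~\ref{th1}, following the scheme used for fractional Brownian motion in \cite{NO}. I treat part (i) in detail; part (ii) runs in parallel with $I_{0+}^{\hat h}$ replacing $D_{0+}^h$ and is in fact easier. For \emph{weak existence} I would start on a space carrying a Wiener process $W$, set $V_t=\int_0^t K(t,s)\,dW_s$ and $X_t=x+V_t$, and apply Theorem~\ref{th1}, item~$1)$, to the adapted process $u_s:=-u(s,X_s)$, with the associated integrand $z(s)=a^{-1}(s)D_{0+}^h(u_\cdot b^{-1})(s)$ and density $\xi_T$. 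Granting the two hypotheses of that theorem, the measure $\pr_B$ with $d\pr_B/d\pr=\xi_T$ makes $B_t=W_t+\int_0^t z(r)\,dr$ a Wiener process and turns $V_t+\int_0^t u_s\,ds$ into $\widetilde V_t:=\int_0^t K(t,s)\,dB_s$; rearranging gives $X_t=x+\widetilde V_t+\int_0^t u(s,X_s)\,ds$, so $(\widetilde V,X)$ is a weak solution. Hence everything reduces to checking conditions $(i)$ $z\in L_2([0,T])$ a.s.\ and $(ii)$ $\ex\xi_T=1$.

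The heart of the matter is the estimate of $z$. Writing $g=u_\cdot b^{-1}$ and using the Marchaud-type representation of the Sonine-pair derivative,
\[
D_{0+}^h g(t) = h(t)g(t) - \int_0^t h'(t-r)\bigl(g(t)-g(r)\bigr)\,dr,
\]
I would decompose the increment as $g(t)-g(r)=b^{-1}(t)\bigl(u_t-u_r\bigr)+u_r\bigl(b^{-1}(t)-b^{-1}(r)\bigr)$. By the assumed Hölder continuity of $u$ one has $\abs{u_t-u_r}\le C\bigl(\abs{t-r}^\beta+\abs{V_t-V_r}^\gamma\bigr)$, and since $V$ has a $\nu'$-Hölder modification (Proposition~\ref{prop:holder}), $\abs{V_t-V_r}^\gamma\le \Lambda^\gamma\abs{t-r}^{\gamma\nu'}$ with an a.s.\ finite Hölder seminorm $\Lambda$; sublinear growth gives $\abs{u_r}\le C\Theta$ with $\Theta:=1+\sup_{s}\abs{V_s}^\alpha$. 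Squaring, integrating $dt$, and invoking the four bounds in \eqref{four} term by term — the first for the boundary term $a^{-1}h\,u_t b^{-1}$, the second for the $b^{-1}$-increment (factor $u_r$ absorbed by $\Theta$), the third for the time-Hölder piece $(t-r)^\beta$ and the fourth for the space-Hölder piece $(t-r)^{\gamma\nu'}$ — yields $\int_0^T z^2(t)\,dt\le C\bigl(\Theta^2+1+\Lambda^{2\gamma}\bigr)<\infty$ a.s., which is $(i)$. For $(ii)$ I verify Novikov's condition: the same bound gives $\tfrac12\int_0^T z^2\le C\bigl(1+\sup_s\abs{V_s}^{2\alpha}+\Lambda^{2\gamma}\bigr)$, and because $\alpha<1$ and $\gamma<1$ the exponents $2\alpha,2\gamma$ are strictly below $2$, so by Fernique's theorem (applied to the supremum and to the Hölder seminorm of the Gaussian process $V$) the exponential moments $\ex\exp(C\sup_s\abs{V_s}^{2\alpha})$ and $\ex\exp(C\Lambda^{2\gamma})$ are finite for every $C$. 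This gives $\ex\xi_T=1$ and completes existence; the strict inequalities $\alpha,\gamma<1$ are exactly where the sublinear-growth and Hölder hypotheses enter decisively.

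For \emph{weak uniqueness} (uniqueness in law) I would reverse the procedure. Given any weak solution $(V,X)$, set $M_t=X_t-x=\int_0^t K(t,s)\,dW_s+\int_0^t u(s,X_s)\,ds$ and apply Theorem~\ref{th1} with the adapted process $u(s,X_s)$: under $\pr_B$ with density $\xi_T$ the process $M$ equals $\int_0^t K(t,s)\,dB_s$, a Volterra--Gaussian process whose law is fixed. For any bounded functional $F$,
\[
\ex_\pr[F(M)] = \ex_{\pr_B}\bigl[F(M)\,\xi_T^{-1}\bigr],
\qquad
\xi_T^{-1}=\exp\Bigl\{\textstyle\int_0^T z\,dB-\tfrac12\int_0^T z^2\Bigr\},
\]
and $\xi_T^{-1}$ is a fixed measurable functional of the path $M$: indeed $z$ depends only on $X=x+M$, while $B$ is recovered from $M$ through the inverse representation underlying Lemma~\ref{lem6}. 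Thus $\ex_\pr[F(M)]$ is the same for every weak solution, so the law of the solution is unique.

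Part (ii) follows the identical pattern with $\widehat\K f=\int_0^\cdot \hat b\,D_{0+}^{\hat c}(\hat a f)$ and inverse $\hat z(s)=\hat a^{-1}(s)I_{0+}^{\hat h}(\hat b^{-1}u)(s)$. Since $I_{0+}^{\hat h}$ is an integral rather than a derivative, no differencing of $u$ is needed and only sublinear growth is used: directly $\abs{\hat z(s)}\le \hat a^{-1}(s)\int_0^s\abs{\hat h(s-r)}\,\abs{\hat b^{-1}(r)}\,\abs{u(r,X_r)}\,dr\le C\bigl(1+\sup_s\abs{V_s}^\alpha\bigr)$ by \eqref{eq:bounded}, whence $\int_0^T\hat z^2\le C(1+\sup_s\abs{V_s}^{2\alpha})$ and Novikov again follows from Fernique. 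I expect the main obstacle to be precisely the $L_2$ and exponential control of $z$ in part~(i): carrying out the Marchaud decomposition of $D_{0+}^h$ and matching its four resulting pieces exactly against the four conditions in \eqref{four}, together with the Fernique argument that exploits $\alpha,\gamma<1$. Part~(ii) is comparatively routine because only a fractional integral, not a derivative, appears.
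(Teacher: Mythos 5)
Your proposal is correct and follows essentially the same route as the paper: a Girsanov change of measure via Theorem~\ref{th1}, the Marchaud-type representation of $D_{0+}^h$ from Lemma~\ref{lem8}\ref{l1-iii} with the increment of $u b^{-1}$ split so that the four bounds in \eqref{four} handle the boundary term, the $b^{-1}$-increment, and the time- and space-H\"older pieces respectively, Fernique's theorem exploiting $2\alpha<2$ and $2\gamma<2$ for Novikov, and the change-of-measure functional identity for uniqueness in law. The only difference is presentational: the paper carries out the existence/uniqueness argument in detail for part (ii) and notes (i) is analogous, whereas you detail (i); the substance is the same.
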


\begin{remark}
Let us check the conditions \eqref{four} in the case when $V$ is a fractional Brownian motion.

$(i)$ Let $H>\frac12$, $a(s) = s^{\frac12-H}$, $b(s) = s^{H-\frac12}$, $c(s) = s^{H-\frac32}$, $h(s) = s^{\frac12-H}$. Then
\[
\int_0^T a^{-2}(s) h^2(s) b^{-2}(s)\,ds
=\int_0^T s^{1-2H}\,ds = (2-2H)^{-1} T^{2-2H};
\]
\begin{align*}
\MoveEqLeft
\int_0^T a^{-2}(t)\left(\int_0^t\abs{h'(t-r)} \abs{b^{-1}(t) - b^{-1}(r)}\,dr\right)^2 dt
\\
&= C\int_0^T t^{2H-1}\left(\int_0^t (t-r)^{-\frac12-H} \left(r^{\frac12-H} - t^{\frac12-H}\right)\,dr\right)^2 dt
\\
&= C\int_0^T t^{2H-1}\cdot t^{-1-2H}\cdot t^{1-2H} t^2\,dt
\left(\int_0^1 (1-r)^{-\frac12-H} \left(r^{\frac12-H} - 1\right)\,dr\right)^2
\\
&= C T^{2-2H}\left(\int_0^1 (1-r)^{-\frac12-H} \left(r^{\frac12-H} - 1\right)\,dr\right)^2.
\end{align*}
Integral
$\int_0^1 (1-r)^{-\frac12-H} \left(r^{\frac12-H} - 1\right)\,dr$
is finite, since around zero
\[
(1-r)^{-\frac12-H} \left(r^{\frac12-H} - 1\right) \sim r^{\frac12-H} - 1
\]
and around 1
\[
(1-r)^{-\frac12-H} \left(r^{\frac12-H} - 1\right) \sim (1-r)^{\frac12-H}.
\]
Further,
\begin{align*}
\MoveEqLeft
\int_0^T \left(a^{-2}b^{-2}\right)(t) \left(\int_0^t\abs{h'(t-r)}(t-r)^\beta\,dr\right)^2 dt
\\
&= C \int_0^T \left(\int_0^t (t-r)^{-\frac12-H+\beta}\,dr\right)^2 dt
\le C
\end{align*}
if $-\frac12-H+\beta > -1$, or $\beta > H-\frac12$.
Finally,
\begin{align*}
\MoveEqLeft
\int_0^T \left(a^{-2}b^{-2}\right)(t) \left(\int_0^t\abs{h'(t-r)}(t-r)^{\gamma\nu'}\,dr\right)^2 dt
\\
&= C \int_0^T \left(\int_0^t (t-r)^{-\frac12-H+\gamma\nu'}\,dr\right)^2 dt
\le C
\end{align*}
if $-\frac12-H+\gamma\nu'$ or $\gamma\nu' > H-\frac12$.
But in this case $\nu'$ can be any number from 0 to $H$, therefore, condition
$\gamma\nu' > H-\frac12$ holds if $\gamma H > H-\frac12$, or
$\gamma > 1-\frac{1}{2H}$.
Therefore assumptions \eqref{four} hold for $\beta > H-\frac12$, $\gamma > 1-\frac{1}{2H}$.

$(ii)$
Let $H<\frac12$. Then $\hat a(s) = Cs^{\frac12-H}$, $\hat b(s) = \hat c(s) = s^{H-\frac12}$, $\hat h(s) = s^{-\frac12-H}$, therefore
\[
\hat a^{-1}(s)\int_0^s \abs{\hat h(s-r)} \abs{\hat b^{-1}(r)}\,dr
= C s^{H-\frac12} \int_0^s (s-r)^{-\frac12-H} r^{\frac12-H}\,dr
= C s^{\frac12-H} \le C,
\]
so \eqref{eq:bounded} holds.
\end{remark}

\begin{proof}
First, we give some upper bounds for $z(s)$ and $\hat z(s)$ in order to confirm that theorem's conditions supply Novikov conditions for $\xi_T$ and $\widehat \xi_T$, and therefore $\xi_T$ and $\widehat \xi_T$ satisfy Theorem \ref{th1}.
Then the proofs of $(i)$ and $(ii)$ are similar, therefore we continue   only with   the second statement,
  dividing the proof  into several steps and  refer to the paper   \cite{NO} for additional detail.

Concerning $z(s)$, by Lemma \ref{lem8} \ref{l1-iii}, we have that
\begin{align*}
z(s)
&= \left( a^{-1} h b^{-1} \right)(s) u\left(s,Y_s+x\right)
\\
&\quad+ a^{-1}(s)\int_0^s \left(u\left(z,Y_z+x\right)b^{-1}(z)
- u\left(s,Y_s+x\right) b^{-1}(s)\right) h'(s-z)\,dz
\\
&= J_1(s) + J_2(s).
\end{align*}
Let us construct upper bounds for $J_1$ and $J_2$.
Namely, we are interested in two integrals.
First,
\begin{equation}\label{eq:ubJ_1}
\begin{split}
\int_0^T J_1^2(s)\,ds &\le C \left(1+\sup_{0\le s\le T} \abs{Y_s+x}^{2\alpha}\right) \int_0^T \left(a^{-2} h^2 b^{-2}\right)(s)\,ds
\\
&\le C\left(1+\sup_{0\le s \le T}\abs{Y_s}^{2\alpha}\right),
\end{split}
\end{equation}
according to 1st assumption in \eqref{four}.

Second,
\begin{align*}
\MoveEqLeft
\int_0^T J_2^2(s)\,ds
\le C \left(1+\sup_{0\le s\le T} \abs{Y_s+x}^{2\alpha}\right)
\\*
&\quad\quad\times \int_0^T a^{-2}(s) \left(\int_0^s\abs{b^{-1}(z) - b^{-1}(s)}
\abs{h'(s-z)}\,dz\right)^2 ds
\\
&+ \int_0^T \left(a^{-2}b^{-2}\right)(s) \left(\int_0^s \abs{u(s,Y_s+x) - u(z,Y_z+x)}
\abs{h'(s-z)}\,dz\right)^2 ds
\\
&= M_1 + M_2.
\end{align*}
Obviously,
\begin{equation}\label{eq:ubM_1}
M_1 \le C \left(1+\sup_{0\le s\le T}\abs{Y_s}^{2\alpha}\right),
\end{equation}
according to the 2nd assumption in \eqref{four}.
Concerning $M_2$, it admits the following upper bound:
\begin{align*}
M_2 &\le C \int_0^T (ab)^{-2}(s)\left(\int_0^s(s-z)^\beta\abs{h'(s-z)}\,dz\right)^2 ds
\\
&\quad+ C \int_0^T (ab)^{-2}(s)\left(\int_0^s\abs{Y_s-Y_z}^\gamma \abs{h'(s-z)}\,dz\right)^2 ds
\\
&= N_1 + N_2.
\end{align*}
According to 3rd assumption in \eqref{four}, $N_1\le C$.
Further, due to the 4th assumption from \eqref{four},
\begin{align*}
N_2 &\le C\left(\sup_{0\le s< t\le T} \frac{\abs{Y_s-Y_t}^\gamma}{(t-s)^{\gamma \nu'}}\right)^2 \int_0^T (ab)^{-2}(s)
\left(\int_0^s (s-z)^{\gamma\nu'}\abs{h'(s-z)}\,dz\right)^2ds
\\
&\le C\left(\sup_{0\le s< t\le T} \frac{\abs{Y_s-Y_t}}{(t-s)^{\nu'}}\right)^{2\gamma} \eqqcolon C G,
\end{align*}
and due to the fact that $2\gamma<1$ and to \cite{Fernique},
$\ex\exp CG<\infty$ for any $G>0$.
Combining this with \eqref{eq:ubJ_1} and \eqref{eq:ubM_1},
we conclude that
$\ex\exp\set{\frac12\int_0^T z^2_s\,ds}<\infty$,
and Novikov condition holds, consequently, $\widehat\xi_T$ is indeed a density function.

Concerning $\hat z(s)$, let us provide the following calculations:
\[
\hat z(s) = \hat a^{-1}(s) \int_0^s \hat h(s-r)\, \hat b^{-1}(r)
\,u\left(r,\widehat Y_r +x\right)\,dr
\]
and, according to \eqref{eq:lin-grow},
\begin{equation}\label{growth}\begin{gathered}
\abs{\hat z(s)}^2  \le C \hat a^{-2}(s) \left(\int_0^s \abs{\hat h(s-r)}\,
\abs{\hat b^{-1}(r)}\left(1+ \bigl\lvert\widehat Y_r+x\bigr\rvert^\alpha\right)\,dr\right)^2
\\
 \le C\left(1+ \sup_{r\in[0,T]}\bigl\lvert\widehat Y_r\bigr\rvert^{2\alpha}\right) \hat a^{-2}(s) \left(\int_0^s \abs{\hat h(s-r)}\, \abs{\hat b^{-1}(r)}\,dr\right)^2.
\end{gathered}\end{equation}
Under assumption \eqref{eq:bounded}
\[
\abs{\hat z(s)}^2 \le C\left(1+ \sup_{r\in[0,T]}\bigl\lvert\widehat Y_r\bigr\rvert^{2\alpha}\right).
\]
Then it follows from the fact that $2\alpha<2$ and integrability of supremum of Gaussian process \cite{Fernique}
that
$\sup_{0\le s \le T} \ex\exp\set{\rho\sup_{0\le s \le T}\abs{\hat z(s)}^2}<\infty$
for any $\rho>0$, and this inequality supplies Novikov condition for $\widehat\xi_T $.

Now we continue with the proof of $(ii)$.
We consider the two cases of $V$.

\textbf{(a)}
Together with Theorem~\ref{th1}, we can conclude  that $\widetilde Y$ is a Volterra--Gaussian process of the form $\widetilde Y_t = \int_0^t \widehat K (t,s)\,d\widetilde B_s$,
where $\widetilde B$ is a Wiener process with respect to the probability measure
$\pr_{\widetilde B}$ defined by
$d\pr_{\widetilde B} / d\pr = \widehat\xi_T$,
where
\[
\widehat\xi_T = \exp\set{\int_0^T\hat z(s)\,dW_s
- \frac12\int_0^T\hat z^2(s)\,ds}.
\]
It means that the couple $(\widetilde Y, \widehat Y+x)$ creates a weak solution of \eqref{eq:sde-gvp} with $V=\widehat Y.$

\textbf{(b)}
Now let us apply and modify the  approach from \cite{NO} concerning the proof of uniqueness in law and pathwise uniqueness of the equations with additive fractional noise.
Namely, consider any solution of the equation
\[
X_t = x + \int_0^t u(s,X_s)\,ds + \widehat Y_t,
\]
where
$\widehat Y_t = \int_0^t \widehat K(t,s)\,dB_s$,
$B$ is some Wiener process, and define
\[
\hat z(s) = \hat a^{-1}(s) \int_0^s h(s-r) \hat b^{-1}(r) u(r,X_r)\,dr.
\]
Note that $X\in \mathbb{C}([0,T])$, therefore, due to sublinear growth condition,
\[
\sup_{0\le v\le r}\abs{u(v,X_v)}
\le C\left (1+\sup_{0\le v\le r}\abs{X_v}^{2\alpha}\right ) < \infty
\quad\text{a.\,s.}
\]
Also,
$\sup_{0\le t\le T}\bigl\lvert\widehat Y_t\bigr\rvert<\infty$ a.\,s.
Therefore, from Gronwall inequality we get that
\[
\sup_{0\le t\le T}\abs{X_t} \le \left(\abs{x} + \sup_{0\le t\le T}\bigl\lvert\widehat Y_t\bigr\rvert + CT \right) e^{CT},
\]
and in turn it implies that, similarly to \eqref{growth}, under assumption \eqref{eq:bounded}, for any $s\in[0,T]$
\[
\abs{\hat z(s)}^2 \le C\left(1+\sup_{0\le t\le T}\abs{X_t}^{2\alpha}\right)
\le C_1 \left(1 + \sup_{0\le t\le T}\bigl\lvert\widehat Y_t\bigr\rvert^{2\alpha}\right).
\]
It means that  w.\,r.\,t.\ the measure $\widehat{\pr}$ such that
\begin{equation}\label{problike}
\frac{d\widehat{\pr}_T}{d\pr_T} = \exp\set{-\int_0^T \hat z(s)dB_s
-\frac12 \int_0^T \hat z^2(s)\,ds},
\end{equation} $X_t-x$ has the same distribution as the process $\int_0^t \hat K(t,s)\,dV_s$, where $V$ is a Wiener process,
 $V_s=B_s+\int_0^s\hat{z}(u)du,$
and the right-hand side of \eqref{problike} indeed defines a probability measure.

Further, for any bounded measurable functional $\Phi$ on $\mathbb{C}([0,T])$,
\begin{equation}\label{funcsl}
\begin{split}
\MoveEqLeft
\ex_\pr\Phi(X-x)
= \int_\Omega \Phi (\xi-x) \frac{d\pr_T}{d\widehat\pr_T}(\xi)\,d\widehat\pr_T
\\
&= \ex_{\widehat\pr}\left(\Phi(X-x)\exp\set{\int_0^T\hat z(s)\,dB_s
+\frac12\int_0^T\hat z^2(s)\,ds}\right)
\\
&= \ex_{\widehat\pr}\left(\Phi(X-x)\exp\set{\int_0^T\hat a^{-1}(s)
\int_0^s h(s-r)\hat b^{-1}(r) u(r,X_r)\,dr\,dB_s
\right.\right.
\\
&\quad+\left.\left.\frac12\int_0^T\left(\hat a^{-1}(s)
\int_0^s h(s-r)\hat b^{-1}(r) u(r,X_r)\,dr\right)^2 ds}\right)
\\
&= \ex_{\widehat\pr}\left(\Phi(X-x)\exp\set{\int_0^T\hat a^{-1}(s)
\int_0^s h(s-r)\hat b^{-1}(r) u(r,X_r)\,dr\,dV_s
\right.\right.
\\
&\quad-\left.\left.\frac12\int_0^T\left(\hat a^{-1}(s)
\int_0^s h(s-r)\hat b^{-1}(r) u(r,X_r)\,dr\right)^2ds}\right)
\\
&= \ex_{\pr}\Phi\left(\int_0^\cdot \widehat K(\cdot,s)\,dB_s\right)
\\
&\quad\times
\exp\set{\int_0^T\hat a^{-1}(s)
\int_0^s h(s-r)\hat b^{-1}(r)\right.
 u\left(r,x + \int_0^T\widehat K(r,z)\,dB_z\right)\,dr\,dB_s
\\
&\quad-\left.\frac12\int_0^T\left(\hat a^{-1}(s)
\int_0^s h(s-r)\hat b^{-1}(r) u\left(r,x + \int_0^T\widehat K(r,z)\,dB_z\right)\,dr\right)^2 ds}
\\
&= \ex_{\pr}\Phi\left(\int_0^\cdot \widehat K(\cdot,s)\,dV_s\right).
\end{split}\end{equation}
Taking \eqref{funcsl} into account, we conclude that any two weak solutions have the same distribution, so we established weak uniqueness.
\end{proof}

\subsection{Pathwise uniqueness of two weak solutions}
 Now we consider only  equation
 \begin{equation}\label{eq:sde-gvp_Y}
X_t = x + Y_t + \int_0^t u(s,X_s)\,ds,\quad t\in[0,T],
\end{equation}
where $x\in\R$, $u\colon[0,T]\times\R\to \R$
is a measurable function,   $Y$ is defined by \eqref{eq:gfbm} and \eqref{eq:kernK}.
\begin{theorem}\label{th3}
Let coefficients $a$, $b$, $c$ satisfy assumptions \ref{(C1)}--\ref{(C3)} and \ref{(C5)}.
Let also coefficient $u(s,x)$ satisfy conditions of item $(i)$, Theorem \ref{th2}.
Then any two weak solutions of equation \eqref{eq:sde-gvp_Y} with the same Wiener process $W$ participating in the representation of $Y$, coincide a.\,s.
\end{theorem}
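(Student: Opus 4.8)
The plan is to follow and extend the strategy of \cite{NO} for additive fractional noise, passing to the Wiener coordinates of the problem through the Girsanov machinery already developed. Let $X^1$ and $X^2$ be two weak solutions of \eqref{eq:sde-gvp_Y} on the same space driven by the same Wiener process $W$, so that the Volterra--Gaussian process $Y_t=\int_0^t K(t,s)\,dW_s$ is common to both. By the sublinear growth of $u$ and a Gronwall argument as in the proof of Lemma~\ref{l:ex-un_determ}, each $X^i$ is a.\,s.\ bounded on $[0,T]$; since then $\int_0^\cdot u(s,X^i_s)\,ds$ is Lipschitz in time and, by \ref{(C5)} and Proposition~\ref{prop:holder}, $Y$ is H\"older continuous up to the sharp order $\mu$ on intervals separated from zero, both $X^1$ and $X^2$ inherit this H\"older regularity. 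Writing $\Delta_t=X^1_t-X^2_t$, the common noise cancels, so that $\Delta_0=0$ and
\begin{equation*}
\Delta_t=\int_0^t\bigl(u(s,X^1_s)-u(s,X^2_s)\bigr)\,ds.
\end{equation*}

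The difficulty is that $u$ is only $\gamma$-H\"older in space with $\gamma<1$, so the direct bound $\abs{\Delta_t}\le C\int_0^t\abs{\Delta_s}^\gamma\,ds$ gives only a Peano-type inequality, which does \emph{not} force $\Delta\equiv0$; the regularity of the noise has to be used. To bring it in, for each solution I would set
\begin{equation*}
z^i(s)=\bigl(a^{-1}D_{0+}^h\bigl(u(\cdot,X^i_\cdot)\,b^{-1}\bigr)\bigr)(s),
\end{equation*}
so that, by Theorem~\ref{th1} and Lemma~\ref{lem6}, $X^i_t-x=\int_0^t K(t,s)\,dB^i_s$ with $B^i_t=W_t+\int_0^t z^i(s)\,ds$, and decompose $z^i$ exactly as in the proof of Theorem~\ref{th2} into a pointwise term $(a^{-1}hb^{-1})(s)\,u(s,X^i_s)$ and a fractional-derivative integral term. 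Subtracting, $z^1-z^2$ is then expressed solely through the increments $u(\cdot,X^1_\cdot)-u(\cdot,X^2_\cdot)$.

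The heart of the argument is to estimate $z^1-z^2$ in $L_2([0,T])$ and to turn this into a closed Gronwall inequality for $\Delta$. Here the integrability conditions \eqref{four} enter precisely as in the verification of the Novikov condition in Theorem~\ref{th2}, while the sharp H\"older exponent $\mu$ supplied by \ref{(C5)} is what allows the $\gamma$-H\"older dependence of the drift increments to be upgraded to an effective linear control: averaging the increment $u(s,X^1_s)-u(s,X^2_s)$ along the fluctuating path $Y$---that is, exploiting the regularity of the occupation measure of $Y$ guaranteed by $\mu$ being close to $H$---should produce a bound involving $\sup_{r\le s}\abs{\Delta_r}$ rather than $\sup_{r\le s}\abs{\Delta_r}^\gamma$. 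I would carry this out after localizing by the stopping times $\tau_N=\inf\set{t\in[0,T]:\abs{X^1_t}\vee\abs{X^2_t}>N}\wedge T$ and restricting to the event on which the relevant H\"older seminorm of $Y$ is bounded by $N$, on which all constants become deterministic. This should yield, for $t\le\tau_N$,
\begin{equation*}
\sup_{r\le t}\abs{\Delta_r}\le C_N\int_0^t\sup_{r\le s}\abs{\Delta_r}\,ds,
\end{equation*}
whence $\Delta\equiv0$ on $[0,\tau_N]$ by Gronwall; letting $N\to\infty$ then gives $X^1=X^2$ a.\,s.

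I expect the main obstacle to be exactly this linearization step---converting the $\gamma$-H\"older dependence of $u$ into a Lipschitz-type dependence on $\Delta$. This is where the strengthened hypothesis \ref{(C5)}, rather than the weaker \ref{(C4)} that sufficed for weak uniqueness in Theorem~\ref{th2}, becomes indispensable: it is the sharp H\"older regularity of the Gaussian noise that provides the regularization dissolving the Peano non-uniqueness of the underlying ODE. The remaining ingredients---the two bounds on the pieces of $z^1-z^2$ through \eqref{four}, and the uniform control of the H\"older seminorm of $Y$ via Fernique's theorem \cite{Fernique}---are routine adaptations of the estimates already used in the proof of Theorem~\ref{th2} and in \cite{NO}.
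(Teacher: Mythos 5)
Your argument takes a different route from the paper's and, as you yourself acknowledge, leaves the decisive step unproved. The paper does not attempt any Gronwall-type linearization at all. Instead it uses the max/min trick from \cite{NO}: on an interval $[\eps,T]$ separated from zero, condition \ref{(C5)} and Proposition~\ref{prop:holder} give H\"older regularity of $Y$ (hence of $X^1$ and $X^2$) up to some order $\mu>\frac12$, so the quadratic variation of $X^1-X^2$ vanishes and the It\^o--Tanaka formula applied to $\max\left(X_t^1,X_t^2\right)=X_t^1+\left(X_t^2-X_t^1\right)_+$ carries no local-time correction. One then checks directly, after letting $\eps\to0$, that
\[
\max\left(X_t^1,X_t^2\right) = x + Y_t + \int_0^t u\left(s,\max\left(X_s^1,X_s^2\right)\right)ds,
\]
i.\,e.\ $\max\left(X^1,X^2\right)$ (and likewise $\min\left(X^1,X^2\right)$) is again a weak solution of \eqref{eq:sde-gvp_Y}. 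Weak uniqueness (Theorem~\ref{th2}) then forces $\max\left(X^1,X^2\right)$ and $\min\left(X^1,X^2\right)$ to have the same distribution, and since $\max\ge\min$ pointwise this gives $\max=\min$, hence $X^1=X^2$ a.\,s. Thus the only role of \ref{(C5)} is to push the H\"older exponent above $\frac12$ so that the local time of $X^1-X^2$ is zero; no occupation-measure or averaging estimate is needed.

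The gap in your proposal is precisely the ``linearization step'' you flag: you assert that averaging the increment $u(s,X^1_s)-u(s,X^2_s)$ along the path of $Y$ should upgrade the $\gamma$-H\"older bound $\abs{\Delta_s}^\gamma$ to a linear bound $\sup_{r\le s}\abs{\Delta_r}$, but you give no mechanism for this. Such regularization-by-noise arguments require quantitative local-time or occupation-density estimates for the driving Gaussian process, none of which are available from \ref{(C1)}--\ref{(C5)} and none of which are established in the paper. Moreover, the two solutions are driven by the \emph{same} realization of $Y$, so after subtraction the noise disappears from the equation for $\Delta$ and cannot be reintroduced through the Girsanov densities $z^1-z^2$ without circularity: controlling $z^1-z^2$ in $L_2$ again requires controlling $\abs{u(s,X^1_s)-u(s,X^2_s)}\le C\abs{\Delta_s}^\gamma$, which returns you to the Peano-type inequality you correctly identify as insufficient. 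As written, the argument does not close; the missing idea is the reduction of pathwise uniqueness to weak uniqueness via the max/min construction.
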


\begin{proof}
According to Proposition~\ref{prop:holder}, the condition \ref{(C5)} supplies that the process $Y$ on any interval $[t_1+t_2,T]$ has a modification that satisfies
H\"older condition up to order
$\mu = \frac32 - \frac{1}{q_1} - \max\left(\frac12,\frac1p+\frac{1}{r_1},\frac{1}{p_1}+\frac{1}{r}\right)>\frac12$.
So, consider any $0 < \eps < T$, and on the interval $[\eps,T]$ apply It\^o formula to the process $\max\left(X^1_t,X^2_t\right)$, where $X^1$ and $X^2$ are two weak solutions with the same Wiener process $W$.
Observing that $X^1$ and $X^2$ are H\"older up to order $\mu>\frac12$ on $[\eps, T]$, which implies that the quadratic variation of $X^1-X^2$ is zero, we get that for any $t\in[\eps,T]$
\begin{align*}
\MoveEqLeft
\max\left(X_t^1,X_t^2\right) - \max\left(X_\eps^1,X_\eps^2\right)
= X_t^1 - X_\eps^1 +\left(X_t^2-X_t^1\right)_+ - \left(X_\eps^2-X_\eps^1\right)_+
\\
&= Y_t - Y_\eps + \int_\eps^t u\left (s,X_s^1\right )\,ds
+ \int_\eps^t \left(u\left (s,X_s^2\right ) - u\left (s,X_s^1\right )\right) \ind\set{X_s^2>X_s^1}\,ds
\\
&= Y_t - Y_\eps + \int_\eps^t u\left(s,\max\left(X_s^1,X_s^2\right)\right)\,ds.
\end{align*}

Let $\eps\to0$.
Then it follows from continuity of $Y$ and $u$ that $Y_\eps\to0$ a.\,s., and
\[
\int_\eps^t u\left(s,\max\left(X_s^1,X_s^2\right)\right)\,ds
\to \int_0^t u\left(s,\max\left(X_s^1,X_s^2\right)\right)\,ds
\quad\text{a.\,s.}
\]
Moreover,
$\max\left(X_\eps^1,X_\eps^2\right) \to x$ a.\,s.

Finally,
\[
\max\left(X_t^1,X_t^2\right) = x + Y_t + \int_0^t u \left(s,\max\left(X_s^1,X_s^2\right)\right)\,ds.
\]
It means that $\max\left(X_t^1,X_t^2\right)$ (and similarly
$\min\left(X_s^1,X_s^2\right)$) satisfies equation \eqref{eq:sde-gvp_Y}.
Due to the weak uniqueness proved in Theorem~\ref{th2},
$\max\left(X_t^1,X_t^2\right)$ and $\min\left(X_s^1,X_s^2\right)$
have the same distribution, whence $X_t^1 = X_t^2$ a.\,s., and from continuity of $X^1$ and $X^2$, $X_t^1 = X_t^2$, $t\in[0,T]$, a.\,s.
\end{proof}

\begin{remark}
1. Condition \ref{(C5)} is fulfilled in the case when $Y=B^H$ with $H>\frac12$.
In this case we can put
$p_1=q_1=r_1=\frac3\eps$,
where $0<\eps<\min\set{(H-\frac12),3(1-H),\frac12}$,
$\frac1p=H=\frac12+\frac\eps3$, $\frac1q=\frac\eps3$,
$\frac1r=\frac32-H+\frac\eps3$.
Then
\[
\mu = \frac32 - \frac\eps3 - \max\set{\frac12, \frac32-H+\frac{2\eps}{3}, H-\frac12+\frac{2\eps}{3}}
=H-\eps>\frac12.
\]

2. In the case when we cannot guarantee that $Y$ is H\"older up to some order $\mu>\frac12$ (for example, in the case when $Y=B^H$ with $H<\frac12$)
formula It\^o for $\max\left(X_t^1,X_t^2\right)$ has another form,
and the statement like Theorem \ref{th2} is an open problem.
\end{remark}

\subsection{Existence and uniqueness of strong solutions}

We conclude with a straightforward consequence of Theorems \ref{th2} and \ref{th3}.
\begin{theorem}
Under the assumptions of Theorem \ref{th3}, equation \eqref{eq:sde-gvp_Y} has a unique strong solution.
\end{theorem}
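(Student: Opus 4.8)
The plan is to derive the statement from the Yamada--Watanabe principle, according to which \emph{weak existence together with pathwise uniqueness yields the existence of a unique strong solution}. Both ingredients are already available. The hypotheses of Theorem~\ref{th3} include those of item~$(i)$ of Theorem~\ref{th2}: the drift $u$ obeys the sublinear growth and H\"older conditions, and $a,b,h$ satisfy \eqref{four}. Consequently \eqref{eq:sde-gvp_Y} admits a weak solution $(Y,X)$ (constructed in Theorem~\ref{th2} through the Girsanov change of measure of Theorem~\ref{th1}). At the same time, Theorem~\ref{th3} supplies pathwise uniqueness: any two weak solutions built on the \emph{same} Wiener process $W$ entering the representation $Y_t=\int_0^t K(t,s)\,dW_s$ coincide almost surely.

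First I would record weak existence precisely as furnished by Theorem~\ref{th2}~$(i)$, noting that the resulting solution has continuous paths and is a measurable functional of the Gaussian noise. Next I would quote the pathwise uniqueness of Theorem~\ref{th3} verbatim. The concluding step is the abstract implication: weak existence plus pathwise uniqueness produce a measurable solution map $F$, adapted to the filtration of the driving noise, with $X=F(W)$ solving \eqref{eq:sde-gvp_Y}. Since by Lemma~\ref{lem6} the operator $\K$ is boundedly invertible, $\FF^Y=\FF^W$, so $F(W)$ is $\FF^Y$-adapted and is therefore a strong solution in the sense of the definition above; its uniqueness is immediate from pathwise uniqueness.

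The main obstacle is that $Y$ is in general \emph{not} a semimartingale (it specialises to fractional Brownian motion with $H>\tfrac12$), so the classical It\^o--diffusion version of the Yamada--Watanabe theorem does not apply directly. I would instead invoke its measure-theoretic formulation, in the spirit of Kurtz's general Yamada--Watanabe theorem, which only requires that solutions be adapted functionals of the driving noise on a suitable path space; this is legitimate here because $K$ is a deterministic Volterra kernel and $\FF^Y=\FF^W$. Alternatively, following the construction carried out for fractional Brownian motion in \cite{NO}, I would build the strong solution explicitly as an adapted functional of $W$ and transport it through the Girsanov transformation of Theorem~\ref{th1}; the H\"older regularity of order $\mu>\tfrac12$ guaranteed by condition~\ref{(C5)} via Proposition~\ref{prop:holder} is exactly what legitimises the It\^o-type argument underlying pathwise uniqueness, and hence the whole scheme.
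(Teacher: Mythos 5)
Your proposal is correct and follows essentially the same route as the paper, which gives no separate argument at all and simply declares the theorem a straightforward consequence of Theorems~\ref{th2} (weak existence and uniqueness in law) and~\ref{th3} (pathwise uniqueness), i.e.\ exactly the Yamada--Watanabe principle you invoke. Your additional remarks --- that $Y$ is not a semimartingale so one needs a general (Kurtz-type) formulation of Yamada--Watanabe, and that $\FF^Y=\FF^W$ via the invertibility of $\K$ is what makes the adapted functional of $W$ a strong solution in the paper's sense --- supply detail the paper leaves implicit, but do not change the approach.
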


\section*{Appendix. Elements of fractional calculus for Sonine pairs}

Here we consider some notions similar to the notions of the fractional integral and of the fractional derivative proper to classical fractional calculus.
\begin{definition}\label{def:fracintfracder}
Let functions $c$ and $h$ from $L_1([0,T])$ create a Sonine pair.
Introduce the operators, similar to operators of fractional integral and fractional derivative:
\begin{gather*}
\left(I_{0+}^c f\right)(t) = \int_0^t c(t-s) f(s)\,ds,
\quad f\in L_1([0,T]),
\\
\left(D_{0+}^h f\right)(t) = \frac{d}{ds} \left(\int_0^t h(t-s) f(s) \, ds \right),
\end{gather*}
where $f\colon [0,T] \to \R$ is such that
\[
\int_0^t h(t-s) f(s) \,ds \in AC([0,T]).
\]
\end{definition}

Now we can here establish some properties of the operators
$I_{0+}^c$ and $D_{0+}^h$.
Denote
\[
I_{0+}^c \bigl(L_1([0,T])\bigr) = \set{\psi\colon[0,T]\to\R :
\psi(t) = \left(I_{0+}^c\varphi\right)(t),\, \varphi\in L_1([0,T])}.
\]

\begin{lemma}\label{lem8}
\begin{enumerate}[label=\textit{(\roman*)}]
\item\label{l1-i}
Let $f\in L_1([0,T])$. Then
\[
\left(D_{0+}^h I_{0+}^c f\right)(t) = f(t)
\quad\text{a.\,e.}
\]

\item\label{l1-ii}
Let $f\in I_{0+}^c \bigl(L_1([0,T])\bigr)$.
Then
\[
\left(I_{0+}^c D_{0+}^h f\right)(t) = f(t),
\quad t\in[0,T].
\]

\item\label{l1-iii}
Let $h\in C^1(0,T)$, there exist $\beta>0$ such that
$\lim_{s\to0} s^{\beta+1}h'(s) < \infty$.
Also, let $f$ be a H\"older function of order $\gamma$, and $\gamma>\beta$.
Then for any $t\in[0,T]$,
\[
\left(D_{0+}^h f\right)(t) = h(t)f(t) +
\int_0^t [f(z)-f(t)] h'(t-z)\,dz.
\]
\end{enumerate}
\end{lemma}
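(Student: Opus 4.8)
The plan is to prove the three items in the order \ref{l1-i}, \ref{l1-ii}, \ref{l1-iii}, with \ref{l1-ii} an immediate corollary of \ref{l1-i}, and \ref{l1-iii} the genuinely technical part. The guiding observation for the first two is that both $I_{0+}^c$ and $D_{0+}^h$ are built from convolutions on $[0,T]$, and that the Sonine identity says precisely that $c$ and $h$ convolve to the constant $1$. For item \ref{l1-i} I would first write, for $f\in L_1([0,T])$,
\[
\int_0^t h(t-u)\left(I_{0+}^c f\right)(u)\,du = \int_0^t\!\!\int_0^u h(t-u)\,c(u-r)\,f(r)\,dr\,du .
\]
Since $h,c,f\in L_1$, this triple integral is absolutely convergent for a.e.\ $t$ (it is the value at $t$ of the $L_1$ convolution $\abs{h}*\abs{c}*\abs{f}$), so Fubini's theorem lets me swap the order of integration; after the substitution $v=u-r$ the right-hand side becomes $\int_0^t f(r)\bigl(\int_0^{t-r} c(v)\,h(t-r-v)\,dv\bigr)\,dr$. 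By the Sonine property together with commutativity of convolution the inner integral equals $1$ for every $r<t$, so the whole expression reduces to $\int_0^t f(r)\,dr$, which is absolutely continuous. Differentiating and invoking the Lebesgue differentiation theorem gives $(D_{0+}^h I_{0+}^c f)(t)=f(t)$ a.e.

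For item \ref{l1-ii}, I would write $f=I_{0+}^c\varphi$ with $\varphi\in L_1([0,T])$. The computation just made shows $\int_0^t h(t-s)f(s)\,ds=\int_0^t\varphi(r)\,dr\in AC([0,T])$, so $D_{0+}^h f$ is well defined and, by item \ref{l1-i}, equals $\varphi$ a.e. Applying $I_{0+}^c$ and using that the convolution operator $I_{0+}^c$ is insensitive to modifications on null sets, I obtain $I_{0+}^c D_{0+}^h f=I_{0+}^c\varphi=f$ for every $t\in[0,T]$, which is the pointwise statement asserted.

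For item \ref{l1-iii} the difficulty is that $h$ and $h'$ are singular at the origin while $f$ is merely H\"older, so one cannot naively differentiate under the integral. Set $F(t)=\int_0^t h(t-s)f(s)\,ds$ and truncate the singularity: for $\eta\in(0,t)$ put $F_\eta(t)=\int_0^{t-\eta}h(t-s)f(s)\,ds$. On this range $t-s\ge\eta$ and $h$ is $C^1$, so I may differentiate, and, crucially, since $t$ enters only through $h(t-s)$ and through the upper limit (never inside $f$), no derivative of $f$ appears:
\[
F_\eta'(t)=h(\eta)f(t-\eta)+\int_0^{t-\eta} h'(t-s)f(s)\,ds .
\]
Adding and subtracting $f(t)$ and using $\int_0^{t-\eta}h'(t-s)\,ds=h(t)-h(\eta)$ recasts this as
\[
F_\eta'(t)=h(\eta)\bigl(f(t-\eta)-f(t)\bigr)+f(t)h(t)+\int_0^{t-\eta}h'(t-s)\bigl(f(s)-f(t)\bigr)\,ds .
\]
Then I let $\eta\to0$. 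The assumption $\lim_{s\to0}s^{\beta+1}h'(s)<\infty$ yields $\abs{h'(s)}\le Cs^{-\beta-1}$ and hence $\abs{h(\eta)}\le C\eta^{-\beta}$; together with $\abs{f(t-\eta)-f(t)}\le C\eta^\gamma$ and $\gamma>\beta$, the boundary term is $O(\eta^{\gamma-\beta})\to0$. The same two bounds give $\abs{h'(t-s)}\abs{f(s)-f(t)}\le C(t-s)^{\gamma-\beta-1}$, integrable near $s=t$, so the integral converges to $\int_0^t h'(t-s)\bigl(f(s)-f(t)\bigr)\,ds$; these estimates are uniform in $t$ on each $[\delta,T]$, so $F_\eta'\to\psi$ uniformly there, where $\psi$ is the claimed right-hand side. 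Since also $F_\eta\to F$ pointwise (because $\abs{F(t)-F_\eta(t)}\le\norm{f}_\infty\int_0^\eta\abs{h}\to0$), the standard theorem on differentiating uniform limits shows $F$ is differentiable with $F'=\psi$; as $\psi$ is continuous this holds at every $t\in(0,T]$.

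The main obstacle is item \ref{l1-iii}: the correct bookkeeping so that the nonexistent $f'$ never enters (achieved by truncating at $u=t-\eta$ rather than changing variables in a way that puts $t$ inside $f$), and then the justification of the passage $\eta\to0$ uniformly. The interplay $\gamma>\beta$ between the H\"older order of $f$ and the singularity order of $h'$ is exactly what makes both the boundary term vanish and the limiting integral converge.
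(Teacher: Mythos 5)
Your items \emph{(i)} and \emph{(ii)} follow the paper's argument essentially verbatim: Fubini plus the Sonine identity collapses $h*(c*f)$ to $\int_0^t f$, and \emph{(ii)} is the formal corollary obtained by writing $f=I_{0+}^c\varphi$. For item \emph{(iii)}, however, you take a genuinely different route. The paper works directly with the difference quotient of $F(t)=\int_0^t h(t-s)f(s)\,ds$: it splits $F(t+\Delta t)-F(t)$ into three pieces (the increment of $h$ against $f(s)-f(t)$ over $[0,t]$, the boundary contribution over $[t,t+\Delta t]$ handled via the integral mean value theorem with an intermediate point $\theta_t$, and the term $f(t)\int_t^{t+\Delta t}h$), and then controls the singular part of the first piece by an $\eps$-splitting of the domain. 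You instead truncate the singularity, differentiate $F_\eta(t)=\int_0^{t-\eta}h(t-s)f(s)\,ds$ classically, and pass to the limit $\eta\to0$ by uniform convergence of the derivatives. Both arguments hinge on the same mechanism — pairing $h'$ with the increment $f(s)-f(t)$ so that $\gamma>\beta$ makes the singular kernel integrable and kills the boundary term $O(\eta^{\gamma-\beta})$ — but your version avoids the mean value theorem bookkeeping and the two-parameter limit ($\Delta t\to0$ then $\eps\to0$) of the paper, at the cost of invoking the theorem on differentiating uniform limits. Your proof is correct; the only caveat, shared with the paper, is that the identity is really established for $t\in(0,T)$ rather than literally all of $[0,T]$, since $h$ may be singular at the origin.
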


\begin{proof}
\ref{l1-i}
Obviously,
\begin{align*}
\left(D_{0+}^h I_{0+}^c f\right)(t)
&= \frac{d}{dt}\left(\int_0^t h(t-s)\left(\int_0^s c(s-u) f(u)\,du\right)ds\right)
\\
&= \frac{d}{dt}\left(\int_0^t f(u)\left(\int_u^t h(t-s) c(s-u)\,ds\right)du\right)
\\
&= \frac{d}{dt}\left(\int_0^t f(u)\,du\right)
= f(t)
\quad\text{a.\,e.}
\end{align*}

\ref{l1-ii}
Let $f(t)=\left(I_{0+}^c\varphi\right)(t)$, $\varphi\in L_1([0,T])$.
Then, according to \ref{l1-i},
\[
\left(I_{0+}^c D_{0+}^h f\right)(t)
=\left(I_{0+}^c D_{0+}^h I_{0+}^c\varphi\right)(t)
=\left(I_{0+}^c \varphi\right)(t)
= f(t),
\quad t\in[0,T].
\]

\ref{l1-iii}
Consider any $t\in(0,T)$ and $\Delta t>0$ (other values can be considered similarly).
Then
\begin{align*}
\Delta_f &\coloneqq \int_0^{t+\Delta t} h(t + \Delta t - s) f(s)\,ds
- \int_0^t h(t-s) f(s)\,ds
\\
&=\int_0^{t}\bigl(h(t + \Delta t - s) - h(t-s)\bigr) f(s)\,ds
+ \int_t^{t+\Delta t} h(t + \Delta t - s) f(s)\,ds
\\
&=\int_0^{t}\bigl(h(t + \Delta t - s) - h(t-s)\bigr) \bigl(f(s)-f(t)\bigr)ds
\\
&\quad+ \int_t^{t+\Delta t} h(t + \Delta t - s)\bigl(f(s)-f(t)\bigr)\,ds
+ f(t)\int_t^{t+\Delta t} h(s)\,ds.
\end{align*}
Evidently,
\[
\frac{1}{\Delta t}\left(f(t)\int_t^{t+\Delta t} h(s)\,ds\right)
\to f(t)h(t), \text{ a.\,e., as } \Delta t\to0.
\]
Furthermore,
\[
\frac{1}{\Delta t}\abs{\int_t^{t+\Delta t} h(t + \Delta t - s)
[f(s)-f(t)]ds}
=\abs{h(t + \Delta t - \theta_t)}\abs{f(\theta_t)-f(t)},
\]
where $\theta_t\in[t,t+\Delta t]$.
According to condition \ref{l1-iii} and L'H\^{o}pital's rule, for some constant $C>0$
\[
\lim_{\Delta t \to0}\abs{h(t + \Delta t - \theta_t)}\abs{f(\theta_t)-f(t)}
\le C\lim_{\Delta t \to0} \Delta t^{\gamma-\beta}=0.
\]
Finally, for $0<\varepsilon<t$
\begin{align*}
\MoveEqLeft
\abs{\int_0^{t}\left(\frac{h(t + \Delta t - s) - h(t-s)}{\Delta t}
-h'(t-s)\right) \bigl(f(s)-f(t)\bigr)ds}
\\
&= \abs{\int_0^{t}\bigl(h'(\theta_t-s) - h'(t-s)\bigr) \bigl(f(s)-f(t)\bigr)ds}
\\
&\le \abs{\int_0^{t-\eps}\bigl(h'(\theta_t-s) - h'(t-s)\bigr) \bigl(f(s)-f(t)\bigr)ds}
\\
&+ \abs{\int_{t-\eps}^{t}\bigl(h'(\theta_t-s) - h'(t-s)\bigr) \bigl(f(s)-f(t)\bigr)ds}
\\
&\le \abs{\int_0^{t-\eps}\bigl(h'(\theta_t-s) - h'(t-s)\bigr) \bigl(f(s)-f(t)\bigr)ds}
\\
&+  \int_{t-\eps}^{t} |h'(\theta_t-s)|  |f(s)-f(t)|ds
+\int_{t-\eps}^{t}\abs{h'(t-s)} \abs{f(s)-f(t)}\,ds.
\end{align*}
The first term, $\abs{\int_0^{t-\eps}\bigl(h'(\theta_t-s) - h'(t-s)\bigr) \bigl(f(s)-f(t)\bigr)ds}$, tends to 0 as $\Delta t\to0$ for any $\eps>0$.
Concerning the second term, it can be bounded as follows. For sufficiently small $\varepsilon$, it follows from $(iii)$ that
\[
\int_{t-\eps}^{t}\abs{h'(\theta_t-s)} \abs{f(s)-f(t)}\,ds
\le C \int_{t-\eps}^{t} (t-s)^{-1-\beta} (t-s)^\alpha\,ds
= C\eps^{\alpha-\beta},
\]
and, second,  the same is true for
$\int_{t-\eps}^{t}\abs{h'(t-s)} \abs{f(s)-f(t)}\,ds$,
and the proof follows.
\end{proof}

\section*{Acknowledgment}

The authors acknowledge that the present research is carried through within the frame and support of the ToppForsk project nr. 274410 of the Research Council of Norway with title STORM: Stochastics for Time-Space Risk Models.

\bibliographystyle{abbrv}
\bibliography{biblio}

\end{document}